\theoremstyle{plain}
\newtheorem{theorem}{Theorem}[section]
\newtheorem{corollary}[theorem]{Corollary}
\newtheorem{lemma}[theorem]{Lemma}
\newtheorem{proposition}[theorem]{Proposition}
\theoremstyle{property}
\theoremstyle{definition}
\newtheorem{definition}[theorem]{Definition}
\newtheorem{remark}[theorem]{Remark}
\newtheorem{acknowledgement}[theorem]{Acknowledgement}
\numberwithin{equation}{section}
\begin{document}
\title[The CR Yamabe equation]{Positive mass theorem and the CR Yamabe
equation on 5-dimensional contact spin manifolds}
\author{Jih-Hsin Cheng}
\address{Institute of Mathematics, Academia Sinica and National Center for
Theoretical Sciences, Taipei, Taiwan, R.O.C.}
\email{cheng@math.sinica.edu.tw}
\author{Hung-Lin Chiu}
\address{Department of Mathematics, National Tsing-Hua University, Hsinchu,
Taiwan, R.O.C.}
\email{hlchiu@math.nthu.edu.tw}
\subjclass{32V05, 32V20}
\keywords{Contact manifold, spin structure, spherical CR structure, CR
Yamabe equation, CR-Sobolev quotient, CR Yamabe constant, p-mass, Heisenberg
group}
\thanks{}

\begin{abstract}
We consider the CR Yamabe equation with critical Sobolev exponent on a
closed contact manifold $M$ of dimension $2n+1.$ The problem of finding
solutions with minimum energy has been resolved for all dimensions except
dimension 5 ($n=2$). In this paper we prove the existence of minimum energy
solutions in the 5-dimensional case when $M$ is spin. The proof is based on
a positive mass theorem built up through a spinorial approach.
\end{abstract}

\maketitle


\section{Introduction and statement of the results}

On an odd dimensional manifold $M$, contact structure $\xi $ is a natural
geometric structure to consider. Moreover, a contact manifold $(M,\xi )$
arising as the boundary of a domain in $\mathbb{C}^{n+1}$ (or a complex
manifold) inherits a complex structure, called $CR$ (Cauchy-Riemann)
structure. The CR structure essentially reflects or controls the complex
structure of the inside domain. We can talk about abstract $CR$ structures
on a contact manifold $M$ (see the Appendix). We consider the following CR
Yamabe equation with critical Sobolev exponent (see (\ref{YE}) and notations
in the Appendix):%
\begin{equation}
(2+\frac{2}{n})\Delta _{b}u+Wu=u^{1+\frac{2}{n}}\text{ on }M.  \label{1-1}
\end{equation}%
\noindent Here $\Delta _{b}$ and $W$ denote the (negative) sublaplacian and
the Tanaka-Webster scalar curvature, respectively (see (\ref{SubL}) and (\ref%
{Wsc}) in the Appendix). There is a variational formulation for equation (%
\ref{1-1}). Namely the energy is provided by the following CR-Sobolev
quotient (see (\ref{YMJ}) in the Appendix): 
\begin{equation}
Q(v):=\frac{\int_{M}((2+\frac{2}{n})|\nabla _{b}v|^{2}+Wv^{2})dV_{\theta }}{%
(\int_{M}v^{2+2/n}dV_{\theta })^{n/(n+1)}}\text{ for }v>0\text{ smooth.}
\label{1-2}
\end{equation}%
\noindent The main goal of this paper is to find a solution $u$ to (\ref{1-1}%
) with minimum energy $Q(u)$ equal to%
\begin{equation}
\inf_{v>0,\text{ smooth}}Q(v)=:\mathcal{Y}(M,J)  \label{1-3}
\end{equation}%
\noindent on a closed (compact with no boundary) 5-dimensional contact and
spin manifold provided the underlying CR structure $J$ is spherical with the
CR Yamabe constant $\mathcal{Y}(M,J)$ $>$ $0$ (see Theorem \ref{YMP}). For
this CR Yamabe minimizer problem, i.e. finding a solution to (\ref{1-1})
(assuming $\mathcal{Y}(M,J)$ $>$ $0;$ the cases $\mathcal{Y}(M,J)$ $\leq $ $%
0 $ are easier$)$ with minimum energy, we have been able to resolve for all
dimensions except dimension 5. Let us give a brief history about this
problem below.

There has been a far-reaching analogy between conformal and CR geometries.
Along the approaches used in conforml geometry by H. Yamabe, N. Trudinger
and T. Aubin, in 1987 D. Jerison and J. Lee \cite{JL} showed the analogous
results in CR geometry. That is, the CR Yamabe constant $\mathcal{Y}(M,J)$
depends only on the CR structure $J$ of $M$ and $\mathcal{Y}(M,J)$ $\leq $ $%
\mathcal{Y}(S^{2n+1},\hat{J})$, where $(S^{2n+1},$ $\hat{J})$ is the
standard CR sphere with the induced CR structure $\hat{J}$ from $\mathbb{C}%
^{n+1}$. In addition, if $\mathcal{Y}(M,J)<\mathcal{Y}(S^{2n+1},\hat{J})$,
then $\mathcal{Y}(M,J)$ is attained for some positive $C^{\infty }$ function 
$u$ (by the compactness of solutions to a family of approximate equations),
hence the CR Yamabe minimizer problem for $(M,J)$ is solvable.

Recall that a CR manifold is called spherical if it is locally CR equivalent
to the standard CR sphere $(S^{2n+1},$ $\hat{J})$. In the case that
dimension $2n+1\geq 5$ and ($M,J)$ is not spherical, Jerison and Lee (\cite%
{JL1}) showed that $\mathcal{Y}(M,J)<\mathcal{Y}(S^{2n+1},\hat{J})$ by a
test function estimate. For the remaining cases: either (i) $\dim M=3$ or
(ii) $\dim M$ $\geq $ $5$ and $M$ is spherical, we need a positive mass
theorem to show that $\mathcal{Y}(M,J)<\mathcal{Y}(S^{2n+1},\hat{J})$ unless 
$M$ is CR equivalent to the standard CR sphere. When $\dim M=3$, this was
shown by Malchiodi, Yang and one of the authors (\cite{CMY}) (the condition
that the CR Paneitz operator of $M$ is nonnegative turns out to be
equivalent to the embeddability of $J$ \cite{Tak}$)$ \ Finally when $\dim M$ 
$\geq $ $5$ and $M$ is spherical, this was finished by Yang and the authors (%
\cite{CCY}) through showing that the developing map is injective. However in
the case $\dim M$ $=$ $5$, we need an extra condition on the growth rate of
the Green's function on $\tilde{M},$ the universal cover of $M$. So in the
case $\dim M$ $=$ $5$, the CR positive mass theorem is not really completed.
In this paper, we showed that for $\dim M$ $=$ $5,$ $M$ being spherical, if
in addition $M$ has a spin structure, then we have a CR positive mass
theorem, and hence the CR Yamabe minimizer problem is solvable (see Theorem %
\ref{YMP} below). For an asymptotically flat pseudohermitian manifold $%
(N,J,\theta )$ (see (\ref{af1})), we can talk about the p-mass $m(J,\theta )$
(see (\ref{af10})).

\begin{theorem}
\label{PMT} Suppose that $(N,J,\theta )$ is an asymptotically flat,
pseudohermitian and spin manifold of dimension 5. Assume that $J$ is
spherical and $(N,J,\theta )$ has the Tanaka-Webster scalar curvature $W\geq
0$. Then the p-mass $m(J,\theta )\geq 0$. Moreover, $m(J,\theta )=0$ if and
only if $(N,J,\theta )$ is isomorphic to the Heisenberg group $(H_{2},%
\mathring{J},\mathring{\theta})$.
\end{theorem}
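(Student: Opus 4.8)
The plan is to transcribe Witten's spinorial proof of the Riemannian positive mass theorem into the pseudohermitian category, the substitute for the Riemannian Dirac operator being a Kohn-Dirac type operator attached to the Tanaka-Webster connection. Fix the Webster metric $g_{\theta}$ determined by $\theta$ and $J$. Since $\dim N=5$ and $N$ is spin, there is a rank-$4$ spinor bundle $\mathbb{S}\to N$, and the compatible almost complex structure $J$ on the contact distribution $\xi$ induces a splitting $\mathbb{S}\cong\Lambda^{0,\bullet}\xi^{*}$ with a distinguished section spanning $\Lambda^{0,0}$. On $\mathbb{S}$ I would introduce the spinorial Tanaka-Webster connection $\nabla$ and a first order operator $D$ built from Clifford multiplication by $\theta$ and by horizontal covectors composed with $\nabla$, possibly corrected by a torsion term, and then establish a Lichnerowicz-Bochner-Weitzenb\"ock identity
\[
D^{2}=\nabla^{*}\nabla+\mathcal{R},
\]
with $\mathcal{R}$ a zeroth order curvature operator. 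The decisive input is that $J$ is spherical: the Cartan (pseudoconformal) curvature then vanishes, so the curvature terms in $\mathcal{R}$ collapse to $\tfrac14 W$ together with terms that are manifestly nonnegative after the torsion correction, giving $\langle\mathcal{R}\psi,\psi\rangle\ge 0$ pointwise once $W\ge 0$.

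Next I would solve a boundary value problem for $D$ on the asymptotically flat manifold $(N,J,\theta)$. Using the model structure on the end supplied by (\ref{af1}), choose a parallel spinor $\psi_{0}$ of the flat Heisenberg structure near infinity, cut it off, and seek $\psi=\psi_{0}+\varphi$ with $D\psi=0$ and $\varphi\to 0$ at infinity in the parabolic gauge adapted to the Heisenberg dilations. Integrating the Weitzenb\"ock identity against $\varphi$ shows that $D$ has trivial $L^{2}$ kernel; combined with the Fredholm theory of $D$ on the anisotropic weighted Sobolev spaces suited to the Heisenberg end, this produces a unique such $\psi$, whose decay is then upgraded by elliptic estimates.

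The heart of the argument is the integration step. Applying Green's formula to $D^{2}=\nabla^{*}\nabla+\mathcal{R}$ on $N\setminus B_{R}$ and letting $R\to\infty$ gives
\[
0=\int_{N}\big(|\nabla\psi|^{2}+\langle\mathcal{R}\psi,\psi\rangle\big)\,dV_{\theta}+\lim_{R\to\infty}\int_{\partial B_{R}}(\cdots),
\]
and one identifies the boundary integral with $c\,m(J,\theta)$ for an explicit positive constant $c$ by inserting the asymptotic expansion (\ref{af1}) and comparing with the definition (\ref{af10}) of the p-mass. Since the bulk integrand is nonnegative, $m(J,\theta)\ge 0$. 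For the rigidity assertion: if $m(J,\theta)=0$, then every asymptotically parallel spinor satisfies $\nabla\psi=0$ and $\mathcal{R}\psi=0$ on all of $N$; letting $\psi_{0}$ run over a full basis of the model parallel spinors yields a maximal family of parallel spinors on $N$, which forces the curvature and torsion of the Tanaka-Webster connection to vanish identically. Hence $(N,J,\theta)$ is pseudohermitically flat, and being asymptotic to the Heisenberg group it must be globally isomorphic to $(H_{2},\mathring{J},\mathring{\theta})$.

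I expect two places to resist a routine treatment. The first is the Weitzenb\"ock computation: the Tanaka-Webster torsion generically destroys the positivity of $\mathcal{R}$, so one must choose the torsion correction of $D$ and $\nabla$ carefully and then use sphericalness (vanishing Cartan curvature) to show that the remainder is a nonnegative operator; this is precisely the step that makes the $5$-dimensional spin case work. The second is the asymptotic analysis: because the natural dilations on the Heisenberg model are anisotropic, the decay hypotheses in (\ref{af1}), the solvability of $D\psi=0$, and the evaluation of the spinorial flux at infinity all have to be carried out in the parabolic (Folland-Stein) scaling rather than the Euclidean one, and matching that flux with $m(J,\theta)$ in this scaling requires care.
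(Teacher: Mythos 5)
Your overall plan — a spinorial Witten argument, a Weitzenb\"ock identity, solving a Dirac-type equation on the Heisenberg end, and reading off the p-mass as a boundary flux — is the right skeleton and matches the paper's strategy. But there are two genuine gaps, and the first is the crux of why the theorem is stated in dimension 5.

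The paper's Weitzenb\"ock formula on a general $(2n+1)$-dimensional pseudohermitian spin manifold is
\begin{equation*}
D_{\xi }^{2}=\nabla ^{\ast }\nabla +W-2\sum_{\beta =1}^{n}e_{\beta }e_{n+\beta }\nabla _{T},
\end{equation*}
so the obstruction is \emph{not} a torsion term but a first-order Reeb-derivative term. Your proposal to neutralize it by ``choosing the torsion correction of $D$ and $\nabla$ carefully and then using sphericalness (vanishing Cartan curvature)'' will not work: adding a zeroth-order torsion correction to $D$ cannot cancel the $\nabla_{T}$ term, and sphericity constrains the fourth-order Chern tensor, not a Reeb-derivative operator on spinors. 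Sphericity enters the paper only in the Fredholm theory for $D_\xi^2$ and in the rigidity step, not in the curvature identity. The actual resolution is a purely algebraic fact about $\mathrm{Cl}_{4}$ acting on positive spinors: for $n=2$ one has $\sum_{\beta =1}^{2}e_{\beta }e_{2+\beta }=0$ on $S^{+}(4)$, so the bad term vanishes identically on $\Gamma (\mathbb{S}^{+})$. This is exactly where dimension $5$ is used, and this algebraic observation is missing from your proposal. Relatedly, the paper solves the second-order equation $D_{\xi }^{2}\psi =0$ (not $D\psi =0$) because it is the second-order operator that becomes subelliptic on $\mathbb{S}^{+}$ via the Weitzenb\"ock formula; the mass then appears in the boundary integral of the second-order Green identity.

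The rigidity step is also substantially underargued. From the Witten-type formula and $W\geq 0$, $m=0$ gives $W\equiv 0$ and $\nabla\psi=0$ in the horizontal directions, but this does \emph{not} immediately force the pseudohermitian torsion $A_{\alpha\beta}$ to vanish, since $A$ does not appear in the curvature operator $\mathcal{R}=W$ and the $\nabla_{T}$ term has been algebraically killed on $\mathbb{S}^{+}$ rather than shown to be $0$ on the solution. Your assertion that a basis of horizontally parallel spinors forces both curvature and torsion to vanish is not justified. The paper needs a genuinely separate argument for the torsion, namely a Schoen--Yau-type variational trick: deform $J$ along the Reeb flow, keep the conformal factor so that $W_{s}$ stays zero, and compute the first variation of the p-mass, which is a negative multiple of $\int |A_{\alpha\gamma}|^{2}$; nonvanishing torsion would then produce a negative p-mass, contradicting the first half of the theorem. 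Only after $W=0$ and $A=0$ does sphericity (via Bianchi identities) force the full pseudohermitian curvature to vanish, and then a development argument gives the global isomorphism with $H_{2}$. Without this deformation step your rigidity proof is incomplete.
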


\begin{corollary}
\label{PMT'} Suppose that $(M,\xi )$ is a closed (compact with no boundary),
contact and spin manifold of dimension 5. Assume that $J$ is a spherical $CR$
structure on $(M,\xi )$ with $\mathcal{Y(}M,J)$ $>$ $0.$ Then the associated
p-mass $m(J,\theta )\geq 0$. Moreover, $m(J,\theta )=0$ if and only if $%
(M,J) $ is $CR$ equivalent to the standard $CR$ $5$-sphere.
\end{corollary}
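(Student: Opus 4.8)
The plan is to derive Corollary \ref{PMT'} from Theorem \ref{PMT} by the CR analogue of Schoen's blow-up construction, which reduces the Yamabe-type minimizer problem to a positive mass theorem. Fix a contact form $\theta$ on $(M,\xi)$ compatible with $J$ and write $L_b := (2+\tfrac{2}{n})\Delta_b + W$ (here $n=2$) for the CR conformal sublaplacian. The hypothesis $\mathcal{Y}(M,J)>0$ says precisely that the quadratic form in the numerator of (\ref{1-2}) is positive definite, so $L_b$ has positive first eigenvalue and is in particular invertible; consequently its Green's function $G_p$ with pole at a fixed point $p\in M$ exists and is positive on $N := M\setminus\{p\}$ (a standard consequence of $\mathcal{Y}(M,J)>0$, via the positive first eigenfunction and the maximum principle). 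Put $\hat\theta := G_p^{2/n}\theta$ on $N$, so that $m(J,\hat\theta)$ is the p-mass associated with $(M,J)$ referred to in the statement.

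First I would check the structural hypotheses of Theorem \ref{PMT} for $(N,J,\hat\theta)$. It is spin, since a spin structure on $M$ restricts to one on the open set $N$; its CR structure $J$ is still spherical, sphericity being local; and it is scalar-flat, hence $W_{\hat\theta}\ge 0$, because the transformation law of the Webster scalar curvature gives $W_{\hat\theta} = G_p^{-1-2/n}L_b G_p = 0$ on $N$. The one substantive step, which I expect to be the main obstacle, is to verify that $(N,J,\hat\theta)$ is asymptotically flat in the precise sense of (\ref{af1}) and to identify $m(J,\hat\theta)$ with the constant term in the expansion of $G_p$. Here sphericity enters decisively: arranging that $\theta$ agrees near $p$ with the pullback of the Heisenberg contact form $\mathring\theta$ under a local CR chart, the sublaplacian near $p$ becomes exactly the Heisenberg one, so by a removable-singularity argument $G_p = c_n(\rho^{-2n}+A+O(\rho))$ with $\rho$ the Heisenberg gauge, $c_n>0$ and $A\in\R$; applying a CR inversion then exhibits the end of $(N,J,\hat\theta)$ as a Heisenberg end carrying exactly the flat CR structure $\mathring J$ and a contact form converging to a nonzero multiple of $\mathring\theta$ at the rate demanded by (\ref{af1}). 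Evaluating the boundary integral (\ref{af10}) on this end gives $m(J,\hat\theta)=c_n'A$ for a positive dimensional constant $c_n'$.

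Granting this, Theorem \ref{PMT} applies and yields $m(J,\hat\theta)\ge 0$, the asserted nonnegativity. For the rigidity statement, suppose $m(J,\hat\theta)=0$. Then Theorem \ref{PMT} forces $(N,J,\hat\theta)$ to be pseudohermitian-isomorphic to $(H_2,\mathring J,\mathring\theta)$; in particular $N$ is CR equivalent to $(H_2,\mathring J)$, hence to the standard CR $5$-sphere minus a point. Since $M$ is the one-point compactification of $N$ and the CR structure of $M$ is the (unique) smooth extension across $p$ of the one on $N$, it follows that $(M,J)$ is CR equivalent to $(S^5,\hat J)$. Conversely, if $(M,J)$ is CR equivalent to $(S^5,\hat J)$, then blowing up at a point reproduces $(H_2,\mathring J,\mathring\theta)$ up to scaling (the Cayley transform), and the explicit Green's function of $L_b$ on the standard sphere has vanishing constant term, so $m(J,\hat\theta)=0$. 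This proves the corollary.
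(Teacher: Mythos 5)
Your proposal is essentially the paper's own proof: blow up at a point via the Green's function of $L_b$ (which exists since $\mathcal{Y}(M,J)>0$), check that the resulting $(N,J,G_p^{2/n}\theta)$ is spin, spherical, scalar-flat, and asymptotically flat (the paper packages the asymptotic flatness and the identification of the p-mass with the constant term of the Green's function into Proposition \ref{BlmG}, whose argument — CR normal coordinates, expansion of $G_p$, inverted coordinates — is exactly what you describe), then invoke Theorem \ref{PMT}. Your write-up simply supplies a bit more detail in the rigidity step and spells out the converse direction, which the paper treats implicitly.
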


The proof of Theorem \ref{PMT} is based on a Weitzenbock-type formula:%
\begin{equation}
D_{\xi }^{2}=\nabla ^{\ast }\nabla +W-2\sum_{\beta =1}^{n}e_{\beta
}e_{n+\beta }\nabla _{T}  \label{1-4}
\end{equation}

\noindent where $D_{\xi }$ and $\nabla $\ denote the contact Dirac operator
and spin connection respectively\ (see (\ref{WF}) in Section \ref{SSCB}).
The term involving $\nabla _{T}$ ($T$ is the Reeb vector field associated to
the contact form $\theta ;$ see the Appendix for its definition) causes
difficulty to solve the Dirac equation $D_{\xi }^{2}\psi $ $=$ $0$ in
general. However, in the case of dimension 5 ($n=2$) we observe the
following algebraic fact for Clifford multiplication:%
\begin{equation}
\sum_{\beta =1}^{2}e_{\beta }e_{2+\beta }=0\text{ on }S^{+}(2n)\overset{n=2}{%
=}S^{+}(4)  \label{1-5}
\end{equation}

\noindent where $S^{+}(2n)$ denotes the space of positive spinors (see (\ref%
{keyfor})). So for the dimension equal to 5, the last term in (\ref{1-4})
disappears when acting on (sections of) $\mathbb{S}^{+},$ the bundle of
positive spinors (we often do not distinguish between the bundle $\mathbb{S}%
^{+}$ and the space of its sections $\Gamma (\mathbb{S}^{+})$ if no
confusion can occur$).$ It follows that $D_{\xi }^{2}$ is subelliptic on $%
\mathbb{S}^{+}$ and hence we can find a spinor field $\psi $ $\in $ $\mathbb{%
S}^{+}$ such that $D_{\xi }^{2}\psi =0$ and $\psi $ tends to a constant
spinor at the infinity (see Corollary \ref{Sol}). Applying (\ref{1-4}) to
this spinor field $\psi $ and integrating after taking the inner product
with $\psi ,$ we then pick up (a positive multiple of) the p-mass $%
m(J,\theta )$ from the boundary integral and obtain a Witten-type formula
for $m(J,\theta )$ (see (\ref{WTF}) and (\ref{af12}). So nonnegativity of $%
m(J,\theta )$ follows. To characterize $m(J,\theta )=0$ we need a trick,
among others, inspired by the idea of Schoen and Yau \cite{SY} to show the
torsion vanishes (see Lemma \ref{le02}). To prove Corollary \ref{PMT'} we
first blow up the closed $M$ at a point $p$ by the Green's function of the
CR invariant sublaplacian $(2+\frac{2}{n})\Delta _{b}+W$ to get an
asymptotically flat pseudohermitian manifold $N.$ Then we can apply Theorem %
\ref{PMT} to make the conclusion.

To solve the CR Yamabe minimizer problem, we need a test function estimate
(see Theorem \ref{esofya} in Section 5). The idea was rooted in an argument
used by Schoen in \cite{S} for the Riemannian case. For the CR case, it was
first treated by Z. Li (\cite{Li}) in an unpublished draft. We reorganize
his construction of a family of test functions $\phi _{\beta }$ and clarify
the arguments at some points so that the CR-Sobolev quotient $Q(\phi _{\beta
})$ is less than $\mathcal{Y}(S^{2n+1},\hat{J})$ minus a positive multiple
of the p-mass modulo the terms of higher decay rate (see (\ref{TFE})). \
From Theorem \ref{esofya} the result below follows easily.

\begin{theorem}
\label{YMP} Suppose that $(M,\xi )$ is a closed (compact with no boundary),
contact and spin manifold of dimension 5. Assume that $J$ is a spherical CR
structure on $(M,\xi )$ with $\mathcal{Y(}M,J)$ $>$ $0$. Then the CR Yamabe
minimizer problem is solvable, i.e. we can find a solution to (\ref{1-1})
with minimum energy.
\end{theorem}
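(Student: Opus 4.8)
The plan is to reduce Theorem \ref{YMP} to the strict inequality $\mathcal{Y}(M,J)<\mathcal{Y}(S^{5},\hat{J})$ and then invoke the compactness argument of Jerison and Lee \cite{JL}: once $\mathcal{Y}(M,J)<\mathcal{Y}(S^{5},\hat{J})$, the infimum in (\ref{1-3}) is attained by a positive $C^{\infty}$ function $u$ (as a limit of solutions of the subcritical approximating equations), and such a $u$ solves (\ref{1-1}) with $Q(u)=\mathcal{Y}(M,J)$, which is exactly a minimum-energy solution. So everything comes down to that strict inequality, with one borderline case excepted: if $(M,J)$ is already CR equivalent to the standard CR $5$-sphere $(S^{5},\hat{J})$, then $\mathcal{Y}(M,J)=\mathcal{Y}(S^{5},\hat{J})$ is attained by pulling back the standard Jerison--Lee extremal, and there is nothing to prove. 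Hence I would split into these two cases at the outset and assume henceforth that $(M,J)$ is \emph{not} CR equivalent to $(S^{5},\hat{J})$.

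In the non-sphere case I would first produce a strictly positive p-mass via Corollary \ref{PMT'}. Since $\mathcal{Y}(M,J)>0$, the CR invariant sublaplacian $L:=(2+\frac{2}{n})\Delta_{b}+W$ (with $n=2$) is a positive operator, so it has a positive Green's function $G=G_{p}$ with pole at a chosen point $p\in M$ and $LG=0$ on $N:=M\setminus\{p\}$. Set $\hat{\theta}:=G^{2/n}\theta$ on $N$. Conformal covariance of $L$ gives that $(N,J,\hat{\theta})$ has vanishing Tanaka--Webster scalar curvature $\hat{W}=0\geq 0$; expanding $G$ near $p$ in CR normal coordinates shows $(N,J,\hat{\theta})$ is asymptotically flat in the sense of (\ref{af1}). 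It remains spin (deleting a point does not alter the spin structure) and spherical (the CR structure is unchanged). So Corollary \ref{PMT'} applies: $m(J,\hat{\theta})\geq 0$, with equality \emph{only} if $(M,J)$ is CR equivalent to $(S^{5},\hat{J})$. Being in the non-sphere case, we get $m(J,\hat{\theta})>0$ strictly.

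Next I would invoke the test-function estimate of Section 5. Using the asymptotically flat coordinates at $p$, one builds the family $\phi_{\beta}>0$ on $M$ of Theorem \ref{esofya}, following Z. Li \cite{Li} (itself modeled on Schoen's Riemannian argument \cite{S}), obtained by gluing a rescaled Jerison--Lee extremal on a small neighborhood of $p$ to a multiple of $G$ outside. Substituting $\phi_{\beta}$ into $Q$ and carrying out the expansion yields (this is (\ref{TFE}))
\begin{equation}
Q(\phi_{\beta})\leq \mathcal{Y}(S^{5},\hat{J})-c\,m(J,\hat{\theta})+(\text{terms of higher decay rate}),
\end{equation}
with $c>0$ a universal constant. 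Because $m(J,\hat{\theta})>0$, taking $\beta$ large forces the right-hand side strictly below $\mathcal{Y}(S^{5},\hat{J})$, so $\mathcal{Y}(M,J)=\inf_{v}Q(v)<\mathcal{Y}(S^{5},\hat{J})$; feeding this into the Jerison--Lee attainment criterion finishes the proof.

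The genuinely hard parts are not in this assembly but in its two inputs. First, Corollary \ref{PMT'} rests on Theorem \ref{PMT}, the CR positive mass theorem in dimension $5$, whose proof needs the Weitzenböck formula (\ref{1-4}), the dimension-specific Clifford identity (\ref{1-5}) to annihilate the $\nabla_{T}$ term on positive spinors so that $D_{\xi}^{2}$ is subelliptic on $\mathbb{S}^{+}$, the solvability of $D_{\xi}^{2}\psi=0$ with $\psi$ tending to a constant spinor at infinity, the resulting Witten-type integral formula extracting $m(J,\hat{\theta})$ from the boundary term, and finally the rigidity argument (torsion vanishing, Lemma \ref{le02}, via a Schoen--Yau-type trick) for the equality case. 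Second, Theorem \ref{esofya} requires controlling the expansion of $G$ in CR normal coordinates on a spherical manifold and identifying precisely how the p-mass appears as the coefficient of the relevant order term in $Q(\phi_{\beta})$ — that bookkeeping is the delicate step. Given both results, Theorem \ref{YMP} follows immediately by the case split and the Jerison--Lee compactness argument above.
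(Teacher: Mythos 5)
Your proposal is correct and follows essentially the same route as the paper: reduce to the strict CR Sobolev inequality $\mathcal{Y}(M,J)<\mathcal{Y}(S^{5},\hat{J})$ via the test-function estimate of Theorem \ref{esofya} keyed to the positivity of the p-mass from Corollary \ref{PMT'}, then invoke the Jerison--Lee attainment criterion. The only cosmetic difference is the organization of the case split: the paper branches first on $m>0$ versus $m=0$ and uses Corollary \ref{PMT'} to translate $m=0$ into CR equivalence with $S^{5}$, while you branch first on CR equivalence with $S^{5}$ and use Corollary \ref{PMT'} to get $m>0$ in the complementary case; these are logically equivalent.
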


In Section 6 we show that the connected sum of finitely many (duplication
allowed) 5-manifolds chosen arbitrarily from the set consisting of $S^{5}/%
\mathbb{Z}_{p},$ $p:$ odd integer, $S^{4}\times S_{(a)}^{1},$ $a>1$ and $%
\mathbb{RP}^{5}$ $\sharp $ $\mathbb{RP}^{5}$ is still a closed, contact spin
5-manifold which admit a spherical CR structure with positive CR Yamabe
constant (see Proposition \ref{prop-ex}). For instance, the following is
such an example:%
\begin{equation*}
m_{1}(S^{5}/\mathbb{Z}_{p_{1}})\#l_{1}(S^{4}\times S_{(a)}^{1})\#m_{2}(S^{5}/%
\mathbb{Z}_{p_{2}})\#l_{2}(\mathbb{RP}^{5}\sharp \mathbb{RP}^{5})
\end{equation*}

\noindent where $m_{j},$ $l_{j},$ $j=1,2$ are some positive integers and $%
p_{j},$ $j=1,2$ are odd integers. So on such a 5-dimensional closed, contact
and spin manifold, one can find a solution to the CR Yamabe equation (\ref%
{1-1}) with minimal energy for any spherical CR structure (by Theorem \ref%
{YMP} for $\mathcal{Y(}M,J)$ $>$ $0$ and Jerison-Lee \cite{JL} for $\mathcal{%
Y(}M,J)$ $\leq $ $0$ ($<$ $\mathcal{Y}(S^{5},\hat{J}))).$ It should be
mentioned that by a different approach equation (\ref{1-1}) always has a
solution (\cite{Ga}, \cite{GY}), but the solution may not be a minimizer for
the CR-Sobolev quotient $Q(v)$ (\ref{1-2}). In fact, in the case of
dimension 3 \cite{CMY1} we have exotic 3-spheres (called Rossi spheres) with
negative p-mass, on which the infimum of $Q(v)$ (with $n=1)$ is not attained
while the solution to (\ref{1-1}) exists according to the above-mentioned
result.

\begin{acknowledgement}
The authors would like to thank the Ministry of Science and Technology of
Taiwan for the support: grant no. MOST 108-2115-M-001-010 and grant no. MOST
109-2115-M-007-004-MY3 respectively. We would also like to thank Paul Yang
and Andrea Malchiodi for many discussions and constant interest in our work
during the preparation.
\end{acknowledgement}

\section{Spin structure on a contact bundle}

\label{SSCB}

Let $(M^{2n+1},\xi )$ be a contact manifold of dimension $2n+1$ with a
coorientable (i.e. $TM/\xi $ is trivial) contact structure $\xi .$ Take a
(global) contact form $\theta $ (exists by coorientation of $\xi $) such
that $\xi =\ker {\theta }$ (and $\theta \wedge (d\theta )^{n}$ $\neq $ $0$).
Let $(M^{2n+1},J,\theta )$ be a pseudohermitian manifold (see the Appendix
for an introduction). The Levi metric $L_{\theta}$ on $\xi $ is a Riemannian
structure on $\xi $ defined by $L_{\theta}(X,Y)=d\theta (X,JY)=\frac{1}{2}%
d\theta (X\wedge JY)\ $for all$\ X,Y\in \xi .$ Then $L_{\theta}(JX,JY)=L_{%
\theta}(X,Y)$ for all $X,Y\in \xi $. We equip the vector bundle $\xi $ with
this Riemannian structure $L_{\theta}$.

\subsection{Spin structure on $\protect\xi$:}

Let $SO(\xi )$ be the oriented orthonormal frame bundle. A spin structure $%
Spin(\xi )$ on $\xi $ is a principal $Spin(2n)$-bundle such that $Spin(\xi
)\times _{\rho }SO(2n)=SO(\xi )$ where $\rho :Spin(2n)\rightarrow SO(2n)$ is
the standard $2$-sheeted covering.

\begin{lemma}
Suppose $(M,\xi )$ is a coorientable contact manifold. Then the contact
bundle $\xi $ is spin if and only if the tangent bundle $TM$ is spin.
\end{lemma}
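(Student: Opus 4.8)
The statement to prove is: for a coorientable contact manifold $(M,\xi)$, the bundle $\xi$ is spin if and only if $TM$ is spin.

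The plan is to exploit the splitting $TM \cong \xi \oplus (TM/\xi)$ and the fact that coorientability makes $TM/\xi$ a trivial real line bundle. First I would fix the contact form $\theta$ (available by coorientation) and the Reeb vector field $T$, so that $TM = \xi \oplus \R T$ as a smooth direct sum, with $\R T$ trivial. Since Stiefel--Whitney classes are multiplicative over Whitney sums and vanish on trivial bundles, $w_i(TM) = w_i(\xi)$ for $i = 1, 2$; in particular $w_1(TM) = w_1(\xi)$ and $w_2(TM) = w_2(\xi)$. Because a real vector bundle of rank $\geq 1$ over a manifold (a CW complex) is spin precisely when $w_1$ and $w_2$ both vanish, this immediately gives the equivalence: $TM$ spin $\iff w_1(TM)=w_2(TM)=0 \iff w_1(\xi)=w_2(\xi)=0 \iff \xi$ spin.

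For a cleaner, bundle-level argument that matches the definition given just above (a spin structure on $\xi$ as a principal $Spin(2n)$-bundle double-covering $SO(\xi)$), I would instead argue directly with frame bundles. Choosing $\theta$ orients $TM/\xi$, and together with the Levi metric $L_\theta$ on $\xi$ and the orientation it induces, one gets a Riemannian metric $g = L_\theta \oplus \theta^2$ on $TM$ with a compatible orientation. The trivialization of $TM/\xi$ by the unit-length Reeb field identifies $SO(TM) \cong SO(\xi) \times_M \{+1\}$, i.e. there is a canonical bundle isomorphism $SO(\xi) \xrightarrow{\ \sim\ } SO(TM)$ covering the inclusion $SO(2n) \hookrightarrow SO(2n+1)$ (adding the last coordinate fixed). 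Then, given a spin structure $Spin(\xi)$, one forms $Spin(\xi) \times_{Spin(2n)} Spin(2n+1)$ using the standard inclusion $Spin(2n) \hookrightarrow Spin(2n+1)$ lifting $SO(2n) \hookrightarrow SO(2n+1)$; this is a principal $Spin(2n+1)$-bundle whose quotient by $\Z/2$ is $SO(\xi) \times_{SO(2n)} SO(2n+1) \cong SO(TM)$, hence a spin structure on $TM$. Conversely, given $Spin(TM)$, one restricts the structure group back along $Spin(2n) \hookrightarrow Spin(2n+1)$ using the section of $SO(TM) \to SO(TM)/SO(2n)$ provided by the Reeb trivialization (equivalently, pull back along the bundle map $SO(\xi) \to SO(TM)$) to recover a $Spin(2n)$-bundle double-covering $SO(\xi)$.

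The only genuine subtlety, and thus the main point to be careful about, is the low-rank case $n=0$ (i.e. $\dim M = 1$): there $\xi$ is the zero bundle and "spin" is vacuous, while $TM$ is a line bundle, automatically orientable hence spin on a coorientable contact $1$-manifold — so both sides hold trivially. (One can also simply note $\dim M = 2n+1 \geq 3$ in the paper's setting, making $n \geq 1$ and $SO(\xi)$ a genuine nonempty bundle.) Aside from this degenerate range, the argument is the standard "add a trivial line bundle" fact; I would present the Stiefel--Whitney version as the short proof and remark that the frame-bundle construction above makes the correspondence of spin structures explicit, which is what is actually used later when the contact Dirac operator $D_\xi$ is built.
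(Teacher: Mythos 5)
Your proposal is correct and its core argument is exactly the paper's: split $TM=\xi\oplus\mathbb{R}T$ using the Reeb field from coorientability and apply the Whitney product formula to conclude $w_2(TM)=w_2(\xi)$ (the paper works only with $w_2$, taking orientability for granted, while you also track $w_1$). The additional frame-bundle construction making the correspondence of spin structures explicit, and the remark on the degenerate rank-zero case, go beyond what the paper records but are consistent with it.
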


\begin{proof}
It is equivalent to showing that the second Stiefel-Whitney class $w_{2}(\xi
)=0$ if and only if the second Stiefel-Whitney class $w_{2}(TM)=0$. Let $T$
be the Reeb vector field relative to $\theta $. Then $T$ defines a trivial
line bundle $\mathbb{R}T$ over $M$. We have $TM=\xi \oplus \mathbb{R}T$. By
the Whitney product formula, we have 
\begin{equation*}
w_{2}(TM)=w_{2}(\xi )+w_{1}(\xi )w_{1}(\mathbb{R}T)+w_{2}(\mathbb{R}%
T)=w_{2}(\xi ).
\end{equation*}%
The proof follows.
\end{proof}

\subsection{Spinor bundle}

Let $\Lambda _{\mathbb{R}}^{k},\Lambda _{\mathbb{C}}^{k}$ denote the real
and complex vector spaces respectively, spanned by $\{\omega ^{j_{1}}\wedge
\cdots \wedge \omega ^{j_{k}}|\ 1\leq j_{1}<\cdots <j_{k}\leq n\}$ (view
symbols $\omega ^{1},$ $..,$ $\omega ^{n}$ as independent vectors). Let 
\begin{equation*}
\Lambda _{\mathbb{R}}^{\ast }:=\sum_{k}\Lambda _{\mathbb{R}}^{k},\ \ \Lambda
_{\mathbb{C}}^{\ast }:=\sum_{k}\Lambda _{\mathbb{C}}^{k}.
\end{equation*}%
Define the linear maps $\epsilon _{j}$ and $\iota _{j}$ by%
\begin{equation*}
\epsilon _{j}:\Lambda _{\mathbb{R}}^{\ast }\rightarrow \Lambda _{\mathbb{R}%
}^{\ast },\ \ \omega ^{j_{1}}\wedge \cdots \wedge \omega ^{j_{k}}\mapsto
\omega ^{j}\wedge \omega ^{j_{1}}\wedge \cdots \wedge \omega ^{j_{k}};
\end{equation*}%
\begin{equation*}
\iota _{j}:\Lambda _{\mathbb{R}}^{\ast }\rightarrow \Lambda _{\mathbb{R}%
}^{\ast },\ \ \omega ^{j_{1}}\wedge \cdots \wedge \omega ^{j_{k}}\mapsto
\sum_{s=1}^{k}(-1)^{s-1}\delta _{jj_{s}}\omega ^{j_{1}}\wedge \cdots \wedge 
\widehat{\omega }^{j_{s}}\wedge \cdots \wedge \omega ^{j_{k}},
\end{equation*}%
\noindent and $E_{a},\ 1\leq a\leq 2n,$ by 
\begin{equation*}
E_{2j-1}=\epsilon _{j}-\iota _{j},\ \ \ E_{2j}=i(\epsilon _{j}+\iota _{j}),\
\ \ 1\leq j\leq n.
\end{equation*}

\noindent It follows that 
\begin{equation*}
E_{a}E_{b}+E_{b}E_{a}=-2\delta _{ab},\ \ 1\leq a,b\leq 2n.
\end{equation*}%
\noindent Hence $\{E_{a}|1\leq a\leq 2n\}$ spans the Clifford algebra $%
C_{2n}(-1)$. This defines the algebra isomorphism of $C_{2n}(-1)\otimes 
\mathbb{C}$ and $End_{\mathbb{C}}(\Lambda _{\mathbb{C}}^{\ast }(n))$ through
the action of $C_{2n}(-1)$ on $\Lambda _{\mathbb{C}}^{\ast }(n)$, which is
denoted by $\mathcal{N}$. Let $S(2n):=\Lambda _{\mathbb{C}}^{\ast }(n),\
S^{+}(2n):=\Lambda _{\mathbb{C}}^{even}(n)$ and $S^{-}(2n):=\Lambda _{%
\mathbb{C}}^{odd}(n)$. Then $S(2n)$ is an irreducible $C_{2n}(-1)$-module
and $S^{\pm }(2n)$ are two irreducible $Spin(2n)$-modules with dimension $%
2^{n-1}$. Confining the action of $\mathcal{N}$ to $Spin(2n)$ $\subset $ $%
C_{2n}(-1),$ we define the vector bundles 
\begin{equation}
\mathbb{S}:=Spin(\xi )\times _{\mathcal{N}}S(2n),\ \mathbb{S}^{\pm
}:=Spin(\xi )\times _{\mathcal{N}}S^{\pm }(2n)  \label{spinor}
\end{equation}%
\noindent called spinor bundles. For $n$ $=$ $2,\ \Lambda _{\mathbb{C}%
}^{\ast }(2)$ $=\text{span}\{1,\omega ^{1},\omega ^{2},\omega ^{1}\wedge
\omega ^{2}\}$, we have $E_{1}E_{3}+E_{2}E_{4}=-2(\iota _{1}\epsilon
_{2}+\epsilon _{1}\iota _{2})$ and hence, by a straightforward computation 
\begin{equation}
\begin{split}
(E_{1}E_{3}+E_{2}E_{4})& 1=0 \\
(E_{1}E_{3}+E_{2}E_{4})& \omega ^{1}\wedge \omega ^{2}=0 \\
(E_{1}E_{3}+E_{2}E_{4})& \omega ^{1}=2\omega ^{2} \\
(E_{1}E_{3}+E_{2}E_{4})& \omega ^{2}=-2\omega ^{1}.
\end{split}
\label{keyformula}
\end{equation}%
Let $\sigma $ be a local section of $Spin(\xi )$ such that $\rho _{\ast
}(\sigma )=\{e_{1},\cdots ,e_{2n}\}$ be an orthonormal frame filed of $\xi $
and $e_{n+\beta }=Je_{\beta },\ 1\leq \beta \leq n$. By the first two
formulas in (\ref{keyformula}), we immediately obtain the following result.

\begin{lemma}
Suppose that $n=2$. Then it holds that%
\begin{equation}
\sum_{\beta =1}^{2}e_{\beta }e_{2+\beta }=0\ \ \text{on}\ S^{+}(4).
\label{keyfor}
\end{equation}
\end{lemma}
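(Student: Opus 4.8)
The plan is to reduce the claimed identity $\sum_{\beta=1}^{2}e_{\beta}e_{2+\beta}=0$ on $S^{+}(4)$ to the already-computed formulas in (\ref{keyformula}). Recall that the frame $\{e_{1},\dots,e_{4}\}$ is the image under $\rho_{*}$ of the local spin frame $\sigma$, and the spinor action of $e_{a}$ is by definition the Clifford multiplication by $E_{a}$ via $\mathcal{N}$ (restricted to $Spin(4)\subset C_{4}(-1)$). Since $e_{2+\beta}=Je_{\beta}$ for $\beta=1,2$, the operator in question is, under $\mathcal{N}$, precisely $E_{1}E_{3}+E_{2}E_{4}$ acting on $\Lambda_{\mathbb{C}}^{\ast}(2)$. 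Thus it suffices to show this operator annihilates the even part $S^{+}(4)=\Lambda_{\mathbb{C}}^{even}(2)=\mathrm{span}\{1,\ \omega^{1}\wedge\omega^{2}\}$.

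First I would invoke the identity $E_{1}E_{3}+E_{2}E_{4}=-2(\iota_{1}\epsilon_{2}+\epsilon_{1}\iota_{2})$, which follows by substituting $E_{2j-1}=\epsilon_{j}-\iota_{j}$ and $E_{2j}=i(\epsilon_{j}+\iota_{j})$ and expanding, using the Clifford relations $\epsilon_{j}^{2}=\iota_{j}^{2}=0$ and $\epsilon_{j}\iota_{k}+\iota_{k}\epsilon_{j}=\delta_{jk}$. Then I would simply evaluate $\iota_{1}\epsilon_{2}+\epsilon_{1}\iota_{2}$ on the two even basis elements: on $1$, both $\iota_{1}$ and $\iota_{2}$ kill $1$ (an interior product of a $0$-form), so the operator gives $0$; on $\omega^{1}\wedge\omega^{2}$, we get $\iota_{1}\epsilon_{2}(\omega^{1}\wedge\omega^{2})+\epsilon_{1}\iota_{2}(\omega^{1}\wedge\omega^{2})$, and $\epsilon_{2}(\omega^{1}\wedge\omega^{2})=\omega^{2}\wedge\omega^{1}\wedge\omega^{2}=0$ while $\iota_{2}(\omega^{1}\wedge\omega^{2})=-\omega^{1}$ so $\epsilon_{1}\iota_{2}(\omega^{1}\wedge\omega^{2})=-\omega^{1}\wedge\omega^{1}=0$; hence the operator also vanishes on $\omega^{1}\wedge\omega^{2}$. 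These are exactly the first two lines of (\ref{keyformula}), so one may equally well just quote them directly.

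Since $S^{+}(4)$ is spanned by $1$ and $\omega^{1}\wedge\omega^{2}$ and $\sum_{\beta=1}^{2}e_{\beta}e_{2+\beta}$ acts as $E_{1}E_{3}+E_{2}E_{4}=-2(\iota_{1}\epsilon_{2}+\epsilon_{1}\iota_{2})$ under $\mathcal{N}$, linearity gives that this operator is identically zero on $S^{+}(4)$, as claimed. The only point requiring a word of care is the translation dictionary between the abstract Clifford generators $E_{a}$ and the geometric Clifford multiplications $e_{a}\cdot$ on the spinor bundle $\mathbb{S}$, together with checking that the pairing convention $e_{n+\beta}=Je_{\beta}$ matches the index pairing $(1,3),(2,4)$ for $n=2$ — but this is built into the choice of $\sigma$ and the definition of $\mathbb{S}^{\pm}$ in (\ref{spinor}), so there is no genuine obstacle; the lemma is essentially a bookkeeping consequence of (\ref{keyformula}).
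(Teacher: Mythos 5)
Your proposal is correct and follows essentially the same route as the paper: the paper likewise reduces the lemma to the first two formulas of (\ref{keyformula}), i.e.\ that $E_{1}E_{3}+E_{2}E_{4}=-2(\iota_{1}\epsilon_{2}+\epsilon_{1}\iota_{2})$ annihilates $1$ and $\omega^{1}\wedge\omega^{2}$, which span $S^{+}(4)=\Lambda_{\mathbb{C}}^{even}(2)$. Your explicit re-derivation of those two evaluations (and the remark on matching the pairing $e_{n+\beta}=Je_{\beta}$ with the indices $(1,3),(2,4)$) is just the bookkeeping the paper leaves implicit.
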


\begin{remark}
\label{r2-2} We observe that $\sum_{\beta =1}^{n}e_{\beta }e_{n+\beta }$ $%
\neq $ $0$ on $S^{+}(2n)$ unless $n=2.$
\end{remark}

\subsection{Spin connection}

Let $\sigma $ be a local section of $Spin(\xi )$ such that $\rho _{\ast
}(\sigma )=\{e_{1},\cdots ,e_{2n}\}$ is an orthonormal frame field of $\xi $
with $e_{n+\beta }=Je_{\beta },\ 1\leq \beta \leq n$. Define 
\begin{equation}
\omega _{\sigma }:=\frac{1}{4}\sum_{a,b=1}^{2n}\omega _{b}{}^{a}e_{a}e_{b},
\label{spicon}
\end{equation}%
where $\omega _{b}{}^{a}$ is the pseudohermitian connection forms with
respect to $\{e_{1},\cdots ,e_{2n}\}$ (for the complex version, see the
Appendix for an explanation). Then (cf. \cite{Yu} for such an expression)%
\begin{equation}
\varpi =\{\omega _{\sigma }\}  \label{spicon-1}
\end{equation}%
is a connection of $Spin(\xi )$ with associated spin connection $\nabla $
acting on spinors (or spinor fields), sections of $\mathbb{S}:=Spin(\xi
)\times _{\mathcal{N}}S(2n),$ denoted as $\Gamma (\mathbb{S})$ (see (\ref%
{spinor})). The curvature form $\Omega _{\sigma }$ is defined by 
\begin{equation}
\Omega _{\sigma }=d\omega _{\sigma }+\frac{1}{2}[\omega _{\sigma },\omega
_{\sigma }]  \label{curfor}
\end{equation}%
We have 
\begin{equation}
\Omega _{\sigma }=\frac{1}{4}\sum_{a,b=1}^{2n}\Omega _{b}{}^{a}e_{a}e_{b},
\end{equation}%
where 
\begin{equation}
\Omega _{b}{}^{a}=d\omega _{b}{}^{a}+\omega _{b}{}^{c}\wedge \omega
_{c}{}^{a}.
\end{equation}

\noindent For $X,$ $Y$ $\in $ $\xi $ let $R_{XY}$ denote the curvature
operator of the spin connection $\nabla $ acting on spinors while $%
R_{XY}^{p.h.}$ denotes the curvature operator of the pseudohermitian
connection $\nabla ^{p.h.}.$

\begin{lemma}
\begin{equation}  \label{currep}
R_{XY}=\frac{1}{4}\sum_{a,b=1}^{2n}\big<R^{p.h.}_{XY}(e_{a}),e_{b}\big>%
e_{a}e_{b}.
\end{equation}
\end{lemma}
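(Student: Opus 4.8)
The plan is to compute the curvature $2$-form $\Omega_\sigma$ of the spin connection directly from its defining expression (\ref{spicon}), (\ref{curfor}), and match the result against the pseudohermitian curvature forms $\Omega_b{}^a$. First I would expand $\Omega_\sigma = d\omega_\sigma + \tfrac12[\omega_\sigma,\omega_\sigma]$ using $\omega_\sigma = \tfrac14\sum_{a,b}\omega_b{}^a\, e_a e_b$. The term $d\omega_\sigma = \tfrac14\sum_{a,b}(d\omega_b{}^a)\,e_a e_b$ is immediate since the $e_a e_b$ are constant generators of the Clifford algebra. For the bracket term, I would use the fact that in the Clifford algebra the commutator of two degree-two elements $[e_a e_b, e_c e_d]$ reduces (via the relations $e_a e_b + e_b e_a = -2\delta_{ab}$) to a linear combination of degree-two elements $e_p e_q$; carrying out this standard computation yields $\tfrac12[\omega_\sigma,\omega_\sigma] = \tfrac14\sum_{a,b}(\omega_b{}^c\wedge\omega_c{}^a)\,e_a e_b$. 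Adding the two pieces gives $\Omega_\sigma = \tfrac14\sum_{a,b}\big(d\omega_b{}^a + \omega_b{}^c\wedge\omega_c{}^a\big)e_a e_b = \tfrac14\sum_{a,b}\Omega_b{}^a\, e_a e_b$, which is the formula already displayed in the excerpt.

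Next I would pass from the curvature $2$-form to the curvature operator. By the general theory of associated bundles, the curvature operator $R_{XY}$ of $\nabla$ acting on a spinor field is obtained by evaluating the curvature $2$-form on the pair $(X,Y)$ and letting the resulting Lie-algebra element (here an element of $\mathfrak{spin}(2n)\subset C_{2n}(-1)$) act via $\mathcal N$ on the spinor. Thus $R_{XY} = \tfrac14\sum_{a,b}\Omega_b{}^a(X,Y)\,e_a e_b$. It remains only to identify $\Omega_b{}^a(X,Y)$ with $\langle R^{p.h.}_{XY}(e_a), e_b\rangle$. This is precisely the standard relation between the curvature forms and the curvature tensor of a metric connection written in an orthonormal frame: for the pseudohermitian connection $\nabla^{p.h.}$, which preserves the Levi metric on $\xi$, one has $R^{p.h.}_{XY}(e_a) = \sum_b \Omega_b{}^a(X,Y)\, e_b$, equivalently $\Omega_b{}^a(X,Y) = \langle R^{p.h.}_{XY}(e_a), e_b\rangle$, where angle brackets denote the Levi metric $L_\theta$. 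Substituting this into the previous display yields (\ref{currep}).

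I do not anticipate a serious obstacle here; this is a bookkeeping lemma translating the abstract definition of the induced spin connection into frame components, entirely parallel to the Riemannian case (cf. \cite{Yu}). The one point requiring a little care is the Clifford-algebra identity used in evaluating $\tfrac12[\omega_\sigma,\omega_\sigma]$: one must check that the various index permutations and the $-2\delta_{ab}$ terms combine so that only the expected $\omega_b{}^c\wedge\omega_c{}^a$ contribution survives, with the correct coefficient $\tfrac14$. This is a routine, if slightly tedious, computation; the antisymmetry $\omega_b{}^a = -\omega_a{}^b$ of the connection forms (compatibility with $L_\theta$) is what makes the coefficients work out cleanly.
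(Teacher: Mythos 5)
Your proposal is correct and follows essentially the same route as the paper: pass from the defining $2$-form $\omega_\sigma$ to $\Omega_\sigma=\tfrac14\sum_{a,b}\Omega_b{}^a e_ae_b$, evaluate on $(X,Y)$ to get the curvature operator via the associated-bundle construction, and identify $\Omega_b{}^a(X,Y)$ with $\langle R^{p.h.}_{XY}(e_a),e_b\rangle$. The paper simply records the formula $\Omega_\sigma=\tfrac14\sum\Omega_b{}^a e_ae_b$ without carrying out the Clifford-algebra check of the bracket term, which you spell out; otherwise the two arguments coincide.
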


\begin{proof}
For $\varphi =(\sigma ,f)$ $\in $ $Spin(\xi )\times _{\mathcal{N}}S(2n)$, we
have 
\begin{equation}
\begin{split}
R_{XY}\varphi & =(\sigma ,\Omega _{\sigma }(X\wedge Y)\cdot f) \\
& =\frac{1}{4}\sum_{a,b=1}^{2n}\Omega _{b}{}^{a}(X\wedge Y)(\sigma
,e_{a}e_{b}\cdot f) \\
& =\frac{1}{4}\sum_{a,b=1}^{2n}\big<R_{XY}^{p.h.}(e_{a}),e_{b}\big>%
e_{a}e_{b}\cdot (\sigma ,f).
\end{split}%
\end{equation}
\end{proof}

\begin{lemma}
The spin connection $\nabla $ defined by (\ref{spicon}) and (\ref{spicon-1})
has the following property: (recall that $\nabla ^{p.h.}$ denotes the
pseudohermitian connection) for $X,Y$ $\in $ $\xi ,$ $\varphi $ $\in $ $%
\Gamma (\mathbb{S})$%
\begin{equation}
\nabla _{X}(Y\cdot \varphi )=(\nabla _{X}^{p.h.}Y)\cdot \varphi +Y\cdot
\nabla _{X}\varphi .  \label{leifor}
\end{equation}
\end{lemma}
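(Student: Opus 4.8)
The plan is to verify the Leibniz-type formula \eqref{leifor} by a direct computation in a local spin frame, reducing everything to the known compatibility of the pseudohermitian connection $\nabla^{p.h.}$ with the Levi metric $L_\theta$ and the defining formula \eqref{spicon} for $\omega_\sigma$. First I would fix a local section $\sigma$ of $Spin(\xi)$ with $\rho_*(\sigma)=\{e_1,\dots,e_{2n}\}$ an $L_\theta$-orthonormal frame of $\xi$, and write $Y=\sum_c y^c e_c$ with $y^c$ functions on $M$. A spinor field $\varphi\in\Gamma(\mathbb S)$ is represented as $\varphi=(\sigma,f)$ for an $S(2n)$-valued function $f$. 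Then by the definition of the spin connection associated to the connection form $\varpi=\{\omega_\sigma\}$, we have $\nabla_X\varphi=(\sigma, Xf+\omega_\sigma(X)\cdot f)$, where $\cdot$ is Clifford multiplication by an element of the Clifford algebra acting through $\mathcal N$, and $\omega_\sigma(X)=\tfrac14\sum_{a,b}\omega_b{}^a(X)\,e_ae_b$.

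Next I would expand both sides of \eqref{leifor} in this frame. For the left side, $Y\cdot\varphi=(\sigma,\sum_c y^c\, e_c\cdot f)$, so
\begin{equation*}
\nabla_X(Y\cdot\varphi)=\Big(\sigma,\ \sum_c (Xy^c)\,e_c\cdot f+\sum_c y^c\,e_c\cdot(Xf)+\omega_\sigma(X)\cdot\Big(\sum_c y^c\,e_c\cdot f\Big)\Big).
\end{equation*}
For the right side, $(\nabla_X^{p.h.}Y)\cdot\varphi$ contributes $\big(\sigma,\sum_c(Xy^c)\,e_c\cdot f+\sum_{c,a}y^c\,\omega_c{}^a(X)\,e_a\cdot f\big)$ using $\nabla_X^{p.h.}e_c=\sum_a\omega_c{}^a(X)e_a$, while $Y\cdot\nabla_X\varphi=\big(\sigma,\sum_c y^c\,e_c\cdot(Xf)+\sum_c y^c\,e_c\cdot(\omega_\sigma(X)\cdot f)\big)$. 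Subtracting, the terms with $Xy^c$ and $Xf$ cancel on the nose, and what remains to prove is the purely algebraic Clifford identity, for each fixed $c$,
\begin{equation*}
\omega_\sigma(X)\cdot(e_c\cdot f)-e_c\cdot(\omega_\sigma(X)\cdot f)=\sum_a\omega_c{}^a(X)\,e_a\cdot f,
\end{equation*}
i.e. $[\omega_\sigma(X),e_c]=\sum_a\omega_c{}^a(X)e_a$ in the Clifford algebra. This is the standard commutator computation: using $[e_ae_b,e_c]=e_a e_b e_c-e_c e_a e_b$ together with $e_be_c+e_ce_b=-2\delta_{bc}$ one finds $[e_ae_b,e_c]=-2\delta_{bc}e_a+2\delta_{ac}e_b$, hence $[\tfrac14\sum_{a,b}\omega_b{}^a(X)e_ae_b,\,e_c]=\tfrac14\sum_{a}(-2\omega_c{}^a(X)e_a)+\tfrac14\sum_b(2\omega_b{}^c(X)e_b)$, and antisymmetry of the connection forms $\omega_b{}^a=-\omega_a{}^b$ (from metric compatibility of $\nabla^{p.h.}$ with $L_\theta$ in the orthonormal frame) makes the two sums equal, giving $\sum_a\omega_c{}^a(X)e_a$ as claimed.

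The only genuinely substantive point, hence the ``main obstacle,'' is really a bookkeeping one: making sure the algebraic identity $[e_ae_b,e_c]=-2\delta_{bc}e_a+2\delta_{ac}e_b$ is applied with the correct signs and that the antisymmetry $\omega_b{}^a=-\omega_a{}^b$ is legitimately available — this uses that $\{e_1,\dots,e_{2n}\}$ is $L_\theta$-orthonormal and that $\nabla^{p.h.}$ preserves $L_\theta$ on $\xi$, which is part of the pseudohermitian structure recalled in the Appendix. Once that is in place, the computation above is forced and the formula \eqref{leifor} follows. I would present the proof exactly in this order: set up the frame and the representatives $(\sigma,f)$; expand both sides; cancel the derivative terms; reduce to the Clifford commutator identity; and close by invoking $\omega_b{}^a=-\omega_a{}^b$.
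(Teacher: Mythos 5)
Your overall strategy is correct and is essentially the paper's own argument made explicit: the paper expands $\nabla_X(Y\cdot\varphi)$ in a local section, differentiates the equivariance $g\cdot(v\cdot f)=(gvg^{-1})\cdot(g\cdot f)$ of the spin representation, and then quotes the Lie-algebra identification $[\omega_\sigma(X),v]=\omega^{p.h.}(X)v$ under $\rho_\ast$; you instead verify that commutator identity directly from $[e_ae_b,e_c]=2\delta_{ac}e_b-2\delta_{bc}e_a$. The cancellation of the $Xy^c$ and $Xf$ terms is the same in both versions, and your route has the merit of actually proving, rather than citing, the identification behind the covering map.

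The one genuine problem is the final line of your Clifford computation: the sign does not come out as you claim. From your own display, the first sum is $-\tfrac12\sum_a\omega_c{}^a(X)e_a$ and the second is $\tfrac12\sum_b\omega_b{}^c(X)e_b$, which by the antisymmetry $\omega_b{}^c=-\omega_c{}^b$ equals $-\tfrac12\sum_b\omega_c{}^b(X)e_b$; the two sums are indeed equal, but their total is $-\sum_a\omega_c{}^a(X)e_a$, not $+\sum_a\omega_c{}^a(X)e_a$. Thus, reading (\ref{spicon}) literally together with the convention $\nabla^{p.h.}e_A=\omega_A{}^Be_B$ of (\ref{Con_r}), your computation produces (\ref{leifor}) with the wrong sign on the $(\nabla_X^{p.h.}Y)$ term. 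The identity you need, $[\omega_\sigma(X),e_c]=\sum_a\omega_c{}^a(X)e_a$, holds precisely when the spin connection form carries the first index of the connection form on the first Clifford factor, i.e. $\omega_\sigma(X)=\tfrac14\sum_{a,b}\omega_a{}^b(X)\,e_ae_b=\tfrac14\sum_{a,b}\big\langle\nabla_X^{p.h.}e_a,e_b\big\rangle e_ae_b$, which is the standard expression and evidently what (\ref{spicon}) is intended to mean (the transposed index order as printed differs from this exactly by the sign your computation detects). So either state this form of $\omega_\sigma$ explicitly and redo the last step, or record the index convention you are using; as written, the conclusion ``giving $\sum_a\omega_c{}^a(X)e_a$'' does not follow from the preceding display. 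With that correction the proof is complete.
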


\begin{proof}
Let $\omega =\{\omega _{\sigma }\ |\ \sigma \ \text{is a local section of}\
Spin(\xi )\}$. Then locally for $Y=(\sigma ,v),$ $\varphi =(\sigma ,f),$ $%
v\in \mathbb{R}^{2n},$ $f\in \Lambda _{\mathbb{C}}^{\ast }(n)$ we compute%
\begin{equation*}
\begin{split}
\nabla _{X}(Y\cdot \varphi )& =\nabla _{X}\big((\sigma ,v)\cdot (\sigma ,f)%
\big) \\
& =\nabla _{X}(\sigma ,v\cdot f) \\
& =\big(\sigma ,X(v\cdot f)+\omega _{\sigma }(X)(v\cdot f)\big) \\
& =\big(\sigma ,(Xv)\cdot f+v\cdot (Xf)+\omega _{\sigma }(X)(v\cdot f)\big)
\end{split}%
\end{equation*}%
where%
\begin{equation*}
\begin{split}
\omega _{\sigma }(X)(v\cdot f)& =\frac{d}{dt}\Big|_{t=0}\Big(g(t)(v\cdot f)%
\Big),\ \text{here}\ \frac{d}{dt}\Big|_{t=0}g(t)=\omega _{\sigma }(X),\
g(0)=id \\
& =\frac{d}{dt}\Big|_{t=0}\Big((g(t)v)\cdot f\Big) \\
& =\frac{d}{dt}\Big|_{t=0}\Big((g(t)vg(t)^{-1}g(t))\cdot f\Big) \\
& =\frac{d}{dt}\Big|_{t=0}\Big((g(t)vg(t)^{-1}\Big)\cdot f+v\cdot \frac{d}{dt%
}\Big|_{t=0}\Big(g(t))\cdot f\Big) \\
& =(\omega _{\sigma }(X)v-v\omega _{\sigma }(X))\cdot f+v\cdot (\omega
_{\sigma }(X)\cdot f) \\
& =(\omega ^{p.h.}(X)v)\cdot f+v\cdot (\omega _{\sigma }(X)\cdot f.
\end{split}%
\end{equation*}%
(\ref{leifor}) follows.
\end{proof}

The invariant second derivative $\nabla _{V,W}^{2}:\Gamma (\mathbb{S}%
)\rightarrow \Gamma (\mathbb{S})$ is defined by $\nabla _{V,W}^{2}\varphi
:=\nabla _{V}\nabla _{W}\varphi -\nabla _{\nabla _{V}^{p.h.}W}\varphi ,\
\varphi \in \Gamma (\mathbb{S}).$ Here $\nabla _{V}^{p.h.}W$ denotes the
covariant derivative of $W$ along $V$ with respect to the pseudohermitian
connection $\nabla ^{p.h.}$. Recall that $R_{e_{a}e_{b}}$ denotes the
curvature operator with respect to the spin connection $\nabla $.

\begin{lemma}
With the notations above, it holds that on $\Gamma (\mathbb{S})$%
\begin{equation}
\begin{split}
\nabla _{e_{\alpha },e_{n+\alpha }}^{2}-\nabla _{e_{n+\alpha },e_{\alpha
}}^{2}& =R_{e_{\alpha }e_{n+\alpha }}-2\nabla _{T},\ \ \text{for all}\ 1\leq
\alpha \leq n, \\
\nabla _{e_{n+\alpha },e_{\alpha }}^{2}-\nabla _{e_{\alpha },e_{n+\alpha
}}^{2}& =R_{e_{n+\alpha }e_{\alpha }}+2\nabla _{T},\ \ \text{for all}\ 1\leq
\alpha \leq n, \\
\nabla _{e_{a},e_{b}}^{2}-\nabla _{e_{b},e_{a}}^{2}& =R_{e_{a}e_{b}},\ \ 
\text{otherwise}.
\end{split}
\label{ide8}
\end{equation}
\end{lemma}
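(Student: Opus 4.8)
The plan is to compute directly the difference of the second covariant derivatives of spinor fields and extract the curvature together with the anomalous term coming from the non-integrability of the contact distribution. Recall that for $X,Y\in\xi$ the torsion of the pseudohermitian connection on $\xi$ is not zero: one has $\nabla^{p.h.}_X Y-\nabla^{p.h.}_Y X-[X,Y]_\xi = -\,$(something proportional to $d\theta(X,Y)\,T$)$,$ and more precisely $[X,Y]$ has a $T$-component equal to $-d\theta(X,Y)\,T = -2\,L_\theta(X,JY)\,T$. For the pair $X=e_\alpha$, $Y=e_{n+\alpha}=Je_\alpha$ this $T$-component is normalized (by the choice of Levi metric and admissible coframe) so that $[e_\alpha,e_{n+\alpha}]$ has $T$-part equal to $2T$ (up to sign conventions), while for all other pairs $(e_a,e_b)$ the bracket stays tangent to $\xi$.

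First I would write out, for arbitrary $V,W\in\xi$,
\begin{equation*}
\nabla^2_{V,W}\varphi-\nabla^2_{W,V}\varphi
=\nabla_V\nabla_W\varphi-\nabla_W\nabla_V\varphi-\nabla_{\nabla^{p.h.}_V W-\nabla^{p.h.}_W V}\varphi.
\end{equation*}
Since $\nabla^{p.h.}$ is the pseudohermitian connection and $\nabla$ is the spin connection built from it, the spin connection is still ``metric'' and the bracket term can be rewritten using $\nabla^{p.h.}_V W-\nabla^{p.h.}_W V=[V,W]+(\text{torsion and }T\text{-terms})$. The curvature operator of $\nabla$ on spinors is by definition $R_{VW}=\nabla_V\nabla_W-\nabla_W\nabla_V-\nabla_{[V,W]}$, where here $\nabla_{[V,W]}$ has to be interpreted by splitting $[V,W]=[V,W]_\xi+\langle\theta,[V,W]\rangle T$, and $\nabla_T$ is the spin connection derivative in the Reeb direction. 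Collecting terms, the difference $\nabla^2_{V,W}-\nabla^2_{W,V}-R_{VW}$ equals $-\langle\theta,[V,W]\rangle\nabla_T$ plus a term built from the pseudohermitian torsion pairing that, for the specific frame pairs in question, vanishes because the torsion is purely off-diagonal in the $(e_\alpha,e_{n+\alpha})$ indexing in the relevant contractions.

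Then I would specialize: for $V=e_\alpha$, $W=e_{n+\alpha}$ the $T$-component of $[e_\alpha,e_{n+\alpha}]$ is (with the normalization $d\theta=2\sum_\beta\theta^\beta\wedge\theta^{n+\beta}$, i.e. $d\theta(e_\beta,e_{n+\beta})=2$ and all torsion contributions cancelling in this antisymmetrized expression) equal to $-2$, which produces the $-2\nabla_T$ term; swapping the order flips the sign, giving $+2\nabla_T$; and for any other pair the $T$-component is zero, so we recover the plain curvature identity. This gives exactly the three lines of (\ref{ide8}).

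The main obstacle I anticipate is keeping the sign and normalization conventions straight — in particular the precise normalization of $d\theta$ relative to the Levi metric $L_\theta$ and the resulting coefficient ``$2$'' on $\nabla_T$, and making sure that the pseudohermitian torsion terms genuinely drop out of the antisymmetrized combination rather than contributing a spurious Clifford term. This is handled by expanding $[V,W]$ using the structure equations for the admissible coframe $\{\theta^\beta\}$ dual to $\{e_\beta\}$, noting that $d\theta^\beta$ and the connection $1$-forms $\omega_a{}^b$ encode the torsion in their $T$-direction (the torsion tensor $A_{\beta\gamma}$) antisymmetrically in a way that cancels when one forms $\nabla^{p.h.}_V W-\nabla^{p.h.}_W V-[V,W]_\xi$ and then contracts against the two chosen frame directions. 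Everything else is a routine unwinding of definitions of $\nabla^2$, $R$, and $\nabla_T$ on spinor sections.
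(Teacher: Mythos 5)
Your proposal is correct and is essentially the paper's own argument: the paper simply unwinds the definition $\nabla^{2}_{e_a,e_b}-\nabla^{2}_{e_b,e_a}=\nabla_{e_a}\nabla_{e_b}-\nabla_{e_b}\nabla_{e_a}-\nabla_{\nabla^{p.h.}_{e_a}e_b-\nabla^{p.h.}_{e_b}e_a}$ and invokes the torsion identities (\ref{ide2}) (equivalently, that among horizontal fields the pseudohermitian torsion is purely in the Reeb direction, with $\mathbb{T}(e_\beta,e_{n+\beta})=2T$), which is exactly your bracket/$T$-component bookkeeping. The only blemish is the parenthetical sign identity $-d\theta(X,Y)=-2L_\theta(X,JY)$, which under the paper's convention $L_\theta(X,Y)=\tfrac12 d\theta(X,JY)$ should read $-d\theta(X,Y)=+2L_\theta(X,JY)$; your working normalization $d\theta(e_\beta,e_{n+\beta})=2$ and $T$-part $-2$ is nevertheless the correct one, so the conclusion stands.
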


\begin{proof}
By the definition, we have $\nabla _{e_{a},e_{b}}^{2}-\nabla
_{e_{b},e_{a}}^{2}$ $=$ $\nabla _{e_{a}}\nabla _{e_{b}}$ $-$ $\nabla
_{e_{b}}\nabla _{e_{a}}$ $-$ $\nabla _{\nabla _{e_{a}}^{p.h.}e_{b}-\nabla
_{e_{b}}^{p.h.}e_{a}}.$ This, together with (\ref{ide2}), yields (\ref{ide8}%
).
\end{proof}

\subsection{Weitzenbock-type formula}

We define the contact Dirac operator $D_{\xi }:\Gamma (\mathbb{S}^{\pm
})\rightarrow \Gamma (\mathbb{S}^{\mp })$ by%
\begin{equation}
D_{\xi }{\phi }:=\sum_{a=1}^{2n}e_{a}\cdot \nabla _{e_{a}}\phi  \label{Dxi}
\end{equation}

\begin{theorem}[\textbf{{Weitzenbock-type Formula}}]
In the preceding notations, it holds that%
\begin{equation}
D_{\xi }^{2}=\nabla ^{\ast }\nabla +W-2\sum_{\beta =1}^{n}e_{\beta
}e_{n+\beta }\nabla _{T}  \label{WF}
\end{equation}%
where $W$ is the Tanaka-Webster scalar curvature (see the Appendix for an
explanation).
\end{theorem}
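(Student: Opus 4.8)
The plan is to run a Lichnerowicz-type computation for the contact Dirac operator $D_{\xi}$ of (\ref{Dxi}), built from the spin connection $\nabla$ of (\ref{spicon})--(\ref{spicon-1}), while keeping careful track of the fact that the Tanaka--Webster connection $\nabla^{p.h.}$ has torsion; that torsion is precisely what produces the last term of (\ref{WF}).

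First I would expand $D_{\xi}^{2}$ in a local adapted orthonormal frame $\{e_{a}\}$ of $\xi$ with $e_{n+\beta}=Je_{\beta}$. Using (\ref{Dxi}) and the Leibniz rule (\ref{leifor}),
\begin{equation*}
D_{\xi}^{2}\phi=\sum_{a,b}e_{a}\cdot\nabla_{e_{a}}(e_{b}\cdot\nabla_{e_{b}}\phi)=\sum_{a,b}e_{a}\cdot(\nabla^{p.h.}_{e_{a}}e_{b})\cdot\nabla_{e_{b}}\phi+\sum_{a,b}e_{a}e_{b}\cdot\nabla_{e_{a}}\nabla_{e_{b}}\phi .
\end{equation*}
In the second sum I write $\nabla_{e_{a}}\nabla_{e_{b}}\phi=\nabla^{2}_{e_{a},e_{b}}\phi+\nabla_{\nabla^{p.h.}_{e_{a}}e_{b}}\phi$ and expand $\nabla^{p.h.}_{e_{a}}e_{b}=\sum_{c}\langle\nabla^{p.h.}_{e_{a}}e_{b},e_{c}\rangle e_{c}$ in both the first sum and the new $\nabla_{\nabla^{p.h.}_{e_{a}}e_{b}}$ term: these two cancel, because $\nabla^{p.h.}$ preserves the Levi metric (so $\langle\nabla^{p.h.}_{e_{a}}e_{b},e_{c}\rangle=-\langle\nabla^{p.h.}_{e_{a}}e_{c},e_{b}\rangle$) and swapping the dummy indices $b,c$ turns one into minus the other. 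Hence $D_{\xi}^{2}\phi=\sum_{a,b}e_{a}e_{b}\cdot\nabla^{2}_{e_{a},e_{b}}\phi$, and separating the diagonal part ($e_{a}e_{a}=-1$) from the off-diagonal part (pairing $(a,b)$ with $(b,a)$ and using $e_{a}e_{b}=-e_{b}e_{a}$) gives
\begin{equation*}
D_{\xi}^{2}\phi=-\sum_{a}\nabla^{2}_{e_{a},e_{a}}\phi+\sum_{a<b}e_{a}e_{b}\cdot\big(\nabla^{2}_{e_{a},e_{b}}-\nabla^{2}_{e_{b},e_{a}}\big)\phi .
\end{equation*}
The first term is $\nabla^{\ast}\nabla\phi$ by definition of the connection Laplacian. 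Into the second I substitute (\ref{ide8}) term by term: each pair contributes $e_{a}e_{b}\cdot R_{e_{a}e_{b}}\phi$, and precisely the $n$ pairs $(e_{\beta},e_{n+\beta})$ contribute in addition $-2\,e_{\beta}e_{n+\beta}\cdot\nabla_{T}\phi$. So $D_{\xi}^{2}\phi=\nabla^{\ast}\nabla\phi+\mathfrak{R}\phi-2\sum_{\beta=1}^{n}e_{\beta}e_{n+\beta}\cdot\nabla_{T}\phi$, where $\mathfrak{R}:=\sum_{a<b}e_{a}e_{b}\cdot R_{e_{a}e_{b}}$, and it remains only to establish the curvature identity $\mathfrak{R}\phi=W\phi$.

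For that identity I would insert (\ref{currep}) to get $\mathfrak{R}=\frac18\sum_{a,b,c,d}R_{abcd}\,e_{a}e_{b}e_{c}e_{d}$ with $R_{abcd}=\langle R^{p.h.}_{e_{a}e_{b}}e_{c},e_{d}\rangle$, and split $e_{a}e_{b}e_{c}e_{d}$ according to how many of $a,b,c$ coincide. When $a,b,c$ are pairwise distinct the product is totally antisymmetric in them, so one may replace $R_{abcd}$ by its cyclic antisymmetrization in $a,b,c$, which by the first Bianchi identity for $\nabla^{p.h.}$ is a torsion expression built from the vertical torsion $d\theta\otimes T$ and the Webster torsion $\tau$; the remaining terms (two of $a,b,c$ equal) collapse, after one Clifford contraction, to Ricci-type contractions of $R^{p.h.}$. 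This bookkeeping is where I expect the real work: because $\nabla^{p.h.}$ has torsion, both the Bianchi correction and the skew-symmetric part of the Webster--Ricci contraction a priori survive, unlike in the Kähler or Riemannian case. I would dispose of them using the structural properties of $\tau$ (symmetric with respect to $L_{\theta}$, trace-free, and anticommuting with $J$): the torsion terms assemble into combinations of $\sum_{c,d}\tau_{cd}\,e_{c}e_{d}$ and $\sum_{p}(Je_{p})\cdot\tau(e_{p})$, both of which vanish --- the former equals $-\operatorname{tr}\tau=0$, and the latter vanishes because $J\tau$ is symmetric (so its contraction against the skew form $e_{c}\wedge e_{d}$ is zero) while $\operatorname{tr}(J\tau)=0$; the skew part of the Ricci contraction is handled the same way. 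What survives is the symmetric Ricci trace $\sum_{a,b}R_{abba}$, which is $W$ in the normalization of (\ref{Wsc}), so $\mathfrak{R}\phi=W\phi$ and (\ref{WF}) follows. Carrying out this last curvature computation in a complex pseudohermitian coframe, using the structure equations, rather than in the real frame above, should make it considerably shorter.
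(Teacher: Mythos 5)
Your proposal is correct and follows essentially the same route as the paper: expand $D_{\xi}^{2}$ into $\nabla^{\ast}\nabla+\mathcal{R}-2\sum_{\beta}e_{\beta}e_{n+\beta}\nabla_{T}$ using the commutation identities (\ref{ide8}), then identify $\mathcal{R}=W$ via (\ref{currep}), the torsion Bianchi identity (\ref{ide4}), and the vanishing of the torsion contributions by the symmetry (and tracelessness) of the Webster torsion, exactly as in the paper's computation (\ref{biavan1})--(\ref{biavan3}). The only differences are cosmetic: you cancel the connection-coefficient terms directly by metric compatibility instead of choosing a frame with $(\nabla e_{a})_{x}=0$, and you sketch rather than carry out the final Clifford bookkeeping, but the structure you describe is the one that makes it work.
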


\begin{proof}
Fix $x\in M$ and choose a local orthonormal frame field $\{e_{1},\cdots
e_{2n}\}$ such that $e_{n+\alpha }=Je_{\alpha },\ 1\leq \alpha \leq n$ and $%
(\nabla e_{a})_{x}=0$ for all $1\leq a\leq 2n$. Then from (\ref{Dxi}) we
have at $x$ 
\begin{equation}
\begin{split}
D_{\xi }^{2}& =\sum_{a,b=1}^{2n}e_{a}\cdot \nabla _{e_{a}}(e_{b}\cdot \nabla
_{e_{b}}) \\
& =\sum_{a,b=1}^{2n}e_{a}\cdot \big((\nabla _{e_{a}}e_{b})\cdot \nabla
_{e_{b}}+e_{b}\cdot \nabla _{e_{a}}\nabla _{e_{b}}\big) \\
& =\sum_{a,b=1}^{2n}e_{a}e_{b}\cdot \nabla _{e_{a}}\nabla _{e_{b}} \\
& =\sum_{a,b=1}^{2n}e_{a}e_{b}\cdot \nabla _{e_{a},{e_{b}}}^{2} \\
& =-\sum_{a=1}^{2n}\nabla _{e_{a},{e_{a}}}^{2}+\sum_{a<b}e_{a}e_{b}\cdot
(\nabla _{e_{a},{e_{b}}}^{2}-\nabla _{e_{b},{e_{a}}}^{2}) \\
& =\nabla ^{\ast }\nabla +\mathcal{R}-2\sum_{\alpha =1}^{n}e_{\alpha
}e_{n+\alpha }\nabla _{T}
\end{split}%
\end{equation}%
where 
\begin{equation*}
\mathcal{R}=\frac{1}{2}\sum_{a,b=1}^{2n}e_{a}e_{b}\cdot R_{e_{a}e_{b}}.
\end{equation*}%
To complete the proof, we need to show that $\mathcal{R}=W$, the
Tanaka-Webster scalar curvature. We compute%
\begin{equation}
\begin{split}
\mathcal{R}& =\frac{1}{2}\sum_{a,b=1}^{2n}e_{a}e_{b}\cdot R_{e_{a}e_{b}} \\
& =\frac{1}{8}%
\sum_{a,b,c,d=1}^{2n}<R_{e_{a},e_{b}}^{p.h.}(e_{c}),e_{d}>e_{a}e_{b}e_{c}e_{d},\ \ 
\text{by}\ (\ref{currep}), \\
& =\frac{1}{8}\sum_{d=1}^{2n}\left\{ \frac{1}{3}\Big(\sum_{a,b,c\text{\
distinct}%
}<R_{e_{a},e_{b}}^{p.h.}(e_{c})+R_{e_{b},e_{c}}^{p.h.}(e_{a})+R_{e_{c},e_{a}}^{p.h.}(e_{b}),e_{d}>e_{a}e_{b}e_{c}%
\Big)\right. \\
& \ \ \ \ \ \ \ \ \ \ \ \ \ \left.
+\sum_{a,b=1}^{2n}<R_{e_{a},e_{b}}^{p.h.}(e_{a}),e_{d}>e_{a}e_{b}e_{a}+%
\sum_{a,b=1}^{2n}<R_{e_{a},e_{b}}^{p.h.}(e_{b}),e_{d}>e_{a}e_{b}e_{b}\right%
\} e_{d}.
\end{split}
\label{biavan}
\end{equation}%
We claim that 
\begin{equation}
\sum_{d=1}^{2n}\Big(\sum_{a,b,c\text{\ distinct}%
}<R_{e_{a},e_{b}}^{p.h.}(e_{c})+R_{e_{b},e_{c}}^{p.h.}(e_{a})+R_{e_{c},e_{a}}^{p.h.}(e_{b}),e_{d}>e_{a}e_{b}e_{c}%
\Big)e_{d}=0.  \label{biavan1}
\end{equation}%
Substituting (\ref{biavan1}) into (\ref{biavan}) gives 
\begin{equation}
\begin{split}
\mathcal{R}& =\frac{1}{4}%
\sum_{a,b,d=1}^{2n}<R_{e_{a},e_{b}}^{p.h.}(e_{a}),e_{d}>e_{b}e_{d} \\
& =-\frac{1}{4}\sum_{b,d=1}^{2n}Ric^{p.h.}(e_{b},e_{d})e_{b}e_{d} \\
& =-\frac{1}{4}\sum_{b=1}^{2n}Ric^{p.h.}(e_{b},e_{b})e_{b}e_{b}-\frac{1}{4}%
\sum_{b<d}Ric^{p.h.}(e_{b},e_{d})e_{b}e_{d}-\frac{1}{4}%
\sum_{b>d}Ric^{p.h.}(e_{b},e_{d})e_{b}e_{d} \\
& =\frac{1}{4}\sum_{b=1}^{2n}Ric^{p.h.}(e_{b},e_{b})=\frac{1}{4}R\ (\text{%
scalar curvature of }\nabla ^{p.h.}) \\
& =W\ (\text{Tanaka-Webster scalar curvature})\text{ (by (\ref{SC}) in the
Appendix)}.
\end{split}%
\end{equation}%
Finally we come back to prove the claim (\ref{biavan1}). From (\ref{ide4})
in the Appendix, it follows that 
\begin{equation}
\begin{split}
&
<R_{e_{a},e_{b}}^{p.h.}(e_{c})+R_{e_{b},e_{c}}^{p.h.}(e_{a})+R_{e_{c},e_{a}}^{p.h.}(e_{b}),e_{d}>
\\
=& <\mathbb{T}(e_{a},[e_{b},e_{c}])+\mathbb{T}(e_{b},[e_{c},e_{a}])+\mathbb{T%
}(e_{c},[e_{a},e_{b}]),e_{d}>.
\end{split}
\label{biavan2}
\end{equation}%
By formulae (\ref{ide1}) and (\ref{ide2}) and noting that $a,b,c$ are
distinct, we observe that the right hand side of (\ref{biavan2}) does not
vanish if and only if $\{a,b,c\}$ contains a pair $\{\beta ,n+\beta \}$ for
some $\beta $. That is, $\{a,b,c\}=\{\beta ,n+\beta ,\gamma \}\ $or$\
\{\beta ,n+\beta ,n+\gamma \}$ for some $\beta ,\gamma $ with $1\leq \beta
,\gamma \leq n$ and $\beta \neq \gamma $. For example, $\mathbb{T}(e_{\beta
},[e_{n+\beta },e_{\gamma }])+\mathbb{T}(e_{n+\beta },[e_{\gamma },e_{\beta
}])+\mathbb{T}(e_{\gamma },[e_{\beta },e_{n+\beta }])=-2\mathbb{T}(e_{\gamma
},T)\ $mod$\ T.$ Therefore, substituting (\ref{biavan2}), together with (\ref%
{ide2}), into the left hand side of (\ref{biavan1}), we have 
\begin{equation}
\begin{split}
& \sum_{d=1}^{2n}\Big(\sum_{a,b,c\text{\ distinct}%
}<R_{e_{a},e_{b}}^{p.h.}(e_{c})+R_{e_{b},e_{c}}^{p.h.}(e_{a})+R_{e_{c},e_{a}}^{p.h.}(e_{b}),e_{d}>e_{a}e_{b}e_{c}%
\Big)e_{d} \\
=& 6\sum_{d=1}^{2n}\sum_{\beta ,\gamma =1;\beta \neq \gamma }^{n}\Big(<%
\mathbb{T}(e_{r},T),e_{d}>e_{\beta }e_{n+\beta }e_{r}e_{d}+<\mathbb{T}%
(e_{n+r},T),e_{d}>e_{\beta }e_{n+\beta }e_{n+r}e_{d}\Big) \\
=& 6\sum_{\alpha =1}^{n}\sum_{\beta ,\gamma =1;\beta \neq \gamma }^{n}\Big(%
-ReA^{\bar{\alpha}}{}_{\gamma }e_{\beta }e_{n+\beta }e_{\gamma }e_{\alpha
}+ImA^{\bar{\alpha}}{}_{\gamma }e_{\beta }e_{n+\beta }e_{\gamma }e_{n+\alpha
} \\
& \ \ \ \ \ \ \ \ \ \ \ \ \ \ \ \ \ \ \ \ \ \ +ImA^{\bar{\alpha}}{}_{\gamma
}e_{\beta }e_{n+\beta }e_{n+\gamma }e_{\alpha }+ReA^{\bar{\alpha}}{}_{\gamma
}e_{\beta }e_{n+\beta }e_{n+\gamma }e_{n+\alpha }\Big) \\
=& 6\sum_{\alpha ,\beta ,\gamma =1}^{n}\Big(-ReA^{\bar{\alpha}}{}_{\gamma
}e_{\beta }e_{n+\beta }e_{\gamma }e_{\alpha }+ImA^{\bar{\alpha}}{}_{\gamma
}e_{\beta }e_{n+\beta }e_{\gamma }e_{n+\alpha } \\
& \ \ \ \ \ \ \ \ \ \ \ \ \ \ \ \ \ \ \ \ \ \ +ImA^{\bar{\alpha}}{}_{\gamma
}e_{\beta }e_{n+\beta }e_{n+\gamma }e_{\alpha }+ReA^{\bar{\alpha}}{}_{\gamma
}e_{\beta }e_{n+\beta }e_{n+\gamma }e_{n+\alpha }\Big) \\
=& 6\sum_{\beta =1}^{n}e_{\beta }e_{n+\beta }\Big(\sum_{\alpha ,\gamma
=1}^{n}-ReA^{\bar{\alpha}}{}_{\gamma }e_{\gamma }e_{\alpha }+ImA^{\bar{\alpha%
}}{}_{\gamma }e_{\gamma }e_{n+\alpha }+ImA^{\bar{\alpha}}{}_{\gamma
}e_{n+\gamma }e_{\alpha } \\
& \ \ \ \ \ \ \ \ \ \ \ \ \ \ \ \ \ \ \ \ \ \ \ \ +ReA^{\bar{\alpha}%
}{}_{\gamma }e_{n+\gamma }e_{n+\alpha }\Big) \\
=& 0
\end{split}
\label{biavan3}
\end{equation}%
where, for the third and fourth equalities in (\ref{biavan3}) we have used
the symmetry of pseudohermitian torsion $A_{\alpha \beta }$ and the basic
relation $e_{a}e_{b}+e_{b}e_{a}=-2\delta _{ab}$ .This completes the proof of
the claim (\ref{biavan1}) and hence the proof of the theorem.
\end{proof}

\begin{corollary}
\label{WFn2} (Weizenbock-type formula for $n=2$) In the preceding notations,
suppose further $n=2$ (dimension $=5$). Then it holds that 
\begin{equation}
D_{\xi }^{2}=\nabla ^{\ast }\nabla +W\text{ \ on }\Gamma (\mathbb{S}^{+})
\label{WF5D}
\end{equation}%
where $W$ is the Tanaka-Webster scalar curvature.
\end{corollary}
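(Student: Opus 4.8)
The plan is to read off Corollary \ref{WFn2} directly from the general Weitzenbock-type formula (\ref{WF}) together with the algebraic identity (\ref{keyfor}). Since $D_{\xi}$ interchanges $\Gamma(\mathbb{S}^{+})$ and $\Gamma(\mathbb{S}^{-})$, the operator $D_{\xi}^{2}$ preserves $\Gamma(\mathbb{S}^{+})$; likewise $\nabla^{\ast}\nabla$ and multiplication by the scalar function $W$ preserve $\Gamma(\mathbb{S}^{+})$. Hence it suffices to show that the remaining term $-2\sum_{\beta=1}^{n}e_{\beta}e_{n+\beta}\nabla_{T}$ in (\ref{WF}) acts as zero on $\Gamma(\mathbb{S}^{+})$ when $n=2$.

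First I would check that $\nabla_{T}$ preserves the $\mathbb{Z}_{2}$-grading $\mathbb{S}=\mathbb{S}^{+}\oplus\mathbb{S}^{-}$. This is immediate from the definition (\ref{spicon}) of the connection form $\omega_{\sigma}=\frac{1}{4}\sum_{a,b}\omega_{b}{}^{a}e_{a}e_{b}$: each product $e_{a}e_{b}$ lies in the even part of the Clifford algebra and therefore maps $S^{+}(2n)$ to $S^{+}(2n)$ and $S^{-}(2n)$ to $S^{-}(2n)$, so the same holds for the associated parallel transport and hence for the induced spin connection $\nabla$ in every direction, in particular along $T$. Consequently, for any $\psi\in\Gamma(\mathbb{S}^{+})$ we have $\nabla_{T}\psi\in\Gamma(\mathbb{S}^{+})$.

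Next, with $n=2$ and the adapted orthonormal frame $e_{1},e_{2},e_{3}=Je_{1},e_{4}=Je_{2}$ used in the derivation of (\ref{WF}) (the same convention $e_{n+\beta}=Je_{\beta}$ under which Lemma \ref{keyfor} was proved), I would apply (\ref{keyfor}) pointwise to the positive spinor field $\nabla_{T}\psi$, obtaining $\sum_{\beta=1}^{2}e_{\beta}e_{2+\beta}\,\nabla_{T}\psi=0$. Therefore the last term of (\ref{WF}) vanishes identically on $\Gamma(\mathbb{S}^{+})$, and (\ref{WF}) collapses to $D_{\xi}^{2}=\nabla^{\ast}\nabla+W$ on $\Gamma(\mathbb{S}^{+})$, which is (\ref{WF5D}).

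There is essentially no serious obstacle here; the corollary is a direct specialization of the Weitzenbock-type formula. The only points needing care are (i) verifying that $\nabla_{T}$ respects the splitting into positive and negative spinors, so that (\ref{keyfor}) is applicable to $\nabla_{T}\psi$, and (ii) making sure the frame normalization in (\ref{WF}) matches the one under which (\ref{keyfor}) holds, namely $e_{n+\beta}=Je_{\beta}$; both are already built into the constructions recalled above. Equivalently, one may phrase the argument bundle-theoretically: $\sum_{\beta=1}^{n}e_{\beta}e_{n+\beta}$ defines an endomorphism of $\mathbb{S}$ that, by Lemma \ref{keyfor}, annihilates the subbundle $\mathbb{S}^{+}$ when $n=2$, and precomposing it with $\nabla_{T}$ (which leaves $\mathbb{S}^{+}$ invariant) still annihilates $\Gamma(\mathbb{S}^{+})$.
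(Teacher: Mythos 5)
Your proposal is correct and follows exactly the paper's route: the paper's own proof is the one-line observation that (\ref{WF5D}) follows from (\ref{WF}) and (\ref{keyfor}). Your additional checks — that $\nabla_{T}$ preserves the splitting $\mathbb{S}=\mathbb{S}^{+}\oplus\mathbb{S}^{-}$ (the connection form being even in the Clifford algebra) and that the frame convention $e_{n+\beta}=Je_{\beta}$ matches — are just careful spellings-out of what the paper leaves implicit.
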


\begin{proof}
(\ref{WF5D}) \ follows from (\ref{WF}) and $(\ref{keyfor}).$
\end{proof}

\section{Asymptotically flat pseudohermitian manifolds}

Recall the standard coframe $\{\mathring{\theta},\sqrt{2}dz^{\beta },\sqrt{2}%
dz^{\bar{\beta}}\}$ for the Heisenberg group $H_{n}$ (see the Appendix),
where $\mathring{\theta}$ $=$ $dt$ $+$ $iz^{\beta }dz^{\bar{\beta}}$ $-$ $%
iz^{\bar{\beta}}dz^{\beta }$ (cf. (\ref{A1})). Let $B_{\rho _{0}}$ denote
the Heisenberg ball of radius $\rho _{0}$, i.e. $\{|z|^{4}+t^{2}$ $<$ $\rho
_{0}^{4}\}$ where $|z|^{2}$ $=$ $\sum_{\beta =1}^{n}|z^{\beta }|^{2}$ (cf. (%
\ref{A2})).

\begin{definition}
A ($2n+1)$-dimensional pseudohermitian manifold $(N,J,\theta )$ is said to
be \textbf{asymptotically flat pseudohermitian} if $N=N_{0}\cup N_{\infty }$%
, with $N_{0}$ compact and $N_{\infty }$ diffemorphic to $H_{n}\setminus
B_{\rho _{0}}$ in which $(J,\theta )$ is close to $(\mathring{J},\mathring{%
\theta})$ in the sense that 
\begin{equation}
\begin{split}
\theta & =\big(1+c_{n}A\rho ^{-2n}+O(\rho ^{-2n-1})\big)\mathring{\theta}%
+O(\rho ^{-2n-1})_{\beta }dz^{\beta }+O(\rho ^{-2n-1})_{\bar{\beta}}dz^{\bar{%
\beta}}; \\
\theta ^{\alpha }& =O(\rho ^{-2n-1})\mathring{\theta}+O(\rho ^{-2n-2})_{\bar{%
\beta}}{}^{\alpha }dz^{\bar{\beta}}+\big(1+\tilde{c}_{n}A\rho ^{-2n}+O(\rho
^{-2n-1})\big)\sqrt{2}dz^{\alpha },
\end{split}
\label{af1}
\end{equation}%
for some $A\in \mathbb{R}$ and a unitary coframe $\theta ^{\alpha }$ in
coordinates $(z^{\beta },$ $z^{\bar{\beta}},$ $t)$ (called asymptotic
coordinates) for $N_{\infty }$ on which $\rho $ $=$ $((\sum_{\beta
=1}^{n}|z^{\beta }|^{2})^{2}+t^{2})^{1/4}$ $=$ $(|z|^{4}+t^{2})^{1/4}$ is
defined. We also require the Tanaka-Webster scalar curvature $W\in L^{1}(N)$.
\end{definition}

\begin{remark}
For the case of blowing up through the Green's function (see Subsection \ref%
{blup}.), we have $c_{n}=\frac{4\pi }{na_{n}}$ (see (\ref{exofgr}) for the
definition of $a_{n}$) and $\tilde{c}_{n}=\frac{2\pi }{na_{n}}$, where $%
a_{1}=1$.
\end{remark}

Let $\{Z_{\alpha },Z_{\bar{\alpha}},T\}$ be the frame dual to $\{\theta
^{\alpha },\theta ^{\bar{\alpha}},\theta \}$. It follows from (\ref{af1})
that 
\begin{equation}
\begin{split}
Z_{\alpha }& =\left( 1-\tilde{c}_{n}A\rho ^{-2n}+O(\rho ^{-2n-1})\right) 
\mathring{Z}_{\alpha }+\sum_{\beta \neq \alpha }O(\rho ^{-2n-2})_{\alpha
}{}^{\beta }\mathring{Z}_{\beta } \\
& +\sum_{\beta =1}^{n}O(\rho ^{-2n-2})_{\alpha }{}^{\bar{\beta}}\mathring{Z}%
_{\bar{\beta}}+O(\rho ^{-2n-1})\mathring{T}.
\end{split}
\label{af2}
\end{equation}%
Writing 
\begin{equation}
\begin{split}
\theta & =a\mathring{\theta}+b_{\gamma }dz^{\gamma }+c_{\bar{\gamma}}dz^{%
\bar{\gamma}}; \\
\theta ^{\alpha }& =a^{\alpha }\mathring{\theta}+b\sqrt{2}dz^{\alpha }+c_{%
\bar{\gamma}}{}^{\alpha }dz^{\bar{\gamma}},
\end{split}
\label{af3}
\end{equation}%
where 
\begin{equation}
\begin{split}
a& =1+c_{n}A\rho ^{-2n}+O(\rho ^{-2n-1}), \\
b_{\gamma }& =O(\rho ^{-2n-1}),\ \ c_{\bar{\gamma}}=O(\rho ^{-2n-1})_{\bar{%
\gamma}}, \\
b& =1+\tilde{c}_{n}A\rho ^{-2n}+O(\rho ^{-2n-1}), \\
a^{\alpha }& =O(\rho ^{-2n-1})^{\alpha },\ \ c_{\bar{\gamma}}{}^{\alpha
}=O(\rho ^{-2n-2})_{\bar{\gamma}}{}^{\alpha }.
\end{split}
\label{af4}
\end{equation}%
Substituting (\ref{af3}) into the Levi equation $d\theta =i\delta _{\alpha
\beta }\theta ^{\alpha }\wedge \theta ^{\bar{\beta}}$ ((\ref{Levi}) with $%
h_{\alpha \bar{\beta}}=\delta _{\alpha \beta }$) gives 
\begin{equation}
a^{\alpha }=\frac{-c_{n}Anz^{\alpha }\omega }{\sqrt{2}\rho ^{2n+4}}+O(\rho
^{-2n-2})  \label{af5}
\end{equation}

\noindent in view of (\ref{af4}), where $\omega $ $=$ $t+i|z|^{2}$. Next we
would like to compute the asymptotic behavior of the connection forms $%
\theta _{\alpha }$ $^{\beta }$ and the torsion forms $\tau ^{\beta }$. Write 
\begin{equation}
\begin{split}
\theta _{\alpha }{}^{\beta }& =A_{\alpha }{}^{\beta }\mathring{\theta}%
+B_{\alpha }{}^{\beta }{}_{\gamma }dz^{\gamma }+C_{\alpha }{}^{\beta }{}_{%
\bar{\gamma}}dz^{\bar{\gamma}}; \\
\tau ^{\beta }& =A^{\beta }\mathring{\theta}+B^{\beta }{}_{\gamma
}dz^{\gamma }+C^{\beta }{}_{\bar{\gamma}}{}dz^{\bar{\gamma}}.
\end{split}
\label{af6}
\end{equation}

\begin{lemma}
With the notations above, it holds that in $N_{\infty },$ for the
coefficients of $\theta _{\alpha }$ $^{\beta }$%
\begin{equation}
A_{\alpha }{}^{\beta }=O(\rho ^{-2n-2}),\ \ \ B_{\alpha }{}^{\beta
}{}_{\gamma }=-\overline{C_{\beta }{}^{\alpha }{}_{\bar{\gamma}}};
\label{af7}
\end{equation}%
and for any fixed $\alpha ,$ 
\begin{equation}
\begin{split}
C_{\alpha }{}^{\beta }{}_{\bar{\gamma}}& =O(\rho ^{-2n-2}),\ \ \text{for}\
\beta \neq \alpha ,\ \gamma \neq \alpha \\
C_{\alpha }{}^{\beta }{}_{\bar{\alpha}}& =\frac{-inc_{n}Az^{\beta }\omega }{%
\rho ^{2n+4}}+O(\rho ^{-2n-2}),\ \ \text{for}\ \beta \neq \alpha \\
C_{\alpha }{}^{\alpha }{}_{\bar{\gamma}}& =\frac{-in\tilde{c}_{n}Az^{\gamma
}\omega }{\rho ^{2n+4}}+O(\rho ^{-2n-2}),\ \ \text{for}\ \gamma \neq \alpha
\\
C_{\alpha }{}^{\alpha }{}_{\bar{\alpha}}& =\frac{-in(\tilde{c}%
_{n}+c_{n})Az^{\alpha }\omega }{\rho ^{2n+4}}+O(\rho ^{-2n-2});
\end{split}
\label{af8}
\end{equation}%
(recall $\omega $ $=$ $t+i|z|^{2})$ and for the coefficients of $\tau
^{\beta },$ 
\begin{equation}
\begin{split}
A_{\bar{\beta}\bar{\gamma}}& =O(\rho ^{-2n-2}),\ \ \ A^{\beta }=A_{\bar{\beta%
}\bar{\gamma}}a^{\bar{\gamma}} \\
B^{\beta }{}_{\sigma }& =A_{\bar{\beta}\bar{\gamma}}\overline{c_{\bar{\sigma}%
}{}^{\gamma }},\ \ \ C^{\beta }{}_{\bar{\gamma}}=\sqrt{2}A_{\bar{\beta}\bar{%
\gamma}}b.
\end{split}
\label{af9}
\end{equation}
\end{lemma}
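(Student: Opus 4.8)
The plan is to read off all the asymptotics directly from the pseudohermitian structure equations of the Appendix, namely the first structure equation $d\theta ^{\beta }=\theta ^{\alpha }\wedge \theta _{\alpha }{}^{\beta }+\theta \wedge \tau ^{\beta }$ with $\tau ^{\beta }=A^{\beta }{}_{\bar{\gamma}}\theta ^{\bar{\gamma}}$ and the metric compatibility $\theta _{\alpha }{}^{\beta }=-\overline{\theta _{\beta }{}^{\alpha }}$ (recall $h_{\alpha \bar{\beta}}=\delta _{\alpha \beta }$, so raising and lowering indices is trivial); the Levi equation has already been used in (\ref{af5}). First I would record the elementary facts on $N_{\infty }$: from $\mathring{\theta}=dt+iz^{\gamma }dz^{\bar{\gamma}}-iz^{\bar{\gamma}}dz^{\gamma }$ one gets $d\mathring{\theta}=2i\,dz^{\gamma }\wedge dz^{\bar{\gamma}}$ and $dt=\mathring{\theta}-iz^{\gamma }dz^{\bar{\gamma}}+iz^{\bar{\gamma}}dz^{\gamma }$; writing $\omega =t+i|z|^{2}$, the crucial algebraic identity is $\rho ^{4}=|z|^{4}+t^{2}=\omega \bar{\omega}$, from which $d\omega =\mathring{\theta}+2iz^{\bar{\gamma}}dz^{\gamma }$ (note: no $dz^{\bar{\gamma}}$-term) and $4\rho ^{3}d\rho =2t\mathring{\theta}+2iz^{\bar{\gamma}}\bar{\omega}\,dz^{\gamma }-2iz^{\gamma }\omega \,dz^{\bar{\gamma}}$, hence $d(\rho ^{-m})=-\frac{m}{2}\rho ^{-m-4}\big(t\mathring{\theta}+iz^{\bar{\gamma}}\bar{\omega}\,dz^{\gamma }-iz^{\gamma }\omega \,dz^{\bar{\gamma}}\big)$. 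Together with the size bounds $|z|=O(\rho )$, $|\omega |=O(\rho ^{2})$, these control every differentiation below.

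The first computational step is to expand $d\theta ^{\beta }=da^{\beta }\wedge \mathring{\theta}+a^{\beta }d\mathring{\theta}+\sqrt{2}\,db\wedge dz^{\beta }+dc_{\bar{\gamma}}{}^{\beta }\wedge dz^{\bar{\gamma}}$ using (\ref{af3})--(\ref{af4}) and (\ref{af5}). Feeding in $a^{\beta }=-c_{n}Anz^{\beta }\omega /(\sqrt{2}\rho ^{2n+4})+O(\rho ^{-2n-2})$, $b=1+\tilde{c}_{n}A\rho ^{-2n}+O(\rho ^{-2n-1})$ and $c_{\bar{\gamma}}{}^{\beta }=O(\rho ^{-2n-2})$, one finds that modulo $O(\rho ^{-2n-2})$ the only surviving terms are $a^{\beta }d\mathring{\theta}=-\sqrt{2}\,inc_{n}A\rho ^{-2n-4}z^{\beta }\omega \sum _{\gamma }dz^{\gamma }\wedge dz^{\bar{\gamma}}$ and $\sqrt{2}\,db\wedge dz^{\beta }=-\sqrt{2}\,in\tilde{c}_{n}A\rho ^{-2n-4}\big(t\mathring{\theta}+iz^{\bar{\gamma}}\bar{\omega}\,dz^{\gamma }-iz^{\gamma }\omega \,dz^{\bar{\gamma}}\big)\wedge dz^{\beta }$; in particular both the $\mathring{\theta}\wedge dz^{\gamma }$- and the $\mathring{\theta}\wedge dz^{\bar{\gamma}}$-components of $d\theta ^{\beta }$ turn out to be $O(\rho ^{-2n-2})$ (here $\rho ^{4}=\omega \bar{\omega}$ is what makes the $a^{\beta }$-contributions to these components decay one power faster than naively expected).

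Next I would substitute the ansatz (\ref{af6}) into $d\theta ^{\beta }=\theta ^{\alpha }\wedge \theta _{\alpha }{}^{\beta }+\theta \wedge \tau ^{\beta }$, using $\theta ^{\alpha }\approx \sqrt{2}\,dz^{\alpha }$, $\theta \approx \mathring{\theta}$ and the bounds $a^{\alpha },b_{\gamma },c_{\bar{\gamma}}=O(\rho ^{-2n-1})$, $c_{\bar{\gamma}}{}^{\alpha }=O(\rho ^{-2n-2})$, and match the coefficients of the basis $2$-forms. The $dz^{\alpha }\wedge dz^{\bar{\gamma}}$-component of the right side is $\sqrt{2}\sum _{\alpha ,\gamma }C_{\alpha }{}^{\beta }{}_{\bar{\gamma}}\,dz^{\alpha }\wedge dz^{\bar{\gamma}}$ up to $O(\rho ^{-2n-2})$ (the term $\theta \wedge \tau ^{\beta }$ being of higher decay); comparing with the two surviving terms above and separating the cases $\alpha =\gamma =\beta $, $\alpha =\beta \neq \gamma $, $\alpha =\gamma \neq \beta $, and $\alpha \neq \gamma $ with $\alpha \neq \beta $ reproduces exactly the four lines of (\ref{af8}). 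Metric compatibility $\theta _{\alpha }{}^{\beta }=-\overline{\theta _{\beta }{}^{\alpha }}$ reads off as $A_{\alpha }{}^{\beta }=-\overline{A_{\beta }{}^{\alpha }}$ and $B_{\alpha }{}^{\beta }{}_{\gamma }=-\overline{C_{\beta }{}^{\alpha }{}_{\bar{\gamma}}}$, the second identity of (\ref{af7}). The $\mathring{\theta}\wedge dz^{\gamma }$-comparison gives $-\sqrt{2}\,A_{\gamma }{}^{\beta }$ plus strictly higher-decay terms on the right and $O(\rho ^{-2n-2})$ on the left, hence $A_{\alpha }{}^{\beta }=O(\rho ^{-2n-2})$, the first identity of (\ref{af7}). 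The $\mathring{\theta}\wedge dz^{\bar{\gamma}}$-comparison, using $\theta \wedge \tau ^{\beta }\approx \mathring{\theta}\wedge \tau ^{\beta }$ and discarding the connection contribution (which is negligible once $A_{\alpha }{}^{\beta }$ is known small), gives $C^{\beta }{}_{\bar{\gamma}}=O(\rho ^{-2n-2})$ to leading order. Finally, expanding $\theta ^{\bar{\gamma}}=\overline{a^{\gamma }}\mathring{\theta}+\sqrt{2}\,\overline{b}\,dz^{\bar{\gamma}}+\overline{c_{\bar{\sigma}}{}^{\gamma }}\,dz^{\sigma }$ in $\tau ^{\beta }=\sum _{\gamma }A^{\beta }{}_{\bar{\gamma}}\theta ^{\bar{\gamma}}=\sum _{\gamma }A_{\bar{\beta}\bar{\gamma}}\theta ^{\bar{\gamma}}$ and reading off the $\mathring{\theta}$-, $dz^{\sigma }$- and $dz^{\bar{\gamma}}$-coefficients gives $A^{\beta }=A_{\bar{\beta}\bar{\gamma}}a^{\bar{\gamma}}$, $B^{\beta }{}_{\sigma }=A_{\bar{\beta}\bar{\gamma}}\overline{c_{\bar{\sigma}}{}^{\gamma }}$, and $C^{\beta }{}_{\bar{\gamma}}=\sqrt{2}\,\overline{b}\,A_{\bar{\beta}\bar{\gamma}}$ (which equals $\sqrt{2}\,b\,A_{\bar{\beta}\bar{\gamma}}$ modulo higher decay since $A$ is real); combined with $C^{\beta }{}_{\bar{\gamma}}=O(\rho ^{-2n-2})$ this yields $A_{\bar{\beta}\bar{\gamma}}=O(\rho ^{-2n-2})$ and completes (\ref{af9}).

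The main obstacle will be the bookkeeping in computing $d\theta ^{\beta }$: one must expand every term thrown up by (\ref{af3})--(\ref{af5}) and verify that, apart from the two displayed leading contributions, everything is genuinely $O(\rho ^{-2n-2})$; this invokes $\rho ^{4}=\omega \bar{\omega}$, $d\omega =\mathring{\theta}+2iz^{\bar{\gamma}}dz^{\gamma }$ and the bounds $|z|=O(\rho )$, $|\omega |=O(\rho ^{2})$ repeatedly, and it is the one place where the precise form (\ref{af5}) of $a^{\alpha }$ really enters. The subsequent coefficient matching is essentially linear algebra; the only delicacy is a mild bootstrap (the bound $A_{\alpha }{}^{\beta }=O(\rho ^{-2n-2})$ coming from the $\mathring{\theta}\wedge dz^{\gamma }$-comparison is used to discard a connection term in the $\mathring{\theta}\wedge dz^{\bar{\gamma}}$-comparison, and similarly the smallness of the $C_{\alpha }{}^{\beta }{}_{\bar{\gamma}}$ is used to drop lower-order pieces elsewhere), which is harmless because all quantities involved are rational functions of $(z,\bar{z},t)$ of a priori controlled homogeneity.
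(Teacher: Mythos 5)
Your proposal is correct and is exactly the paper's (one-line) approach carried out in full detail: the paper says only that these formulas ``are easily seen from the structure equations (\ref{SE})'', and you expand $d\theta^\beta$ from (\ref{af3})--(\ref{af5}) using $\rho^4=\omega\bar\omega$ and $d\omega=\mathring{\theta}+2iz^{\bar\gamma}dz^\gamma$, then match coefficients against $\theta^\alpha\wedge\theta_\alpha{}^\beta+\theta\wedge\tau^\beta$ together with the skew-Hermitian constraint $\theta_\alpha{}^\beta=-\overline{\theta_\beta{}^\alpha}$. One small slip: in your displayed expansion of $\sqrt{2}\,db\wedge dz^\beta$ there is a spurious factor of $i$ (it should read $-\sqrt{2}\,n\tilde{c}_nA\rho^{-2n-4}(\cdots)\wedge dz^\beta$), but the coefficient matching you describe leads to the stated (\ref{af8}), so the error does not propagate.
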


\begin{proof}
These formulas are easily seen from the structure equations (\ref{SE}) in
pseudohermitian geometry.
\end{proof}

\subsection{Pseudohermitian mass}

We define the pseudohermitian mass (p-mass in short) $m(J,\theta )$ on an
asymptotically flat pseudohermitian manifold $(N,J,\theta )$ by 
\begin{equation}
m(J,\theta ):=\lim_{\Lambda \rightarrow \infty }ni\oint_{S_{\Lambda
}}\sum_{\gamma =1}^{n}\theta _{\gamma }{}^{\gamma }\wedge \theta \wedge
(d\theta )^{n-1}  \label{af10}
\end{equation}

\noindent where $S_{\Lambda }$ $\subset $ $N_{\infty }$ denotes a Heisenberg
sphere $\partial B_{\Lambda }$ $=$ $\{|z|^{4}+t^{2}$ $=$ $\Lambda ^{4}\}$ of
large radius $\Lambda $.

The p-mass of the Heisenberg group $H_{n}$ is $m(J,\mathring{\theta})$ $=$ $%
0 $ since $\mathring{\theta}_{\alpha }{}^{\gamma }$ $=$ $0$ for all $\alpha
, $ $\gamma $ ((\ref{A1-1})). The notion of the p-mass was motivated by an
idea in general relativity (see \cite{CMY} for the case of dimension 3). It
has a variational meaning as shown below. Consider the Einstein-Hilbert type
action integral 
\begin{equation}
\mathfrak{A}(J,\theta )=-\int_{N}W\ \theta \wedge (d\theta )^{n}.
\end{equation}%
For $\theta $ fixed, consider the variation $J_{(t)}$ of $J$ that maintains
the asymptotically flat structure. Then writing $\frac{d}{dt}\big|%
_{t=0}J_{(t)}=2E$, $E$ $=$ $E_{\gamma }{}^{\bar{\beta}}\theta ^{\gamma
}\otimes Z_{\bar{\beta}}+E_{\bar{\gamma}}{}^{\beta }\theta ^{\bar{\gamma}%
}\otimes Z_{\beta },$ we have 
\begin{equation}
\frac{d}{dt}\Big|_{t=0}\big(\mathfrak{A}(J_{(t)},\theta )+m(J_{(t)},\theta )%
\big)=n\int_{N}\sum_{\beta ,\gamma }(A^{\beta }{}_{\bar{\gamma}}E_{\beta
}{}^{\bar{\gamma}}+A^{\bar{\beta}}{}_{\gamma }E_{\bar{\beta}}{}^{\gamma })\
\theta \wedge (d\theta )^{n}
\end{equation}%
where $A^{\beta }{}_{\bar{\gamma}}$.are coefficients of the torsion forms $%
\tau ^{\beta }:$ $\tau ^{\beta }=A^{\beta }{}_{\bar{\gamma}}\theta ^{\bar{%
\gamma}}.$

\begin{lemma}
\label{CRmass} In the preceding notations, it holds that 
\begin{equation}
m(J,\theta )=\left( n!(2^{2n}n^{2})(n\tilde{c}_{n}+c_{n})\alpha _{n}\Omega
_{n}\right) A  \label{af12}
\end{equation}%
where $\Omega _{n}$ is the Euclidean volume of the unit ball of $%
C^{n},\alpha _{1}=2,\alpha _{2}=\frac{\pi }{2}$ and 
\begin{equation}
\alpha _{n}=\left\{ 
\begin{array}{ll}
\frac{\prod_{k=1}^{m}(2k)}{\prod_{k=1}^{m}(2k+1)}\alpha _{1}, & n=2m+1,\
m\geq 1 \\ 
&  \\ 
\frac{\prod_{k=2}^{m}(2k-1)}{\prod_{k=2}^{m}(2k)}\alpha _{2}, & n=2m,\ m\geq
2%
\end{array}%
\right.  \label{af13}
\end{equation}
\end{lemma}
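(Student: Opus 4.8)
The plan is to evaluate the limit defining $m(J,\theta)$ in (\ref{af10}) by substituting the asymptotic expansions from (\ref{af1})--(\ref{af9}) and extracting the leading $\rho^{-2n}$-order contribution, all other terms vanishing in the limit $\Lambda\to\infty$. First I would observe that the integrand $\sum_{\gamma}\theta_{\gamma}{}^{\gamma}\wedge\theta\wedge(d\theta)^{n-1}$ is a $(2n+1)$-form restricted to the Heisenberg sphere $S_{\Lambda}$, so only the component of $\sum_{\gamma}\theta_{\gamma}{}^{\gamma}$ transverse to $S_{\Lambda}$ matters. Using (\ref{af6})--(\ref{af8}), the relevant piece of $\theta_{\alpha}{}^{\alpha}$ is governed by the coefficients $C_{\alpha}{}^{\alpha}{}_{\bar{\gamma}}$ and their conjugates, which carry the factors $\tilde{c}_{n}$ and $c_{n}$; summing the diagonal entries one picks up the combination $n\tilde{c}_{n}+c_{n}$ times $A$ times a universal Heisenberg-geometry factor. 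Since $\theta$ and $d\theta$ may be replaced by $\mathring{\theta}$ and $d\mathring{\theta}$ modulo terms of higher decay (their corrections are $O(\rho^{-2n})$ but wedge against a factor that is already $O(\rho^{-2n})$, hence negligible after integration over a sphere of "area" $\sim\Lambda^{2n}$), the computation reduces to a fixed integral over the unit Heisenberg sphere in $H_{n}$.

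The key computational step is therefore to evaluate an integral of the schematic form $\oint_{S_{1}}\big(\text{linear in }z^{\gamma},z^{\bar\gamma},t\text{ over }\rho^{2n+4}\big)\,\mathring{\theta}\wedge(d\mathring{\theta})^{n-1}$ against the Heisenberg dilation-invariant measure on $S_{1}=\{|z|^{4}+t^{2}=1\}$. This is where the constant $\alpha_{n}$ in (\ref{af13}) and the Euclidean ball volume $\Omega_{n}$ enter: after writing $z^{\gamma}=r\zeta^{\gamma}$ with $|\zeta|=1$ and $t$ the remaining variable, the $t$-integration over $[-1,1]$ (or the appropriate curve) produces the rational-product expressions for $\alpha_{n}$, split by parity of $n$ because the relevant power of $(1-t^{2})$ and the measure combine differently for $n$ even versus $n$ odd; the angular integration over the unit sphere $S^{2n-1}\subset\mathbb{C}^{n}$ together with the radial part contributes $\Omega_{n}$. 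The powers of $2$ and $n$ in the prefactor $n!\,(2^{2n}n^{2})$ come from three sources: the $n!$ from expanding $(d\theta)^{n-1}$ combinatorially and the Levi form normalization $d\theta=i\delta_{\alpha\beta}\theta^{\alpha}\wedge\theta^{\bar\beta}$, the $n^{2}$ from the explicit $n$'s appearing in the coefficients (\ref{af5}), (\ref{af8}) and from the overall factor $ni$ in (\ref{af10}), and the $2^{2n}$ from the normalization $\sqrt{2}\,dz^{\alpha}$ in the coframe together with converting between real and complex volume elements.

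I expect the main obstacle to be bookkeeping rather than conceptual: carefully isolating which terms of $\theta_{\gamma}{}^{\gamma}$ survive the wedge with $\theta\wedge(d\theta)^{n-1}$ on $S_{\Lambda}$ (many of the $O(\rho^{-2n-2})$ terms in (\ref{af8}) die, but one must check that the cross-terms between the $O(\rho^{-2n})$ corrections in $\theta$, $d\theta$ and the $O(\rho^{-2n-2})$, $O(\rho^{-2n-1})$ connection coefficients genuinely vanish after integration), and then correctly tracking every factor of $2$, $i$, $n$, and $\sqrt{2}$ through the change to Heisenberg spherical coordinates. The parity split in (\ref{af13}) signals that the $t$-integral $\int (1-t^{2})^{k}\,dt$-type expressions must be handled by an explicit reduction formula, and matching the base cases $\alpha_{1}=2$, $\alpha_{2}=\pi/2$ pins down the normalization. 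Once the universal integral is computed for one convenient choice of asymptotically flat model (for instance the Green's-function blow-up, where $c_{n},\tilde{c}_{n}$ are as in the Remark), the general formula (\ref{af12}) follows since both sides are linear in $A$ and depend on $(J,\theta)$ only through the asymptotic data.
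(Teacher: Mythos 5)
Your plan is essentially the paper's own proof: substitute the expansions (\ref{af6})--(\ref{af8}) into (\ref{af10}), discard higher-decay cross terms (since $(d\theta)^{n-1}=(d\mathring{\theta})^{n-1}+O(\rho^{-2})$), reduce to a fixed integral over the unit Heisenberg sphere, and evaluate it by writing $z=r\varphi$ so that the $t$-integral yields $\alpha_{n}=\int_{-1}^{1}(1-t^{2})^{(n-1)/2}dt$ (hence the parity split via Wallis) and the remaining part gives $2^{2n}n!\,\Omega_{n}$ by Stokes' theorem. The closing suggestion to verify the formula on the Green's-function blow-up model is unnecessary --- the computation is already model-independent --- but harmless.
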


\begin{proof}
By (\ref{af6}),(\ref{af7}) and (\ref{af8}), we have 
\begin{equation}
\begin{split}
ni\sum_{\gamma }\theta _{\gamma }{}^{\gamma }& =ni\sum_{\gamma }A_{\gamma
}{}^{\gamma }\mathring{\theta}+B_{\gamma }{}^{\gamma }{}_{\beta }dz^{\beta
}+C_{\gamma }{}^{\gamma }{}_{\bar{\beta}}dz^{\bar{\beta}} \\
& =ni\left[ O(\rho ^{-2n-2})\mathring{\theta}+\left( \frac{-i(n^{2}\tilde{c}%
_{n}+nc_{n})Az^{\bar{\beta}}\bar{w}}{\rho ^{2n+4}}+O(\rho ^{-2n-2})\right)
dz^{\beta }\right. \\
& \left. \ \ \ \ \ \ \ \ \ \ \ \ +\left( \frac{-i(n^{2}\tilde{c}%
_{n}+nc_{n})Az^{\bar{\beta}}\bar{w}}{\rho ^{2n+4}}+O(\rho ^{-2n-2})\right)
dz^{\bar{\beta}}\right] .
\end{split}
\label{af14}
\end{equation}

Observing that $d\rho =\frac{|z|^{2}z^{\bar{\beta}}dz^{\beta
}+|z|^{2}z^{\beta }dz^{\bar{\beta}}+tdt}{2\rho ^{3}}=O(\rho )$, via (\ref%
{af1}) we have 
\begin{equation}
(d\theta )^{n-1}=(d\mathring{\theta})^{n-1}+O(\rho ^{-2}).  \label{af15}
\end{equation}%
Substituting (\ref{af1}), (\ref{af14}) and (\ref{af15}) into (\ref{af10}),
we get (recall $\omega $ $=$ $t+i|z|)$ 
\begin{equation}
\begin{split}
m(J,\theta )& =\lim_{\Lambda \rightarrow \infty }ni\oint_{S_{\Lambda
}}\sum_{\gamma }\theta _{\gamma }{}^{\gamma }\wedge \theta \wedge (d\theta
)^{n-1} \\
& =n(n^{2}\tilde{c}_{n}+nc_{n})A\oint_{\infty }\rho ^{-(2n+4)}\sum_{\beta
}(z^{\bar{\beta}}\bar{\omega}dz^{\beta }+z^{\beta }\omega dz^{\bar{\beta}%
})\wedge \mathring{\theta}\wedge (d\mathring{\theta})^{n-1} \\
& =n(n^{2}\tilde{c}_{n}+nc_{n})A\oint_{S_{1}}\sum_{\beta }(z^{\bar{\beta}}%
\bar{\omega}dz^{\beta }+z^{\beta }\omega dz^{\bar{\beta}})\wedge \mathring{%
\theta}\wedge (d\mathring{\theta})^{n-1},\ S_{1}=\{\rho =1\}.
\end{split}
\label{af16}
\end{equation}%
We claim that 
\begin{equation}
\oint_{S_{1}}\sum_{\beta }(z^{\bar{\beta}}\bar{\omega}dz^{\beta }+z^{\beta
}\omega dz^{\bar{\beta}})\wedge \mathring{\theta}\wedge (d\mathring{\theta}%
)^{n-1}=2^{2n}n!\alpha _{n}\Omega _{n}  \label{af17}
\end{equation}%
where 
\begin{equation*}
\alpha _{n}:=\int_{-1}^{1}(1-t^{2})^{\frac{n-1}{2}}dt.
\end{equation*}%
By trigonometric substitution, $t=\sin {\theta },\ -\frac{\pi }{2}\leq
\theta \leq \frac{\pi }{2}$, it is easily seen that 
\begin{equation*}
\alpha _{n}=2\int_{0}^{\frac{\pi }{2}}\cos ^{n}{\theta }d\theta .
\end{equation*}%
Then (\ref{af13}) follows from Wallis' formulas. Substituting (\ref{af17})
into (\ref{af16}), we complete the proof of (\ref{af12}). Now, for (\ref%
{af17}) let $z=r\varphi $ where $\varphi =(\varphi _{1},\cdots ,\varphi
_{n})\in $ the unit sphere of $C^{n}$, i.e. $\sum_{\beta =1}^{n}|\varphi
_{\beta }|^{2}=1$. Then, on $S_{1}$ we have 
\begin{equation}
\begin{split}
z^{\beta }dz^{\bar{\beta}}-z^{\bar{\beta}}dz^{\beta }& =r^{2}(\varphi
_{\beta }d\varphi _{\bar{\beta}}-\varphi _{\bar{\beta}}d\varphi _{\beta }),
\\
(z^{\beta }dz^{\bar{\beta}}+z^{\bar{\beta}}dz^{\beta })& =2r|\varphi _{\beta
}|^{2}dr, \\
\mathring{\theta}=dt& +ir^{2}\sum_{\beta }(\varphi _{\beta }d\varphi _{\bar{%
\beta}}-\varphi _{\bar{\beta}}d\varphi _{\beta }), \\
d\mathring{\theta}=2i\Big(r^{2}\sum_{\beta }d\varphi _{\beta }\wedge
d\varphi _{\bar{\beta}}& +rdr\wedge \sum_{\beta }(\varphi _{\beta }d\varphi
_{\bar{\beta}}-\varphi _{\bar{\beta}}d\varphi _{\beta })\Big) \\
=2i(r^{2}B_{2}+rdr\wedge B_{1})& ,
\end{split}
\label{af18}
\end{equation}%
where $B_{1}=\sum_{\beta }(\varphi _{\beta }d\varphi _{\bar{\beta}}-\varphi
_{\bar{\beta}}d\varphi _{\beta })$ and $B_{2}=\sum_{\beta }d\varphi _{\beta
}\wedge d\varphi _{\bar{\beta}}$. Observe that 
\begin{equation}
(d\mathring{\theta})^{n-1}=(2i)^{n-1}\Big((n-1)(r^{2}B_{2})^{n-2}\wedge
(rdr\wedge B_{1})+(r^{2}B_{2})^{n-1}\Big),  \label{af19}
\end{equation}%
Using (\ref{af18}), (\ref{af19}) and $r^{4}+t^{2}=1$, we get 
\begin{equation}
\begin{split}
& \oint_{S}\sum_{\beta }(z^{\bar{\beta}}\bar{\omega}dz^{\beta }+z^{\beta
}\omega dz^{\bar{\beta}})\wedge \mathring{\theta}\wedge (d\mathring{\theta}%
)^{n-1} \\
=& \oint_{S}\sum_{\beta }i|z|^{2}(z^{\beta }dz^{\bar{\beta}}-z^{\bar{\beta}%
}dz^{\beta })\wedge \mathring{\theta}\wedge (d\mathring{\theta}%
)^{n-1}+\oint_{S}\sum_{\beta }t(z^{\beta }dz^{\bar{\beta}}+z^{\bar{\beta}%
}dz^{\beta })\wedge \mathring{\theta}\wedge (d\mathring{\theta})^{n-1} \\
=& (2i)^{n-1}\oint_{S}(ir^{4}B_{1}+2trdr)\wedge (dt+ir^{2}B_{1})\wedge
(r^{2}B_{2})^{n-1} \\
=& (2i)^{n-1}\oint_{S}(ir^{4}B_{1}\wedge dt+2itr^{3}dr\wedge B_{1})\wedge
(r^{2}B_{2})^{n-1} \\
=& 2^{n-1}\oint_{S}r^{2n-2}(iB_{1})\wedge dt\wedge (iB_{2})^{n-1} \\
=& \oint_{S}r^{2n-2}(iB_{1})\wedge (2iB_{2})^{n-1}\wedge dt \\
=& \int_{\{z\in C^{n}|r=1\}}\left( \int_{-1}^{1}(1-t^{2})^{\frac{n-1}{2}%
}dt\right) (iB_{1})\wedge (2iB_{2})^{n-1} \\
=& \alpha _{n}\int_{\{z\in C^{n}|r\leq 1\}}d\left( \sum_{\beta }i(z^{\beta
}dz^{\bar{\beta}}-z^{\bar{\beta}}dz^{\beta })\wedge (2i\sum_{\beta
}dz^{\beta }\wedge dz^{\bar{\beta}})^{n-1}\right) \\
=& \alpha _{n}\int_{\{z\in C^{n}|r\leq 1\}}(2i\sum_{\beta }dz^{\beta }\wedge
dz^{\bar{\beta}})^{n} \\
=& 2^{2n}n!\alpha _{n}\Omega _{n}.
\end{split}
\label{af20}
\end{equation}%
This completes the proof of (\ref{af17}) and hence the lemma.
\end{proof}

We also define (a variant of p-mass in terms of real version of connection
forms):%
\begin{equation}
\widetilde{m}(J,\theta ):=\lim_{\Lambda \rightarrow \infty
}\oint_{S_{\Lambda }}\sum_{j,k=1}^{2n}\omega _{j}{}^{k}(e_{j})e_{k}\lrcorner
dV,  \label{af11}
\end{equation}%
where $\{\omega _{j}{}^{k}\}$ denote the (real version of) connection forms
with respect to an orthonormal frame $\{e_{j}\}$ (see the Appendix) and%
\begin{equation*}
dV:=\frac{1}{2^{n}n!}\theta \wedge (d\theta )^{n}=\theta \wedge \left(
\bigwedge_{\beta =1}^{n}\omega ^{\beta }\wedge \omega ^{n+\beta }\right) .
\end{equation*}

\begin{lemma}
In the preceding notations, it holds that 
\begin{equation}
\widetilde{m}(J,\theta )=\left( 2^{n+\frac{3}{2}}n^{2}(\tilde{c}%
_{n}+nc_{n})\alpha _{n}\Omega _{n}\right) A  \label{af21}
\end{equation}
\end{lemma}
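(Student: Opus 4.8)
The plan is to run the computation of Lemma \ref{CRmass} again, the only genuinely new ingredient being the passage between the real connection forms $\omega_j{}^k$ and the complex pseudohermitian connection forms $\theta_\alpha{}^\beta$. First I would record that dictionary. Since the pseudohermitian connection preserves both $J$ and $L_\theta$ on $\xi$, its matrix in the unitary frame $\{Z_\alpha,Z_{\bar\alpha}\}$ is block diagonal, $\operatorname{diag}(\theta_\alpha{}^\beta,\overline{\theta_\alpha{}^\beta})$; writing $e_\alpha$ and $e_{n+\alpha}=Je_\alpha$ as the real and imaginary combinations of $Z_\alpha,Z_{\bar\alpha}$ (as in the Appendix), one gets $\omega_\alpha{}^\beta$ and $\omega_\alpha{}^{n+\beta}$ as the real and imaginary parts of $\theta_\alpha{}^\beta$, and the remaining blocks are then fixed by the skew-symmetry $\omega_j{}^k=-\omega_k{}^j$ and the unitarity relation $\theta_\alpha{}^\beta=-\overline{\theta_\beta{}^\alpha}$. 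In particular the torsion $A_{\alpha\beta}$ never enters $\omega_j{}^k$ directly; it affects only the asymptotics of the $\theta_\alpha{}^\beta$ through the structure equations, exactly as recorded in (\ref{af6})--(\ref{af9}).

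Next I would substitute this into $\sum_{j,k}\omega_j{}^k(e_j)\,e_k\lrcorner dV$, rewrite it as $V\lrcorner dV$ for the vector field $V:=\sum_{j,k}\omega_j{}^k(e_j)e_k$, and simplify: a short calculation with the dictionary above and the duality $\theta^\alpha\sim\sqrt2\,dz^\alpha$ shows that, to the order that matters on $N_\infty$, the $e_\gamma$-component of $V$ is $\sum_\beta\operatorname{Re}(C_\beta{}^\gamma{}_{\bar\beta})$ (and similarly for the $e_{n+\gamma}$-components). Now I plug in (\ref{af8}). The $\mathring\theta$-coefficients $A_\alpha{}^\beta=O(\rho^{-2n-2})$ and the torsion coefficients $A^\beta,B^\beta{}_\gamma,C^\beta{}_{\bar\gamma}=O(\rho^{-2n-2})$ of (\ref{af9}) decay too fast to contribute in the limit $\Lambda\to\infty$ (just as the remainders in (\ref{af14})--(\ref{af15}) do not), while the surviving pieces $C_\gamma{}^\gamma{}_{\bar\gamma}$ and $C_\beta{}^\gamma{}_{\bar\beta}$ ($\beta\neq\gamma$) assemble into the combination $\tilde c_n+nc_n$ rather than the $n\tilde c_n+c_n$ produced by the complex trace $\sum_\gamma\theta_\gamma{}^\gamma$ in (\ref{af14}). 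This is exactly why $\widetilde m$ and $m$ are not simply proportional (the two combinations coincide only for $n=1$), so one really does have to recompute which coefficients survive rather than quote (\ref{af12}).

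With the leading form of $V\lrcorner dV$ in hand, the rest is identical to (\ref{af16})--(\ref{af20}): parabolic homogeneity reduces the integral over $S_\Lambda$ to the integral over $S_1=\{\rho=1\}$, and Stokes' theorem turns the boundary integral into the Euclidean volume integral $\int_{\{r\leq1\}}(2i\sum_\beta dz^\beta\wedge dz^{\bar\beta})^n=2^{2n}n!\,\alpha_n\Omega_n$ of (\ref{af17}), with $\alpha_n$ as in (\ref{af13}).

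The step I expect to be the main obstacle is the bookkeeping of the numerical constants: one must carry all the factors of $\tfrac1{\sqrt2}$ created by switching between $\{Z_\alpha\}$ and $\{e_j\}$ and by the normalization $\theta^\alpha\sim\sqrt2\,dz^\alpha$, combine them with the $\tfrac1{2^nn!}$ in $dV=\tfrac1{2^nn!}\theta\wedge(d\theta)^n$ and with $2^{2n}n!\,\alpha_n\Omega_n$, and check that the result is exactly $2^{n+\frac32}n^2(\tilde c_n+nc_n)\alpha_n\Omega_n$. Once the constant is pinned down, (\ref{af21}) follows.
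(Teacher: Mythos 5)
Your proposal is correct and follows essentially the same route as the paper: start from the interior-product identity for $\sum_{j,k}\omega_j{}^k(e_j)e_k\lrcorner dV$, translate to the complex connection coefficients, observe from (\ref{af8}) that the surviving sum is $\sum_\beta C_\beta{}^\gamma{}_{\bar\beta}$ (giving $\tilde c_n+nc_n$ rather than the $n\tilde c_n+c_n$ in (\ref{af14})), and finish with the boundary integral (\ref{af17}). You correctly flag the $\sqrt 2$/volume-form bookkeeping as the remaining step, and your observation that $m$ and $\widetilde m$ are genuinely different linear combinations of $c_n,\tilde c_n$ (coinciding only for $n=1$) matches the paper's computation.
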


\begin{proof}
By the definition of interior product, we have 
\begin{equation}
\sum_{j,k}\omega _{j}{}^{k}(e_{j})e_{k}\lrcorner dV=\sum_{\gamma
=1}^{n}\left( \omega _{j}{}^{\gamma }(e_{j})\omega ^{n+\gamma }-\omega
_{j}{}^{n+\gamma }(e_{j})\omega ^{\gamma }\right) \wedge \theta \wedge
\left( \bigwedge_{\beta =1,\beta \neq \gamma }^{n}\omega ^{\beta }\wedge
\omega ^{n+\beta }\right) .  \label{af22}
\end{equation}%
Observe that 
\begin{equation}
\theta \wedge \left( \bigwedge_{\beta =1,\beta \neq \gamma }^{n}\omega
^{\beta }\wedge \omega ^{n+\beta }\right) =\left( \frac{i}{2}\right) \theta
\wedge \left( \bigwedge_{\beta =1,\beta \neq \gamma }^{n}\theta ^{\beta
}\wedge \theta ^{\bar{\beta}}\right)  \label{af23}
\end{equation}%
and 
\begin{equation}
\begin{split}
& \sum_{j}\left( \omega _{j}{}^{\gamma }(e_{j})\omega ^{n+\gamma }-\omega
_{j}{}^{n+\gamma }(e_{j})\omega ^{\gamma }\right) \\
=& \sum_{\alpha }\left( -\theta _{\bar{\alpha}}{}^{\bar{\gamma}}(Z_{\alpha
})\theta ^{\gamma }+\theta _{\alpha }{}^{\gamma }(Z_{\bar{\alpha}})\theta ^{%
\bar{\gamma}}\right) \\
=& i\left( \sum_{\alpha }C_{\alpha }{}^{\gamma }{}_{\bar{\alpha}}\right)
\theta ^{\bar{\gamma}}+\ \text{conj.} \\
=& \sqrt{2}n(nc_{n}+\tilde{c}_{n})A\left( \frac{z^{\gamma }\omega dz^{\bar{%
\gamma}}+z^{\bar{\gamma}}\bar{\omega}dz^{\gamma }}{\rho ^{2n+4}}\right) +%
\text{h.d.o.t.}
\end{split}
\label{af24}
\end{equation}%
where \text{h.d.o.t. means "higher decay order term(s)" }and recall $\omega $
$=$ $t+i|z|^{2}$. Substituting (\ref{af23}) and (\ref{af24}) into (\ref{af22}%
), and noting that 
\begin{equation}
\mathring{\theta}\wedge (d\mathring{\theta})^{n-1}=2^{n-1}(n-1)!\left( \frac{%
i}{2}\right) ^{n-1}\mathring{\theta}\wedge \sum_{\alpha =1}^{n}\left(
\bigwedge_{\beta =1,\beta \neq \alpha }^{n}\sqrt{2}dz^{\beta }\wedge \sqrt{2}%
dz^{\bar{\beta}}\right) ,  \label{af25}
\end{equation}%
we have 
\begin{equation*}
\sum_{j,k}\omega _{j}{}^{k}(e_{j})e_{k}\lrcorner dV=\frac{\sqrt{2}n(nc_{n}+%
\tilde{c}_{n})A}{2^{n-1}(n-1)!}\sum_{\gamma }\frac{z^{\gamma }\omega dz^{%
\bar{\gamma}}+z^{\bar{\gamma}}\bar{\omega}dz^{\gamma }}{\rho ^{2n+4}}\wedge 
\mathring{\theta}\wedge (d\mathring{\theta})^{n-1}+\text{h.d.o.t.}
\end{equation*}%
Therefore 
\begin{equation*}
\begin{split}
\widetilde{m}(J,\theta )& =\lim_{\Lambda \rightarrow \infty
}\oint_{S_{\Lambda }}\sum_{j,k}\omega _{j}{}^{k}(e_{j})e_{k}\lrcorner dV \\
& =\frac{\sqrt{2}n(nc_{n}+\tilde{c}_{n})A}{2^{n-1}(n-1)!}\int_{S_{1}}\sum_{%
\beta }(z^{\bar{\beta}}\bar{\omega}dz^{\beta }+z^{\beta }\omega dz^{\bar{%
\beta}})\wedge \mathring{\theta}\wedge (d\mathring{\theta})^{n-1}.
\end{split}%
\end{equation*}%
Together with (\ref{af17}), we get the formula (\ref{af21}).
\end{proof}

\subsection{Blow-up through the Green's function}

\label{blup}

Suppose that $(M,J)$ is spherical in a neighborhood of a point $p\in M$.
Then we can find a contact form $\hat{\theta}$ and local coordinates $(z,t)$
such that $(z,t)(p)=0$ and near $p$%
\begin{equation*}
\hat{\theta}=\mathring{\theta},\ \ \ \hat{\theta}^{\alpha }=\sqrt{2}%
dz^{\alpha }.
\end{equation*}%
We call such kind of coordinates the CR normal coordinates (see \ref{JL1}).
Recall $\rho $ $=$ $((\sum_{\beta =1}^{n}|z^{\beta }|^{2})^{2}+t^{2})^{1/4}.$

\begin{proposition}
\label{GE} With the notations above, in CR normal coordinates $(z,t)$ the
Green's function $G_{p}$ of the CR invariant sublaplacian $L_{b}$ (see (\ref%
{Lb}) and (\ref{Lb-2}) for the definition of $L_{b}$ and $G_{p}$) admits the
following expansion 
\begin{equation}
G_{p}=\frac{a_{n}}{2\pi }\rho ^{-2n}+A_{p}+O(\rho )  \label{exofgr}
\end{equation}%
for some dimensional constant $a_{n}>0$ and some constant $A_{p}\in \mathbb{R%
}.$
\end{proposition}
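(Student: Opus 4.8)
The plan is to exploit the fact that a spherical CR structure is \emph{exactly} flat near $p$ in CR normal coordinates, so that $G_{p}$ agrees with the explicit Heisenberg fundamental solution up to a smooth error; the expansion (\ref{exofgr}) is then just the first-order Taylor expansion of that error.

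\textbf{Step 1: Reduction to the Heisenberg model.} In CR normal coordinates $(z,t)$ the structure equations force $\hat{\theta}=\mathring{\theta}$ and $\hat{\theta}^{\alpha}=\sqrt{2}\,dz^{\alpha}$ on a full neighbourhood $U$ of $p$; thus $(M,J,\hat{\theta})$ restricted to $U$ \emph{is} an open subset of the Heisenberg group $H_{n}$, and in particular the Tanaka--Webster scalar curvature vanishes identically on $U$, so $L_{b}=(2+\tfrac{2}{n})\Delta_{b}$ there. Recall that on $H_{n}$ the Folland--Stein fundamental solution of the (positive) sublaplacian $\Delta_{b}$ is a positive constant multiple of $\rho^{-2n}$, $\rho=(|z|^{4}+t^{2})^{1/4}$; since the operator on $U$ is literally the Heisenberg sublaplacian, $\rho^{-2n}$ is locally integrable on $U$ and $\Delta_{b}(\rho^{-2n})=c_{n}'\,\delta_{p}$ as distributions on $U$ for a dimensional constant $c_{n}'>0$. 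Because $\mathcal{Y}(M,J)>0$ (the case relevant to Corollary \ref{PMT'} and the blow-up), $L_{b}$ is invertible and $L_{b}G_{p}=\delta_{p}$; defining the dimensional constant $a_{n}>0$ by $\tfrac{a_{n}}{2\pi}c_{n}'=\tfrac{n}{2(n+1)}$ and setting
\[
u:=G_{p}-\frac{a_{n}}{2\pi}\,\rho^{-2n}\quad\text{on }U,
\]
we obtain $\Delta_{b}u=\Delta_{b}G_{p}-\tfrac{a_{n}}{2\pi}\Delta_{b}(\rho^{-2n})=\tfrac{n}{2(n+1)}\delta_{p}-\tfrac{n}{2(n+1)}\delta_{p}=0$ as distributions on all of $U$, the Dirac masses cancelling by the choice of $a_{n}$.

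\textbf{Step 2: Smoothness of $u$.} On $U\cong H_{n}$ the horizontal vector fields and their single bracket span the tangent space, so $\Delta_{b}$ satisfies H\"ormander's bracket condition and is hypoelliptic; since $\Delta_{b}u=0$ on $U$, the function $u$ is $C^{\infty}$ on $U$ in the coordinates $(z,t)$. (Equivalently, the Levi-sum/parametrix construction of $G_{p}$ following \cite{JL} already places $u$ in a Folland--Stein H\"older space on $U$, and $\Delta_{b}u=0$ bootstraps it to $C^{\infty}$.) \textbf{Step 3: Taylor expansion.} Since $u\in C^{1}(U)$, Taylor's theorem gives $|u(z,t)-u(0,0)|\le C(|z|+|t|)$ on a fixed small neighbourhood of $p$; as $|z|\le\rho$ and $|t|\le\rho^{2}$ there, this is $O(\rho)$. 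Hence $u=A_{p}+O(\rho)$ with $A_{p}:=u(0,0)$, which together with the definition of $u$ is precisely the expansion (\ref{exofgr}).

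The main obstacle is Step 2, namely ruling out singular terms of $G_{p}$ of order strictly between $\rho^{-2n}$ and $1$. This is exactly where sphericity is indispensable: on a general pseudohermitian manifold the curvature and torsion of the local model would force such intermediate terms in the Green's function expansion, whereas here the local model is \emph{exactly} $H_{n}$, whose fundamental solution $c\,\rho^{-2n}$ carries no lower-order corrections at all, so the difference $u$ is genuinely $\Delta_{b}$-harmonic in a neighbourhood of $p$ and the statement reduces to the classical hypoellipticity (and removable-singularity) theory for the Folland--Stein sublaplacian. The remaining steps are routine.
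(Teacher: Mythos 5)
Your proof is correct and follows essentially the same route as the paper: reduce to the Heisenberg model via CR normal coordinates (so $L_{b}=b_{n}\mathring{\Delta}_{b}$ near $p$), subtract off the appropriate multiple of the Folland--Stein fundamental solution $\rho^{-2n}$ to cancel the Dirac masses, invoke hypoellipticity of $\mathring{\Delta}_{b}$ to get smoothness of the remainder, and Taylor-expand. The only cosmetic difference is the normalization of the Green's function ($L_{b}G_{p}=\delta_{p}$ versus the paper's $L_{b}G_{p}=16\delta_{p}$), which is absorbed into the definition of $a_{n}$.
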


\begin{proof}
To deduce formula (\ref{exofgr}), recall $L_{b}$ :$=$ $b_{n}\Delta _{b}+W$
where $b_{n}$ $=$ $2+\frac{2}{n},$ $W$ is the Tanaka-Webster scalar
curvature (see \cite{JL} or (\ref{Lb})). Since, near $p\in M,\ \hat{\theta}=%
\mathring{\theta}$ hence $L_{b}=b_{n}\mathring{\Delta}_{b}$. Thus near $p$
we have%
\begin{eqnarray}
b_{n}\mathring{\Delta}_{b}\left( G_{p}-\frac{a_{n}}{2\pi }\rho ^{-2n}\right)
&=&b_{n}\mathring{\Delta}_{b}G_{p}-b_{n}\mathring{\Delta}_{b}\left( \frac{%
a_{n}}{2\pi }\rho ^{-2n}\right)  \label{gr0} \\
&=&L_{b}G_{p}-\frac{a_{n}b_{n}}{2\pi }\mathring{\Delta}_{b}\rho ^{-2n} 
\notag \\
&=&16\delta _{p}-16\delta _{p}=0.  \notag
\end{eqnarray}%
\noindent Here we have used the following formulae (cf. (\ref{Lb-2}))%
\begin{equation}
L_{b}G_{p}=16\delta _{p},  \label{gr1}
\end{equation}%
\begin{equation}
\mathring{\Delta}_{b}\rho ^{-2n}=\frac{32\pi }{a_{n}b_{n}}\delta _{0}.
\label{gr2}
\end{equation}

\noindent By (\ref{gr0}) we conclude that $G_{p}-\frac{a_{n}}{2\pi }\rho
^{-2n}$ is a smooth function near $p$ since $\mathring{\Delta}_{b}$ is
hypoelliptic. So (\ref{exofgr}) follows.
\end{proof}

Note that (\ref{gr2}) determines the value of $a_{n}.$ We will consider the
following pseudohermitian manifold called the blow-up at $p$ (through the
Green's function):%
\begin{equation}
\left( M\setminus \{p\},J,\theta :=G_{p}^{\frac{2}{n}}\hat{\theta}\right) .
\label{blow-up}
\end{equation}

\begin{proposition}
\label{BlmG} With the preceding notations, we have (1) $\left( M\setminus
\{p\},J,\theta :=G_{p}^{\frac{2}{n}}\hat{\theta}\right) $ is asymptotically
flat; (2) The p-mass $m(J,\theta )$ is a positive multiple of $A_{p}.$
\end{proposition}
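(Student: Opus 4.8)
The plan is to verify Proposition \ref{BlmG} by explicitly matching the blow-up metric $\theta = G_p^{2/n}\hat\theta$ against the asymptotic-flatness template (\ref{af1}), and then reading off the mass from Lemma \ref{CRmass}. First I would introduce the inverted coordinates. Near $p$ the CR normal coordinates $(z,t)$ give $\hat\theta = \mathring\theta$, $\hat\theta^\alpha = \sqrt2\, dz^\alpha$, and from Proposition \ref{GE} we have $G_p = \frac{a_n}{2\pi}\rho^{-2n} + A_p + O(\rho)$. The natural move is to use the Heisenberg dilation/inversion sending $\rho \to 1/\rho$; concretely set new coordinates $(z^*,t^*)$ on $H_n \setminus B_{\rho_0}$ by the Heisenberg inversion (the CR automorphism of $S^{2n+1}$ fixing the two poles, pulled back to $H_n$), under which $\rho^* \sim \rho^{-1}$. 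One then checks that the Heisenberg contact form transforms by a conformal factor that is a power of $\rho$, so that $\theta = G_p^{2/n}\hat\theta$ expressed in the $(z^*,t^*)$ coordinates becomes $\big(\frac{a_n}{2\pi}\big)^{2/n}(\rho^*)^{-4}$ times $\mathring\theta^*$ plus corrections; expanding $G_p^{2/n} = \big(\frac{a_n}{2\pi}\rho^{-2n}\big)^{2/n}\big(1 + \frac{2\pi A_p}{a_n}\rho^{2n} + O(\rho^{2n+1})\big)^{2/n} = \big(\frac{a_n}{2\pi}\big)^{2/n}\rho^{-4}\big(1 + \frac{4\pi A_p}{n a_n}\rho^{2n} + O(\rho^{2n+1})\big)$ produces, after absorbing the leading $\big(\frac{a_n}{2\pi}\big)^{2/n}(\rho^*)^{-4}$ factor into the standard Heisenberg structure by the inversion's conformality, exactly the form
\begin{equation*}
\theta = \big(1 + c_n A_p (\rho^*)^{-2n} + O((\rho^*)^{-2n-1})\big)\mathring\theta^* + (\text{lower order}),
\end{equation*}
with $c_n = \frac{4\pi}{n a_n}$ and $\tilde c_n = \frac{2\pi}{n a_n}$ (the factor $\frac12$ between $c_n$ and $\tilde c_n$ coming from the fact that $\theta^\alpha$ scales by the square root of the factor for $\theta$, since the Levi form is quadratic in the $\theta^\alpha$). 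This establishes part (1), and simultaneously identifies the constant $A$ appearing in (\ref{af1}) as $A = A_p$.

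For part (2), once $A = A_p$ is established, Lemma \ref{CRmass} gives immediately
\begin{equation*}
m(J,\theta) = \big(n!\,2^{2n}n^2\,(n\tilde c_n + c_n)\,\alpha_n\,\Omega_n\big)A_p,
\end{equation*}
and since $n\tilde c_n + c_n = \frac{2\pi(n+2)}{n a_n} > 0$ and $\alpha_n, \Omega_n, a_n > 0$, the coefficient of $A_p$ is a positive constant; hence $m(J,\theta)$ is a positive multiple of $A_p$, as claimed.

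The main obstacle is the bookkeeping in the first step: one must check that the Heisenberg inversion really does convert $(\rho^*)^{-4}\mathring\theta^*$ (with its attendant $dz^*$ corrections) back into the standard $\mathring\theta^*$ up to the stated decay orders, and in particular that no extra term of order $(\rho^*)^{-2n}$ or slower is generated by the inversion alone (only by the $A_p$ term in the Green's function expansion). This amounts to verifying that the standard Heisenberg geometry is genuinely asymptotically flat with $A=0$ under inversion — essentially a computation showing the developing-map normalization is clean — together with tracking that the $O(\rho)$ remainder in (\ref{exofgr}) contributes only to the $O((\rho^*)^{-2n-1})$ terms. A secondary technical point is justifying that the unitary coframe $\theta^\alpha$ can be chosen so that its expansion matches the second line of (\ref{af1}) with the correct constant $\tilde c_n$; this is forced by the Levi equation $d\theta = i\delta_{\alpha\beta}\theta^\alpha \wedge \theta^{\bar\beta}$ once the expansion of $\theta$ is known, exactly as in the derivation of (\ref{af5}), so it reduces to a consistency check rather than an independent construction. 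Everything else is routine substitution into the already-proved Lemma \ref{CRmass}.
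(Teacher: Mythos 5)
Your proposal follows essentially the same route as the paper: pass to inverted CR normal coordinates via the Heisenberg inversion, expand $G_{p}^{2/n}\hat{\theta}$ (and the associated unitary coframe, fixed up to the Levi equation) against the template (\ref{af1}), and then read off the mass from Lemma \ref{CRmass}. The only slip is constant bookkeeping: after rescaling the inverted coordinates to absorb the factor $\left( a_{n}/2\pi \right) ^{2/n}$, the constant in (\ref{af1}) is $A=(a_{n}/2\pi )^{2}A_{p}$ rather than $A=A_{p}$ as you assert, but since $(a_{n}/2\pi )^{2}>0$ this does not affect the conclusion that $m(J,\theta )$ is a positive multiple of $A_{p}$.
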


\begin{proof}
Take an admissible coframe $\{\theta ^{\alpha }\}$ for $\theta $ as follows: 
\begin{equation}
\theta ^{\alpha }=G_{p}^{\frac{1}{n}}\left( \hat{\theta}^{\alpha }+2i(\log {%
G_{p}^{\frac{1}{n}}})^{\alpha }\hat{\theta}\right) .
\end{equation}%
\noindent Recall (\cite[p.421]{Le1}) that this coframe is taken to have $%
h_{\alpha \beta }=\hat{h}_{\alpha \beta }=\delta _{\alpha \beta }$. From (%
\ref{exofgr}) it follows that 
\begin{equation}
\begin{split}
\theta & =G_{p}^{\frac{2}{n}}\hat{\theta}=\left( \frac{a_{n}}{2\pi }\rho
^{-2n}+A_{p}+O(\rho )\right) ^{\frac{2}{n}}\mathring{\theta} \\
& =\left( \frac{a_{n}}{2\pi }\right) ^{\frac{2}{n}}\left( \rho ^{-4}+\frac{%
4\pi }{na_{n}}A_{p}\rho ^{2n-4}+O(\rho ^{2n-3})\right) \mathring{\theta},
\end{split}%
\end{equation}%
\noindent and\ (recall $\omega =t+i|z|^{2})$%
\begin{equation}
\begin{split}
\theta ^{\alpha }& =G_{p}^{\frac{1}{n}}\left( \hat{\theta}^{\alpha }+2i(\log 
{G_{p}^{\frac{1}{n}}})^{\alpha }\hat{\theta}\right) \\
& =\left( \frac{a_{n}}{2\pi }\rho ^{-2n}+A_{p}+O(\rho )\right) ^{\frac{1}{n}}%
\sqrt{2}dz^{\alpha }+\frac{2i}{n}G_{p}^{\frac{1-n}{n}}\left( \hat{Z}_{\bar{%
\alpha}}G_{p}\right) \mathring{\theta} \\
& =\left( \frac{a_{n}}{2\pi }\right) ^{\frac{1}{n}}\left( \rho ^{-2}+\frac{%
2\pi }{na_{n}}A_{p}\rho ^{2n-2}+O(\rho ^{2n-1})\right) \sqrt{2}dz^{\alpha }
\\
& +\left( \frac{a_{n}}{2\pi }\right) ^{\frac{1}{n}}\frac{2i}{n}\left( \frac{%
in}{\sqrt{2}}\frac{z^{\alpha }\omega }{\rho ^{2n+4}}+O(1)\right) \left( \rho
^{2n-2}+\frac{2(1-n)\pi }{na_{n}}A_{p}\rho ^{4n-2}+O(\rho ^{4n-1})\right) 
\mathring{\theta}.
\end{split}%
\end{equation}%
Next we would like to express $\theta $ and $\theta ^{\alpha }$ in inverted
CR normal coordinates $(z_{\ast },t_{\ast })$. If $(z,t)$ are CR normal
coordinates in a neighborhood $U$ of $p$, we define the inverted CR normal
coordinates as%
\begin{equation*}
z_{\ast }^{\alpha }=\frac{z^{\alpha }}{\omega };\ t_{\ast }=-\frac{t}{%
|\omega |^{2}}\ \text{on}\ U\setminus \{p\},\text{ }\omega =t+i|z|^{2},\text{
}|z|^{2}=\sum_{\alpha =1}^{n}|z^{\alpha }|^{2}.
\end{equation*}%
\noindent Then we have 
\begin{eqnarray}
\theta &=&\left( \frac{a_{n}}{2\pi }\right) ^{\frac{2}{n}}\left( 1+\frac{%
4\pi }{na_{n}}A_{p}\rho _{\ast \rho }^{-2n}+O(\rho _{\ast }^{-2n-1})\right) (%
\mathring{\theta})_{\ast }  \label{3-36} \\
\theta ^{\alpha } &=&\left( \frac{a_{n}}{2\pi }\right) ^{\frac{1}{n}}\left(
1+\frac{2\pi }{na_{n}}A_{p}\rho _{\ast }^{-2n}+O(\rho _{\ast
}^{-2n-1})\right) \left( \sum_{\beta =1}^{n}a_{\alpha \beta }\sqrt{2}%
dz_{\ast }^{\beta }\right)  \notag \\
&&+\left( \frac{a_{n}}{2\pi }\right) ^{\frac{2}{n}}\left( \frac{2\sqrt{2}\pi 
}{a_{n}}A_{p}\frac{z^{\alpha }}{\omega ^{2}}\rho _{\ast }^{-2n+2}+O(\rho
_{\ast }^{-2n-2})\right) (\mathring{\theta})_{\ast }  \notag
\end{eqnarray}%
\noindent where 
\begin{equation}
a_{\alpha \beta }=\left\{ 
\begin{array}{ll}
\frac{(2i|z^{\alpha }|^{2}-\omega )\rho ^{2}}{\omega ^{2}}, & \alpha =\beta
\\ 
&  \\ 
\frac{(2iz^{\alpha }z^{\bar{\beta}})\rho ^{2}}{\omega ^{2}}, & \alpha \neq
\beta .%
\end{array}%
\right.
\end{equation}%
\noindent Changing coordinates $(z_{\ast },t_{\ast })$ by rescaling to
absorb constant $(a_{n}/2\pi )^{2/n},$ we obtain that 
\begin{equation}
A\text{ in }(\ref{af1})\text{ }=\text{ }(\frac{a_{n}}{2\pi })^{2}A_{p}.
\label{Ap}
\end{equation}%
\noindent Note that setting $\tilde{z}^{\alpha }=(a_{n}/2\pi )^{1/n}z_{\ast
}^{\alpha },$ $\tilde{t}=(a_{n}/2\pi )^{2/n}t_{\ast }$ gives $\rho _{\ast
}^{-2n}=(a_{n}/2\pi )^{2}\tilde{\rho}^{-2n}.$ We have shown that $\left(
M\setminus \{p\},J,\theta =G_{p}^{\frac{2}{n}}\hat{\theta}\right) $ is an
asymptotically flat pseudohermitian manifold. By (\ref{af12}) and (\ref{Ap})
we also conclude that $m(J,\theta )$ is a positive multiple of $A_{p}.$
\end{proof}

\section{Proofs of Theorem \protect\ref{PMT} and Corollary \protect\ref{PMT'}%
}

\subsection{Proof for $m\geq 0$}

First we need to find a positive spinor $\psi $ satisfying the equation $%
D_{\xi }^{2}\psi $ $=$ $0$ and approaching a constant spinor at the
infinity. Let us begin with the definition of weighted Folland-Stein spaces
on the Heisenberg group $H_{n}$. See the Appendix for basic material about $%
H_{n}.$ Let $\sigma $ $:=$ $(1+\rho ^{4})^{1/4}$ where we recall $\rho $ $=$ 
$((\sum_{\beta =1}^{n}|z^{\beta }|^{2})^{2}+t^{2})^{1/4}.$ The weighted
Lebesgue spaces $L_{\delta }^{p}(H_{n}),\ 1\leq p\leq \infty $, with weight $%
\delta \in \mathbb{R}$ are the spaces of measurable functions in $%
L_{loc}^{p}(H_{n})$ such that the norms $\Vert \cdot \Vert _{p,\delta }$
defined by 
\begin{equation}
\Vert u\Vert _{p,\delta }:=\left\{ 
\begin{array}{cc}
\left( \int_{H_{n}}|u|^{p}\sigma ^{-\delta p-Q}\ \mathring{\theta}\wedge (d%
\mathring{\theta})^{n}\right) ^{1/p}, & 1\leq p<\infty \\ 
\text{ess sup}_{H_{n}}(\sigma ^{-\delta }|u|), & p=\infty ,%
\end{array}%
\right.
\end{equation}%
are finite. Here $Q=2n+2$ is the homogeneous dimension. The weighted
Folland-Stein spaces $S_{k,\delta }^{p}(H_{n})$ are now defined with respect
to the norm 
\begin{equation}
\Vert u\Vert _{k,p,\delta }:=\sum_{j=0}^{k}\Vert \nabla ^{j}u\Vert
_{p,\delta -j},
\end{equation}%
where $\nabla ^{j}u:=\sum_{|I|=j}\mathring{e}_{I}u,\ I=(i_{1},\cdots ,i_{j})$
a multiindex and $\mathring{e}_{I}u=\mathring{e}_{i_{1}}\cdots \mathring{e}%
_{i_{j}}u$ (see (\ref{7-8a}) for the definition of $\mathring{e}_{j}$). We
can then extend the definition to the sections of $\mathbb{S}^{+}$ over an
asymptotically flat pseudohermitian manifold $N$ with $\nabla $ being a spin
connection.

\begin{proposition}
\label{Draiso} (\cite{Chiu}) Suppose that $(N,J,\theta )$ is an
asymptotically flat, pseudohermitian and spin manifold of dimension 5.
Assume that $J$ is spherical$.$ Then for $0<\eta <4$, the square of the
contact Dirac operator 
\begin{equation}
D_{\xi }^{2}:S_{3,-\eta }^{2}(\mathbb{S}^{+})\longrightarrow S_{1,-\eta
-2}^{2}(\mathbb{S}^{+})  \label{diraop}
\end{equation}%
is an isomorphism, where $S_{k,\delta }^{p}(\mathbb{S}^{+})$ denotes the
weighted Folland-Stein space of sections of $\mathbb{S}^{+}$ over $N$.
\end{proposition}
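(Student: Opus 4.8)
The plan is to establish the isomorphism in \eqref{diraop} by the standard Fredholm-plus-injectivity strategy for elliptic (here subelliptic) operators between weighted function spaces on asymptotically flat manifolds. The crucial structural input is Corollary \ref{WFn2}: since $n=2$, on $\Gamma(\mathbb{S}^+)$ we have the Weitzenbock identity $D_\xi^2 = \nabla^*\nabla + W$, with \emph{no} $\nabla_T$ term. Thus $D_\xi^2$ acting on positive spinors differs from the (nonnegative, formally self-adjoint) sublaplacian-type operator $\nabla^*\nabla$ by a zeroth-order term, and in particular it is a genuine subelliptic operator of ``order 2'' in the Folland-Stein sense, for which a Rumin/Folland-Stein parametrix and the corresponding subelliptic regularity estimates apply. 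This is what makes the mapping property $S^2_{3,-\eta}(\mathbb{S}^+)\to S^2_{1,-\eta-2}(\mathbb{S}^+)$ well-defined and continuous in the first place.

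First I would treat the model operator on the Heisenberg group $H_2$: on $\mathring{\mathbb{S}}^+$ the operator is just (a matrix of copies of) the flat sublaplacian $\mathring{\Delta}_b$, since the Tanaka-Webster curvature of $(\mathring{J},\mathring\theta)$ vanishes and $\mathring\nabla$ is flat. Using the explicit fundamental solution of $\mathring{\Delta}_b$ (Folland's formula, a constant times $\rho^{-2n}=\rho^{-4}$), together with the homogeneity of the dilations on $H_2$ and the weighted Folland-Stein theory, one shows that $\mathring{\Delta}_b:S^2_{3,-\eta}\to S^2_{1,-\eta-2}$ is an isomorphism precisely in the non-exceptional range of weights; the condition $0<\eta<4$ is exactly the statement that $-\eta$ avoids the indicial roots $0$ and $-(Q-2)=-4=-2n$ of $\mathring{\Delta}_b$ (the constant functions and the Green's-function decay rate), so no kernel or cokernel elements appear at the boundary. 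Next, because $(N,J,\theta)$ is asymptotically flat and $J$ is spherical, on $N_\infty$ the geometry is $C^\infty$-close to the Heisenberg model in the sense of \eqref{af1}, and the curvature term $W\in L^1(N)$ is lower order and decays; a standard patching argument (cutoff functions interpolating between the model operator near infinity and an invertible modification on the compact core) then shows $D_\xi^2 = \nabla^*\nabla + W$ is \emph{semi-Fredholm}, in fact Fredholm of index zero between the stated spaces, with a parametrix built from the Heisenberg parametrix plus a compact error.

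It then remains to prove injectivity, since index zero plus injectivity gives surjectivity and hence the isomorphism. Suppose $\psi\in S^2_{3,-\eta}(\mathbb{S}^+)$ with $D_\xi^2\psi=0$. Pairing with $\psi$ and integrating over $N$, the weight $-\eta<0$ forces $\psi\to 0$ at infinity fast enough ($\rho^{-\eta}$ with $\eta>0$) that the boundary term at $S_\Lambda$ in the integration by parts vanishes as $\Lambda\to\infty$; one gets
\begin{equation*}
0=\int_N \langle D_\xi^2\psi,\psi\rangle\, dV = \int_N \big(|\nabla\psi|^2 + W|\psi|^2\big)\, dV .
\end{equation*}
Here the hypothesis $W\geq 0$ (assumed in Theorem \ref{PMT}, and what one needs in the blow-up setting where $W$ comes out nonnegative) forces $\nabla\psi\equiv 0$, so $\psi$ is parallel; but a nonzero parallel spinor has constant pointwise norm, contradicting $\psi\to 0$ at infinity unless $\psi\equiv 0$. (If one wants the isomorphism statement without sign assumption on $W$, one instead invokes the unique continuation property for the subelliptic operator $D_\xi^2$ together with the decay at infinity, or restricts to the case at hand where $W\geq0$ is available.) The main obstacle, and the part that carries the real analytic weight, is the first half: setting up the weighted Folland-Stein / subelliptic Fredholm theory for $D_\xi^2$ on $\mathbb{S}^+$ — i.e. the parametrix construction and the sharp identification of the admissible weight range $0<\eta<4$ with the indicial roots of the Heisenberg sublaplacian — since the injectivity argument, once the boundary term is controlled by the weight, is short. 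This is presumably why the statement is attributed to \cite{Chiu}; here I would cite that analysis and supply the Weitzenbock reduction $D_\xi^2=\nabla^*\nabla+W$ from Corollary \ref{WFn2} as the bridge that reduces the spinorial operator to scalar-type subelliptic analysis.
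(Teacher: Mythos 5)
Your proposal follows essentially the route the paper itself only sketches: reduce $D_\xi^2$ on $\mathbb{S}^+$ to the subelliptic operator $\nabla^*\nabla + W$ via the $n=2$ Weitzenbock identity (Corollary \ref{WFn2}), then apply Bartnik-style weighted Fredholm theory (model-operator analysis, indicial roots, scale-broken estimates), deferring the technical parametrix machinery to \cite{Chiu}. That is exactly the paper's plan, so at the level of strategy you are aligned.

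However, your injectivity step has a concrete error. You assert that the boundary term on $S_\Lambda$ vanishes because $\psi\to 0$ like $\rho^{-\eta}$ with $\eta>0$; but for $n=2$ the homogeneous dimension is $Q=6$, so the Heisenberg sphere $S_\Lambda$ has volume of order $\Lambda^{Q-1}=\Lambda^{5}$, and a spinor with $|\psi|\sim\rho^{-\eta}$, $|\nabla\psi|\sim\rho^{-\eta-1}$ gives a boundary integral of order $\Lambda^{4-2\eta}$, which does \emph{not} tend to zero when $0<\eta\leq 2$. To cover the full stated range $0<\eta<4$ you must first invoke improvement of decay: since $D_\xi^2\psi=0$ outside a compact set and $-\eta$ lies strictly between the indicial roots $0$ and $-4$ that you already identified, $\psi$ is asymptotic near infinity to a $\mathring{\Delta}_b$-harmonic spinor of decay order $\rho^{-4}$ (up to $\epsilon$), after which the boundary term does vanish. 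A second, smaller point that you yourself flag: the sign condition $W\geq 0$ that forces $\nabla\psi\equiv 0$ is not among the hypotheses of Proposition \ref{Draiso} as stated, though it is available in every application (Theorem \ref{PMT} assumes $W\geq 0$, and the blow-up produces $W\equiv 0$). A proof of the proposition exactly as worded would need either to add this hypothesis or to supply a different argument for absence of a decaying kernel.
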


\begin{proof}
(sketch) By Corollary \ref{WFn2} (Weitzenbock formula for $n=2,$ dimension $%
= $ $2n+1$ $=$ $5),$ we have%
\begin{equation}
D_{\xi }^{2}=\nabla ^{\ast }\nabla +W  \label{WBF}
\end{equation}%
on $S_{3,-\eta }^{2}(\mathbb{S}^{+}),$ which is subelliptic. We can then
apply similar ideas in \cite{B} to show that (\ref{diraop}) is an
isomorphism (a subelliptic analogue of Proposition 2.2 in \cite{B}. We refer
the details to a separate paper \cite{Chiu}. \ 
\end{proof}

\begin{corollary}
\label{Sol} With the same assumptions and notations as in Proposition \ref%
{Draiso}, let $\psi _{0}\in $ $\mathbb{S}^{+}$ be a spinor field on $N$
which is constant near the infinity. Then there is a spinor field $\psi \in $
$\mathbb{S}^{+}$ such that 
\begin{eqnarray}
D_{\xi }^{2}\psi &=&0,  \label{MEq} \\
\psi -\psi _{0} &\in &S_{3,-4+\varepsilon }^{2}(\mathbb{S}^{+})\text{ for
small }\varepsilon >0.  \notag
\end{eqnarray}
\end{corollary}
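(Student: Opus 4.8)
The plan is to deduce Corollary \ref{Sol} directly from the isomorphism property established in Proposition \ref{Draiso}, by treating the desired spinor as a correction to the prescribed constant spinor $\psi_0$. First I would set $\psi = \psi_0 + \phi$ and observe that the equation $D_\xi^2 \psi = 0$ is equivalent to $D_\xi^2 \phi = -D_\xi^2 \psi_0$. The point is that $\psi_0$ is constant near infinity, and near infinity the pseudohermitian structure $(J,\theta)$ is asymptotic to the flat Heisenberg structure $(\mathring J, \mathring\theta)$; since a constant spinor is parallel with respect to the flat spin connection, $D_\xi^2 \psi_0$ is supported (up to fast-decaying terms coming from the difference of the two structures) in a compact region together with an error term that decays like the relevant power of $\rho$. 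Using the asymptotic expansions (\ref{af1})--(\ref{af9}) one checks that $D_\xi^2 \psi_0 \in S^2_{1,-\eta-2}(\mathbb{S}^+)$ for a suitable weight; concretely, since $\nabla \psi_0$ is $O(\rho^{-2n-1}) = O(\rho^{-5})$ and one more derivative produces $O(\rho^{-6})$-type decay, we may take $\eta$ close to $4$, say $\eta = 4-\varepsilon$, so that $-\eta - 2 = -6+\varepsilon$ matches the decay of the source term.

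Next I would invoke Proposition \ref{Draiso}: for $0 < \eta < 4$, the operator
\begin{equation*}
D_\xi^2 : S^2_{3,-\eta}(\mathbb{S}^+) \longrightarrow S^2_{1,-\eta-2}(\mathbb{S}^+)
\end{equation*}
is an isomorphism. Applying this with $\eta = 4-\varepsilon$ and with right-hand side $-D_\xi^2\psi_0 \in S^2_{1,-\eta-2}(\mathbb{S}^+)$, we obtain a unique $\phi \in S^2_{3,-\eta}(\mathbb{S}^+) = S^2_{3,-4+\varepsilon}(\mathbb{S}^+)$ solving $D_\xi^2\phi = -D_\xi^2\psi_0$. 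Setting $\psi := \psi_0 + \phi$ then gives $D_\xi^2\psi = 0$ and $\psi - \psi_0 = \phi \in S^2_{3,-4+\varepsilon}(\mathbb{S}^+)$, which is exactly (\ref{MEq}). So the corollary follows almost formally once the mapping property and the decay estimate on the source term are in place.

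The step I expect to require the most care is the bookkeeping that shows $D_\xi^2\psi_0$ genuinely lands in the target weighted space $S^2_{1,-\eta-2}(\mathbb{S}^+)$ for $\eta$ admissible (i.e. strictly less than $4$), and in particular that the worst-decaying contribution is $O(\rho^{-2n-1})$-type rather than something slower. This hinges on the precise asymptotic flatness rates in Definition 3.1 and Lemma 3.6: the connection forms $\omega_j{}^k$ differ from the flat ones by terms of order $\rho^{-2n-2}$ in the $dz$-directions and $\rho^{-2n-2}$ (or better) in the $\mathring\theta$-direction, so $\nabla_{e_a}\psi_0 = (\text{flat part}) \cdot \psi_0 + O(\rho^{-2n-1})$, and since $\psi_0$ is constant near infinity the flat part vanishes there; applying $D_\xi$ once more and accounting for the compactly supported region where $\psi_0$ transitions from constant to its behavior on $N_0$ yields a section in $S^2_{1,\delta}(\mathbb{S}^+)$ with $\delta = -2n-2 + (\text{small}) = -6+\varepsilon$. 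Since $n = 2$ and $Q = 2n+2 = 6$, one must also make sure the weight is chosen so that the $L^2$-integrability against $\sigma^{-\delta p - Q}$ holds, which again forces the restriction $\varepsilon > 0$ small, consistent with the statement. Once this is verified, the rest is an immediate application of the isomorphism.
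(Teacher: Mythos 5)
Your proposal is correct and follows essentially the same route as the paper: write $\psi=\psi_0+\phi$, use the asymptotic flatness rates to check that $D_\xi\psi_0$ decays like $\rho^{-5}$ so that $D_\xi^2\psi_0\in S^2_{1,-6+\varepsilon}(\mathbb{S}^+)$, and then apply the isomorphism of Proposition \ref{Draiso} with $\eta=4-\varepsilon$ to solve $D_\xi^2\phi=-D_\xi^2\psi_0$ with $\phi\in S^2_{3,-4+\varepsilon}(\mathbb{S}^+)$. This is exactly the paper's argument, including the choice of weight.
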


\begin{proof}
The asymptotic conditions imply that connection forms acting on orthonormal
frame fields are $O(\rho ^{-5}).$ It follows that $D_{\xi }\psi _{0}$ $\in $ 
$S_{2,-5}^{2}(\mathbb{S}^{+})$ and hence $D_{\xi }^{2}\psi _{0}\in
S_{1,-6+\varepsilon }^{2}(\mathbb{S}^{+})$ for small $\varepsilon >0$.
Therefore by Proposition \ref{Draiso} we find a unique $\psi
_{-4+\varepsilon }\in S_{3,-4+\varepsilon }^{2}(\mathbb{S}^{+})$ by taking $%
\eta $ $=$ $4-\varepsilon $ such that $D_{\xi }^{2}\psi _{-4+\varepsilon
}=-D_{\xi }^{2}\psi _{0}$. Then $\psi =\psi _{0}$ $+$ $\psi _{-4+\varepsilon
}$ is the required spinor field.
\end{proof}

\begin{proposition}
\label{RegDO} (Regularity for the decay order) With the same assumptions and
notations as in Proposition \ref{Draiso} and Corollary \ref{Sol}, we have $%
D_{\xi }\psi $ $\in $ $S_{2,-6+\varepsilon }^{2}(\mathbb{S}^{+}).$
\end{proposition}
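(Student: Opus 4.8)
The plan is to bootstrap the decay of $D_\xi\psi$ using the equation $D_\xi^2\psi=0$ together with the improved decay of $\psi-\psi_0$ already obtained in Corollary \ref{Sol}. First I would set $\phi:=D_\xi\psi$ and observe that $\phi$ satisfies $D_\xi\phi=0$, so in particular $D_\xi^2(\cdot)$ annihilates $\phi$ as well; moreover $\phi=D_\xi\psi_0+D_\xi\psi_{-4+\varepsilon}$, where by the asymptotic conditions the connection forms acting on the orthonormal frame are $O(\rho^{-5})$, so $D_\xi\psi_0\in S^2_{2,-5}(\mathbb S^+)$ (as already used in the proof of Corollary \ref{Sol}), and $D_\xi\psi_{-4+\varepsilon}\in S^2_{2,-5+\varepsilon}(\mathbb S^+)$ since $\psi_{-4+\varepsilon}\in S^2_{3,-4+\varepsilon}(\mathbb S^+)$ and $D_\xi$ lowers the weight by one. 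Thus at the outset one knows $\phi\in S^2_{2,-5+\varepsilon}(\mathbb S^+)$, which is one full decay order short of the desired $S^2_{2,-6+\varepsilon}(\mathbb S^+)$.

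To gain the extra order I would feed $\phi$ back into the subelliptic isomorphism of Proposition \ref{Draiso}. Since $D_\xi\phi=0$, a fortiori $D_\xi^2\phi=0$, so $D_\xi^2\phi=0\in S^2_{1,-\eta-2}(\mathbb S^+)$ for every admissible weight $\eta$; the point is that the Fredholm/isomorphism statement of Proposition \ref{Draiso} on $S^2_{3,-\eta}\to S^2_{1,-\eta-2}$, applied in the range $0<\eta<4$, forces any solution of $D_\xi^2\phi=0$ that already lies in some $S^2_{3,-\eta_0}$ with $\eta_0<4$ to be the zero solution of the homogeneous equation only if no nontrivial kernel survives — instead the correct mechanism is that $\phi$ minus its constant-spinor limit lies in the image of $(D_\xi^2)^{-1}$ applied to a right-hand side with higher decay. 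Concretely: write $\phi=\phi_\infty+(\phi-\phi_\infty)$ where $\phi_\infty$ is the constant-spinor asymptotic value of $\phi$ (which is $0$ here because $\psi_0$ is constant near infinity, hence $D_\xi\psi_0$ already decays, and $\psi_{-4+\varepsilon}$ decays, so in fact $\phi_\infty=0$). Then $\phi\in S^2_{2,-5+\varepsilon}$ and $D_\xi^2\phi=0$; applying the isomorphism $(D_\xi^2)^{-1}:S^2_{1,-\eta-2}\to S^2_{3,-\eta}$ with $\eta=6-\varepsilon$ — which is \emph{not} in the range $(0,4)$ — is illegitimate, so instead I would use the Weitzenb\"ock identity $D_\xi^2=\nabla^*\nabla+W$ from Corollary \ref{WFn2} directly: from $D_\xi\phi=0$ one gets $(\nabla^*\nabla+W)\phi=0$, and then standard weighted subelliptic regularity (the interior Folland–Stein estimates together with the decay $W\in L^1$ and $W=O(\rho^{-2n-2})=O(\rho^{-6})$ from the asymptotically flat structure) upgrades the decay of $\phi$ by matching it against the decay of the inhomogeneous term $-W\phi+(\text{connection terms})$, which is $O(\rho^{-6}\cdot\rho^{-3})$-type and in particular lies in $S^2_{0,-6+\varepsilon}$; the weighted elliptic estimate then gives $\phi\in S^2_{2,-6+\varepsilon}(\mathbb S^+)$.

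The main obstacle will be the second paragraph's bootstrap: one must be careful that the weight $-6+\varepsilon$ is \emph{below} the range $(-4,0)$ where Proposition \ref{Draiso} is stated, so one cannot simply invert $D_\xi^2$ at that weight. The resolution is to argue that $\phi$ is an \emph{honest} solution of the homogeneous equation $D_\xi^2\phi=0$ (not merely approximately), so its decay is governed by the asymptotics of harmonic-type spinors for the operator $\nabla^*\nabla+W$ on the model $H_2$, for which the only obstruction to arbitrarily fast decay is the constant-spinor limit $\phi_\infty$; since $\phi_\infty=0$, $\phi$ decays like the first nonconstant "homogeneous solution," whose rate is at least $\rho^{-6}$ (matching the $\rho^{-2n}=\rho^{-4}$ Green's-function rate shifted by the order-two operator, or more precisely dictated by the leading term $c_nA\rho^{-2n}$ in \eqref{af1} entering the connection forms). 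I would make this precise by the same weighted-space Fredholm bootstrap used in \cite{B} and \cite{Chiu}: for weights in a half-line below the exceptional set, $D_\xi^2$ is injective, and any element of its kernel at a high weight that also lies in a lower-weight space must actually lie in the lower-weight space — an elliptic-type "unique continuation at infinity" for the decay order. Granting that, $\phi=D_\xi\psi\in S^2_{2,-6+\varepsilon}(\mathbb S^+)$ as claimed.
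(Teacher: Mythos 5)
Your setup is sound: you correctly compute $D_\xi\psi=D_\xi\psi_0+D_\xi\psi_{-4+\varepsilon}\in S^2_{2,-5+\varepsilon}$, correctly observe that $\phi:=D_\xi\psi$ satisfies $D_\xi\phi=0$, correctly identify that the target weight $-6+\varepsilon$ lies outside the range $0<\eta<4$ of Proposition~\ref{Draiso}, and correctly conclude that one cannot naively invert $D_\xi^2$ there. But the proof does not close, for two reasons.

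First, there is a parity error. Since $D_\xi:\Gamma(\mathbb S^\pm)\to\Gamma(\mathbb S^\mp)$ and $\psi\in\mathbb S^+$, the spinor $\phi=D_\xi\psi$ lives in $\mathbb S^-$, not $\mathbb S^+$. The Weitzenb\"ock identity $D_\xi^2=\nabla^*\nabla+W$ of Corollary~\ref{WFn2} is a consequence of (\ref{keyfor}), which holds only on $S^+(4)$; on $S^-(4)$ the operator $\sum_\beta e_\beta e_{2+\beta}$ is an invertible rotation (see (\ref{keyformula})), so on $\mathbb S^-$ the full formula (\ref{WF}) keeps the term $-2\sum_\beta e_\beta e_{n+\beta}\nabla_T$. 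Hence the equation $(\nabla^*\nabla+W)\phi=0$ that you use as the engine of your bootstrap is simply not satisfied by $\phi$. Likewise the isomorphism in Proposition~\ref{Draiso} is stated on $\mathbb S^+$, so neither it nor your displayed Weitzenb\"ock identity can be applied to $\phi$ as written.

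Second, even setting aside the parity issue, the decisive step is left unproved. You reduce the claim to a ``unique continuation at infinity'' principle for the decay order and then write ``Granting that, $\phi\in S^2_{2,-6+\varepsilon}$ as claimed.'' That is precisely what has to be established. The paper does it concretely: it invokes the scale-broken estimate (\ref{sbe}), which does apply at the weight $\delta=-6+\varepsilon$ but requires a priori membership $u\in S^2_{2,\delta}$; to supply that, the paper mollifies. One extends $\psi$ smoothly across $B_R$ to $\tilde\psi$ on $H_2$, forms $(D_\xi\tilde\psi)_{(r)}:=(\chi_r(D_\xi\tilde\psi))*\varphi_{1/r}$ via Heisenberg convolution (so that $(D_\xi\tilde\psi)_{(r)}$ lies in every weighted space), computes $D_\xi^2(\chi_r(D_\xi\tilde\psi))$ using $D_\xi^2\tilde\psi=0$ near infinity to isolate commutator terms supported in annuli, shows $\|D_\xi^2(\chi_r(D_\xi\tilde\psi))\|_{0,2,-8+\varepsilon}$ is bounded uniformly in $r$, and then uses (\ref{sbe}) together with translation invariance of $D_\xi$ to prove $\{(D_\xi\tilde\psi)_{(r)}\}_r$ is Cauchy in $S^2_{2,-6+\varepsilon}$, whose limit is $D_\xi\tilde\psi=D_\xi\psi$ on the end. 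Your proposal contains none of this mechanism, and the vague appeal to asymptotics of harmonic-type spinors on $H_2$ is not a substitute, particularly since the model operator on $\mathbb S^-$ is $\Delta_b$ plus a nontrivial $\nabla_T$ term, not the pure sublaplacian.
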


\begin{proof}
From Corollary \ref{Sol} and the asymptotic conditions, we learn that $%
D_{\xi }\psi $ $=$ $D_{\xi }\psi _{0}$ $+$ $D_{\xi }\psi _{-4+\varepsilon }$ 
$\in $ $S_{2,-5+\varepsilon }^{2}$ (omitting $\mathbb{S}^{+}).$ We want to
show that $D_{\xi }\psi $ gains one more decay order by applying the
following scale-broken estimate: for $u\in S_{2,\delta }^{2}(\mathbb{S}^{+})$
on $H_{2}$ it holds that%
\begin{equation}
||u||_{2,2,\delta }\leq C(||D_{\xi }^{2}u||_{0,2,\delta -2}+||u||_{L^{2}(B_{%
\bar{R}})})  \label{sbe}
\end{equation}%
where $B_{\bar{R}}$ is a Heisenberg ball of radius $\bar{R}$ in $H_{2}$ (see 
\cite{Chiu}). Let $\chi _{r}$ denote a cutoff function on $H_{2}$ such that $%
\chi _{r}=1$ on $B_{r^{2}}$ and $\chi _{r}=0$ on $H_{2}\backslash B_{2r^{2}}$
where $B_{a}$ denotes the Heisenberg ball of radius $a.$ Let $\varphi _{1/r}$
be a family of ($C^{\infty })$ smooth functions with compact support $\bar{B}%
_{1/r}$ $\subset $ $H_{2},$ tending to $\delta _{0},$ the delta function at
the origin as $r\rightarrow \infty .$ By identifying the end $N_{\infty }$
with $H_{2}\backslash B_{R}$ through asymptotic coordinates$,$ we extend $%
\psi $ into $B_{R}$ smoothly and denote the extension (now defined on $%
H_{2}) $ by $\tilde{\psi}.$ Consider%
\begin{equation*}
(D_{\xi }\tilde{\psi})_{(r)}:=(\chi _{r}(D_{\xi }\tilde{\psi}))\ast \varphi
_{1/r}\text{ on }H_{2}
\end{equation*}%
where "$\ast $" denotes the convolution with respect to the Heisenberg
multiplication (see \cite[Ch.10]{CS}). Observe that $(D_{\xi }\tilde{\psi}%
)_{(r)}$ has the following properties:%
\begin{eqnarray}
(1)\text{ }(D_{\xi }\tilde{\psi})_{(r)}(x) &\rightarrow &D_{\xi }\tilde{\psi}%
(x)\text{ as }r\rightarrow \infty \text{ since }\chi _{r}\rightarrow
1,\varphi _{1/r}\rightarrow \delta _{0},  \label{moll} \\
(2)\text{ (}D_{\xi }\tilde{\psi})_{(r)}(x)\text{ at }x &=&\infty \text{ has
any decay order for a given }r\text{.}  \notag
\end{eqnarray}%
Note that ($D_{\xi }\tilde{\psi})_{(r)}\in S_{2,\delta }^{2}(\mathbb{S}^{+})$
on $H_{2}$ for any $\delta \in \mathbb{R}$ by (2) in (\ref{moll}). So we can
substitute $u=(D_{\xi }\tilde{\psi})_{(r)}$ into (\ref{sbe}). Observing that 
$D_{\xi }^{2}\tilde{\psi}=D_{\xi }^{2}\psi =0$ on $H_{2}\backslash
B_{R}\cong N_{\infty }$, we compute%
\begin{eqnarray*}
D_{\xi }(\chi _{r}(D_{\xi }\tilde{\psi})) &=&e_{a}(\chi _{r})e_{a}\cdot
D_{\xi }\tilde{\psi}+\chi _{r}D_{\xi }^{2}\tilde{\psi} \\
&=&e_{a}(\chi _{r})e_{a}\cdot D_{\xi }\tilde{\psi}\text{ for }x\in
H_{2}\backslash B_{R}\cong N_{\infty }
\end{eqnarray*}%
and 
\begin{eqnarray}
D_{\xi }^{2}(\chi _{r}(D_{\xi }\tilde{\psi})) &=&e_{b}e_{a}(\chi
_{r})e_{b}\cdot e_{a}\cdot D_{\xi }\tilde{\psi}  \label{D2-0} \\
&&+e_{a}(\chi _{r})e_{b}\cdot \nabla _{e_{b}}^{LC}e_{a}\cdot D_{\xi }\tilde{%
\psi}  \notag \\
&&+e_{a}(\chi _{r})e_{b}\cdot e_{a}\cdot \nabla _{e_{b}}(D_{\xi }\tilde{\psi}%
).  \notag
\end{eqnarray}%
Given a point $p,$ we can choose an orthonormal frame field such that $%
\nabla _{e_{b}}^{LC}e_{a}$ $=$ $0$ at $p.$ Together with $e_{b}e_{a}$ $=$ $%
-e_{a}e_{b}-2\delta _{ab}$ we reduce (\ref{D2-0}) to%
\begin{eqnarray}
D_{\xi }^{2}(\chi _{r}(D_{\xi }\tilde{\psi})) &=&e_{b}e_{a}(\chi
_{r})e_{b}\cdot e_{a}\cdot D_{\xi }\tilde{\psi}  \label{D2-1} \\
&&+e_{a}(\chi _{r})e_{a}\cdot D_{\xi }^{2}\tilde{\psi}-2e_{a}(\chi
_{r})\nabla _{e_{a}}(D_{\xi }\tilde{\psi})  \notag \\
&=&e_{b}e_{a}(\chi _{r})e_{b}\cdot e_{a}\cdot D_{\xi }\tilde{\psi}%
-2e_{a}(\chi _{r})\nabla _{e_{a}}(D_{\xi }\tilde{\psi})  \notag
\end{eqnarray}%
for $p\in H_{2}\backslash B_{R}\cong N_{\infty }.$ Note that in (\ref{D2-1}) 
$e_{a}(\chi _{r})$ $\sim $ $\frac{1}{r^{2}},$ $e_{b}e_{a}(\chi _{r})$ $\sim $
$\frac{1}{r^{3}}$ as $r$ large and $D_{\xi }\tilde{\psi}$ (resp. $\nabla
_{e_{a}}(D_{\xi }\tilde{\psi}))$ decays at $\infty $ in order $%
-5+\varepsilon $ (resp. $-6+\varepsilon )$. It follows that $||D_{\xi
}^{2}(\chi _{r}(D_{\xi }\tilde{\psi}))||_{0,2,-8+\varepsilon }$ $\leq $ $C,$
independent of $r.$ So (see \cite[Ch.10]{CS})) in view of (\ref{D2-1}) it
holds that%
\begin{equation*}
D_{\xi ,x}^{2}(D_{\xi }\tilde{\psi})_{(r)}(x)=\int_{y\in B_{1/r}\subset
H_{2}}D_{\xi ,x}^{2}(\chi _{r}(D_{\xi }\tilde{\psi}))(xy^{-1})\varphi
_{1/r}(y)dV(y)
\end{equation*}%
tends to $0$ (resp. $D_{\xi ,x}^{2}(D_{\xi }\tilde{\psi})(x))$ for $x$ $\in $
$H_{2}\backslash B_{R}\cong N_{\infty }$ (resp. for $x$ $\in $ $B_{R}$) as $%
r\rightarrow \infty $ and it converges in the norm $||$ $\cdot $ $%
||_{0,2,-8+\varepsilon }.$ Here we have used the Heisenberg translation
invariance of $D_{\xi }.$ On the other hand, it is easy to get $(D_{\xi }%
\tilde{\psi})_{(r)}\rightarrow D_{\xi }\tilde{\psi}$ in $L^{2}(B_{\bar{R}})$
as $r\rightarrow \infty .$ Now for $r_{1},r_{2}$ large enough, by (\ref{sbe}%
) we have%
\begin{eqnarray}
&&||(D_{\xi }\tilde{\psi})_{(r_{1})}-(D_{\xi }\tilde{\psi}%
)_{(r_{2})}||_{2,2,-6+\varepsilon }  \label{sbe-1} \\
&\leq &C(||D_{\xi }^{2}(D_{\xi }\tilde{\psi})_{(r_{1})}-D_{\xi }^{2}(D_{\xi }%
\tilde{\psi})_{(r_{2})}||_{0,2,-8+\varepsilon }+||(D_{\xi }\tilde{\psi}%
)_{(r_{1})}-(D_{\xi }\tilde{\psi})_{(r_{2})}||_{L^{2}(B_{\bar{R}})}.  \notag
\end{eqnarray}%
But the right hand side of (\ref{sbe-1}) is small since $D_{\xi }^{2}(D_{\xi
}\tilde{\psi})_{(r)}$ and $(D_{\xi }\tilde{\psi})_{(r)}$ are Cauchy in $r$
with respect to their respective norms. Therefore \{($D_{\xi }\tilde{\psi}%
)_{(r)}\}_{r}$ is a Cauchy sequence in $r$ with respect to the norm $||\cdot
||_{2,2,-6+\varepsilon }.$ Thus as $r\rightarrow \infty ,$ ($D_{\xi }\tilde{%
\psi})_{(r)}\rightarrow D_{\xi }\tilde{\psi}$ $\in $ $S_{2,-6+\varepsilon
}^{2}.$ Since $D_{\xi }\psi $ $=$ $D_{\xi }\tilde{\psi}$ on $H_{2}\backslash
B_{R}\cong N_{\infty }.$ It follows that $D_{\xi }\psi $ $\in $ $%
S_{2,-6+\varepsilon }^{2}.$
\end{proof}

Choose a constant spinor $\psi _{0}$ $\in $ $\mathbb{S}^{+}$ with $|\psi
_{0}|=1$ at infinity, and extend it to a smooth spinor on the whole space $N$%
. By (\ref{MEq}) in Corollary \ref{Sol} we can find a spinor field $\psi $ $%
= $ $\psi _{0}$ $+$ $\psi _{-4+\varepsilon }$ satisfying $D_{\xi }^{2}\psi
=0 $ and $\psi _{-4+\varepsilon }$ $\in $ $S_{2,-4+\varepsilon }^{2}(\mathbb{%
S}^{+})$. Now applying the Weitzenbock formula (\ref{WF5D}) to $\psi $ and
integrating by parts over the region $N_{R}=\{\rho \leq R\}$ (we abuse the
notation; more accurately $N_{R}$ $=$ $N\backslash N_{\infty }$, $N_{\infty
} $ is diffeomorphic to $\{\rho >R\}$ in the Heisenberg group), we have

\begin{equation}
\begin{split}
& \int_{N_{R}}|\nabla \psi |^{2}+W|\psi |^{2}dV\text{ (}dV:=\theta \wedge
(d\theta )^{2}\text{)} \\
=& Re\int_{S_{R}}\big<\psi ,\nabla _{i}\psi \big>e_{i}\lrcorner dV\text{ (}%
S_{R}:=\partial N_{R}=\{\rho =R\}\text{)} \\
=& Re\int_{S_{R}}\left( \big<\psi _{0},\nabla _{i}\psi _{0}\big>+\big<\psi
_{0},\nabla _{i}\psi _{-4+\varepsilon }\big>+\big<\psi _{-4+\varepsilon
},\nabla _{i}\psi _{0}\big>+\big<\psi _{-4+\varepsilon },\nabla _{i}\psi
_{-4+\varepsilon }\big>\right) e_{i}\lrcorner dV.
\end{split}
\label{pmt1}
\end{equation}

Since 
\begin{equation}
\begin{split}
\big<\psi _{0},[e_{j},e_{k}]\psi _{0}\big>& =\big<\psi
_{0},(e_{j}e_{k}-e_{k}e_{j})\psi _{0}\big> \\
& =\big<e_{j}\psi _{0},-e_{k}\psi _{0}\big>+\big<e_{k}\psi _{0},e_{j}\psi
_{0}\big> \\
& =\big<e_{k}e_{j}\psi _{0},\psi _{0}\big>+\big<-e_{j}e_{k}\psi _{0},\psi
_{0}\big> \\
& =\big<\lbrack e_{k},e_{j}]\psi _{0},\psi _{0}\big>=-\big<\lbrack
e_{j},e_{k}]\psi _{0},\psi _{0}\big> \\
& =-\overline{\big<\psi _{0},[e_{j},e_{k}]\psi _{0}\big>},
\end{split}%
\end{equation}%
we have $Re\big<\psi _{0},[e_{j},e_{k}]\psi _{0}\big>=0$, and hence the
first term in the RHS of (\ref{pmt1}) vanishes as $R\rightarrow \infty $.
Furthermore, since $\psi _{-4+\varepsilon }=O(\rho ^{-4+\varepsilon }),\
\nabla \psi _{-4+\varepsilon }=O(\rho ^{-5+\varepsilon })$ and $\nabla \psi
_{0}=O(\rho ^{-5+\varepsilon })$, the third and the fourth terms in the RHS
of (\ref{pmt1}) also vanish.

Finally, we are going to show that the second term in the RHS of (\ref{pmt1}%
) will catch the p-mass as $R\rightarrow \infty $. For this, let $L_{i}$
denote the operator 
\begin{equation}
L_{i}=\nabla _{i}+e_{i}D_{\xi }.
\end{equation}%
Noting that $e_{i}e_{j}=\frac{1}{2}[e_{i},e_{j}]-\delta _{ij}$, we have 
\begin{equation}
L_{i}=(\delta _{ij}+e_{i}e_{j})\nabla _{j}=\frac{1}{2}[e_{i},e_{j}]\nabla
_{j}.
\end{equation}%
Let $\alpha $ denote the 3-form $\big<\lbrack e_{i},e_{j}]\psi _{0},\psi
_{-4+\varepsilon }\big>e_{i}\lrcorner e_{j}\lrcorner dV$. Then 
\begin{equation}
d\alpha =-4\left( \big<L_{i}\psi _{0},\psi _{-4+\varepsilon }\big>-\big<\psi
_{0},L_{i}\psi _{-4+\varepsilon }\big>\right) e_{i}\lrcorner dV.
\end{equation}%
Therefore, by Stokes' theorem we have 
\begin{equation}
\int_{S_{R}}\big<L_{i}\psi _{0},\psi _{-4+\varepsilon }\big>e_{i}\lrcorner
dV.=\int_{S_{R}}\big<\psi _{0},L_{i}\psi _{-4+\varepsilon }\big>%
e_{i}\lrcorner dV,
\end{equation}%
hence the second term in the RHS of (\ref{pmt1}) becomes 
\begin{equation}
\begin{split}
-Re\int_{S_{R}}\big<\psi _{0},\nabla _{i}\psi _{-4+\varepsilon }\big>%
e_{i}\lrcorner dV& =Re\int_{S_{R}}\big<\psi _{0},(e_{i}D_{\xi }-L_{i})\psi
_{-4+\varepsilon }\big>e_{i}\lrcorner dV \\
& =Re\int_{S_{R}}\left( \big<\psi _{0},-e_{i}D_{\xi }\psi _{0}\big>-\big<%
L_{i}\psi _{0},\psi _{-4+\varepsilon }\big>+O(\rho ^{-6+\varepsilon
})\right) e_{i}\lrcorner dV,
\end{split}
\label{pmt2}
\end{equation}%
where in the last equality, we have used the fact $D_{\xi }\psi
_{-4+\varepsilon }=-D_{\xi }\psi _{0}$ + $O(\rho ^{-6+\varepsilon })$ by
Proposition \ref{RegDO}. As before, $\big<L_{i}\psi _{0},\psi
_{-4+\varepsilon }\big>=O(\rho ^{-9+2\varepsilon })$, so the second and
third terms in the RHS of (\ref{pmt2}) vanishes as $R\rightarrow \infty $.
On the other hand 
\begin{equation}
e_{i}D_{\xi }\psi _{0}=e_{i}e_{k}\nabla _{k}\psi _{0}=e_{i}e_{k}\left(
e_{k}\psi _{0}-\frac{1}{4}\sum_{m,l=1}^{2n}\omega
_{m}{}^{l}(e_{k})e_{l}e_{m}\psi _{0}\right) .  \label{pmt3}
\end{equation}%
Substituting (\ref{pmt3}) into (\ref{pmt2}), we get 
\begin{equation}
\begin{split}
& \ \ \ -Re\int_{S_{R}}\big<\psi _{0},\nabla _{i}\psi _{-4+\varepsilon }\big>%
e_{i}\lrcorner dV \\
& =-\int_{S_{R}}\frac{1}{4}\sum_{m,l=1}^{2n}\omega _{m}{}^{l}(e_{k})Re\big<%
\psi _{0},e_{i}e_{k}e_{l}e_{m}\psi _{0}\big>e_{i}\lrcorner dV \\
& =\int_{S_{R}}\frac{1}{4}\omega _{k}{}^{i}(e_{k})|\psi
_{0}|^{2}e_{i}\lrcorner dV+\sum_{m\neq i}\sum_{l\neq k}\int_{S_{R}}\frac{1}{4%
}\omega _{l}{}^{m}(e_{k})Re\big<\psi _{0},e_{i}e_{k}e_{l}e_{m}\psi _{0}\big>%
e_{i}\lrcorner dV,
\end{split}
\label{pmt4}
\end{equation}%
where the first integral on the RHS of (\ref{pmt4}) is taken over all terms
with {m=i} and $l=k$ in the middle term, and the last integral in (\ref{pmt4}%
) is taken over all terms with either $m\neq i$ or $l\neq k$, and hence $%
m\neq i$ and $l\neq k$ (This is because that $Re\big<\psi ,[e_{j},e_{k}]\psi %
\big>=0$ for any spinor $\psi $). In addition, since $\omega _{l}{}^{l}=0$,
the last integral survives only for $m\neq i,l\neq k,l\neq m$ as well as $%
i\neq k$. It follows that 
\begin{equation}
\begin{split}
& \sum_{m\neq i}\sum_{l\neq k}\int_{S_{R}}\frac{1}{4}\omega
_{l}{}^{m}(e_{k})Re\big<\psi _{0},e_{i}e_{k}e_{l}e_{m}\psi _{0}\big>%
e_{i}\lrcorner dV \\
=& \sum_{i\neq k,i\neq m,l\neq k,l\neq m}\int_{S_{R}}\frac{1}{4}\omega
_{l}{}^{m}(e_{k})Re\big<\psi _{0},e_{i}e_{k}e_{l}e_{m}\psi _{0}\big>%
e_{i}\lrcorner dV \\
=& \int_{S_{R}}\frac{1}{4}\omega _{k}{}^{i}(e_{k})|\psi
_{0}|^{2}e_{i}\lrcorner dV \\
& \ +\sum_{(i,k,l.m)\in I_{(2)}}\int_{S_{R}}\frac{1}{4}\omega
_{l}{}^{m}(e_{k})Re\big<\psi _{0},e_{i}e_{k}e_{l}e_{m}\psi _{0}\big>%
e_{i}\lrcorner dV
\end{split}
\label{pmt5}
\end{equation}%
where the index set $I_{(n)}$ is defined as follows:%
\begin{eqnarray}
I_{(n)} &:=&\{(i,k,l,m):i\neq k,i\neq l,i\neq m,k\neq l,k\neq m,
\label{index} \\
&&\ \ \ \ \ l \neq m,\text{ }1\leq i,k,l,m\leq 2n\}.  \notag
\end{eqnarray}%
Substituting (\ref{pmt5}) into (\ref{pmt4}) gives 
\begin{equation}
\begin{split}
& \ \ \ -Re\int_{S_{R}}\big<\psi _{0},\nabla _{i}\psi _{-4+\varepsilon }\big>%
e_{i}\lrcorner dV \\
& =-\int_{S_{R}}\frac{1}{4}\sum_{m,l=1}^{2n}\omega _{m}{}^{l}(e_{k})Re\big<%
\psi _{0},e_{i}e_{k}e_{l}e_{m}\psi _{0}\big>e_{i}\lrcorner dV \\
& =\int_{S_{R}}\frac{1}{2}\omega _{k}{}^{i}(e_{k})|\psi
_{0}|^{2}e_{i}\lrcorner dV \\
& \ +\sum_{(i,k,l.m)\in I_{(2)}}\int_{S_{R}}\frac{1}{4}\omega
_{l}{}^{m}(e_{k})Re\big<\psi _{0},e_{i}e_{k}e_{l}e_{m}\psi _{0}\big>%
e_{i}\lrcorner dV
\end{split}
\label{pmt6}
\end{equation}

Putting (\ref{pmt6}) into (\ref{pmt1}), letting $R\rightarrow \infty $ and
using (\ref{af11}), (\ref{af21}) and Lemma \ref{pmt7} below, we obtain a
Witten-type formula%
\begin{eqnarray}
\int_{N}|\nabla \psi |^{2}+W|\psi |^{2}dV &=&\frac{1}{2}\widetilde{m}%
(J,\theta )+16(c_{2}-\tilde{c}_{2})\alpha _{2}\Omega _{2}A  \label{WTF} \\
&=&\Big(16\big[(2\sqrt{2}+1)c_{2}+(\sqrt{2}-1)\tilde{c}_{2}\big]\alpha
_{2}\Omega _{2}\Big)A.  \notag
\end{eqnarray}%
This shows that $A\geq 0$, and hence the p-mass is nonnegative by (\ref{af12}%
). In the proof above we have used the following result.

\begin{lemma}
\label{pmt7} For the case $n=2$, it holds that%
\begin{equation}
\begin{split}
& \sum_{(i,k,l,m)\in I_{(2)}}\int_{S_{R}}\frac{1}{4}\omega
_{l}{}^{m}(e_{k})Re\big<\psi _{0},e_{i}e_{k}e_{l}e_{m}\psi _{0}\big>%
e_{i}\lrcorner dV \\
=& 16(c_{2}-\tilde{c}_{2})\alpha _{2}\Omega _{2}A,\ \ \text{as}\
R\rightarrow \infty ,
\end{split}
\label{4-18}
\end{equation}
\end{lemma}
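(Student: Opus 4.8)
The plan is to reduce (\ref{4-18}) to a boundary integral of exactly the type evaluated in Lemma \ref{CRmass}, in three stages: an algebraic simplification of the fourfold Clifford product, a combinatorial collapse of the sum over $I_{(2)}$, and the insertion of the asymptotic expansions (\ref{af7})--(\ref{af8}). First, the \emph{algebra}: since $n=2$ we have $2n=4$, so every $(i,k,l,m)\in I_{(2)}$ consists of four distinct indices, hence $\{i,k,l,m\}=\{1,2,3,4\}$. From (\ref{keyfor}) we have $e_{1}e_{3}=-e_{2}e_{4}$ on $\mathbb{S}^{+}$; combined with $e_{a}e_{b}=-e_{b}e_{a}$ $(a\neq b)$ and $e_{a}e_{a}=-1$ this gives
\[
e_{1}e_{2}e_{3}e_{4}=-(e_{1}e_{3})(e_{2}e_{4})=(e_{2}e_{4})^{2}=-\,\mathrm{Id}\qquad\text{on }\mathbb{S}^{+}.
\]
Hence, for $(i,k,l,m)\in I_{(2)}$, $e_{i}e_{k}e_{l}e_{m}=-\varepsilon_{iklm}\,\mathrm{Id}$ on $\mathbb{S}^{+}$, where $\varepsilon_{iklm}$ is the signature of the permutation carrying $(1,2,3,4)$ to $(i,k,l,m)$. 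Since $\psi_{0}$ is the fixed spinor with $|\psi_{0}|\equiv 1$ in the asymptotic region, $Re\langle\psi_{0},e_{i}e_{k}e_{l}e_{m}\psi_{0}\rangle=-\varepsilon_{iklm}$ on $S_{R}$ for $R$ large.

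Next, the \emph{combinatorics}. For fixed $i$ write $\{a,b,c\}=\{1,2,3,4\}\setminus\{i\}$ with $a<b<c$; then $\varepsilon_{iklm}=(-1)^{i-1}\sgn(k,l,m)$, where $\sgn(k,l,m)$ is the sign of $(k,l,m)$ as a permutation of $(a,b,c)$. Summing over the six orderings of $(a,b,c)$ and using the antisymmetry $\omega_{l}{}^{m}=-\omega_{m}{}^{l}$ of the pseudohermitian connection forms, the six terms collapse in pairs:
\[
\sum_{(k,l,m)}\sgn(k,l,m)\,\omega_{l}{}^{m}(e_{k})=2\bigl(\omega_{b}{}^{c}(e_{a})-\omega_{a}{}^{c}(e_{b})+\omega_{a}{}^{b}(e_{c})\bigr).
\]
Therefore, for $R$ large (when $\psi_{0}$ is constant on $S_{R}$), the left-hand side of (\ref{4-18}) equals
\[
-\tfrac{1}{2}\int_{S_{R}}\sum_{i=1}^{4}(-1)^{i-1}\bigl(\omega_{b}{}^{c}(e_{a})-\omega_{a}{}^{c}(e_{b})+\omega_{a}{}^{b}(e_{c})\bigr)\,e_{i}\lrcorner dV .
\]

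Finally, the \emph{asymptotic evaluation}. Writing $\omega^{\beta}=\tfrac{1}{\sqrt2}(\theta^{\beta}+\theta^{\bar\beta})$, $\omega^{n+\beta}=\tfrac{i}{\sqrt2}(\theta^{\beta}-\theta^{\bar\beta})$ and using that the Tanaka--Webster connection preserves both the Levi metric and $J$ on $\xi$ (so that in a unitary frame its connection matrix lies in $\mathfrak{u}(n)$), the real connection forms $\omega_{j}{}^{k}$ on $\xi$ are the real and imaginary parts of the $\theta_{\alpha}{}^{\beta}$. Evaluating these on the frame vectors $e_{l}\in\xi$ and discarding, by (\ref{af2}) and (\ref{af7}), the $\mathring\theta$- and $\mathring T$-contributions as of strictly higher decay, $\omega_{j}{}^{k}(e_{l})$ is governed to leading order by the coefficients $C_{\alpha}{}^{\beta}{}_{\bar\gamma}$ (with $B_{\alpha}{}^{\beta}{}_{\gamma}=-\overline{C_{\beta}{}^{\alpha}{}_{\bar\gamma}}$), whose leading $A$-terms are recorded in (\ref{af8}). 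Substituting these, replacing $d\theta$ by $d\mathring\theta+O(\rho^{-2})$ and $e_{i}\lrcorner dV$ by its flat Heisenberg model modulo higher decay order terms exactly as in the proofs of Lemma \ref{CRmass} and (\ref{af21}), the integrand on $S_{R}$ becomes, modulo terms whose boundary integral tends to $0$, homogeneous of degree $0$ under the Heisenberg dilations $(z,t)\mapsto(\lambda z,\lambda^{2}t)$. Hence the limit $R\to\infty$ equals $-\tfrac12$ times the corresponding integral over $S_{1}$, which upon summing over $i$ reduces to a fixed multiple of $A$ times
\[
\oint_{S_{1}}\rho^{-(2n+4)}\Bigl(\sum_{\beta}\bigl(z^{\bar\beta}\bar\omega\,dz^{\beta}+z^{\beta}\omega\,dz^{\bar\beta}\bigr)\Bigr)\wedge\mathring\theta\wedge(d\mathring\theta)^{n-1},
\]
and this integral was shown in (\ref{af17})--(\ref{af20}) (via $z=r\varphi$, $r^{4}+t^{2}=1$ and Stokes' theorem) to equal $2^{2n}n!\,\alpha_{n}\Omega_{n}$. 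Collecting the dimensional constants ($n=2$, $\omega=t+i|z|^{2}$, the $c_{2},\tilde c_{2}$ from (\ref{af8}), and the normalizations of $dV$ and $\mathring\theta$) yields exactly $16(c_{2}-\tilde c_{2})\alpha_{2}\Omega_{2}A$, i.e.\ (\ref{4-18}).

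The main obstacle is the bookkeeping in this last step: one must carefully carry out the passage from the unitary connection forms $\theta_{\alpha}{}^{\beta}$ (where the asymptotics (\ref{af8}) are available) to the real orthonormal connection forms $\omega_{j}{}^{k}$, tracking the factors of $\sqrt2$ and $i$ and the index reorderings forced by $e_{n+\beta}=Je_{\beta}$, and then verify that the particular combination $\omega_{b}{}^{c}(e_{a})-\omega_{a}{}^{c}(e_{b})+\omega_{a}{}^{b}(e_{c})$, once expressed through the $C_{\alpha}{}^{\beta}{}_{\bar\gamma}$, contracts (with the correct sign) to a multiple of $c_{2}-\tilde c_{2}$, rather than to the combination $\tilde c_{2}+2c_{2}$ that appeared for $\widetilde m(J,\theta)$ in (\ref{af21}). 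This is a finite explicit computation with no conceptual difficulty, the residual sphere integral being identical to the one already evaluated in (\ref{af20}).
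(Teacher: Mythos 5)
Your approach is essentially the same as the paper's: reduce the Clifford average $Re\langle\psi_{0},e_{i}e_{k}e_{l}e_{m}\psi_{0}\rangle$ to signs via (\ref{keyfor}), reorganize the sum over $I_{(2)}$ into four antisymmetric combinations of the real connection forms, insert the asymptotics (\ref{af2}), (\ref{af6})--(\ref{af8}), and land on the sphere integral already evaluated in (\ref{af17})--(\ref{af20}). Your algebraic preliminary is in fact cleaner than the paper's: noting that $(e_{1}e_{3}+e_{2}e_{4})=0$ on $\mathbb{S}^{+}$ forces $e_{1}e_{2}e_{3}e_{4}=-\mathrm{Id}$ on $\mathbb{S}^{+}$, so $Re\langle\psi_{0},e_{i}e_{k}e_{l}e_{m}\psi_{0}\rangle=-\varepsilon_{iklm}$, and the sum over the six orderings of $(k,l,m)$ for fixed $i$ collapses by antisymmetry of $\omega_{l}{}^{m}$ to $2\bigl(\omega_{b}{}^{c}(e_{a})-\omega_{a}{}^{c}(e_{b})+\omega_{a}{}^{b}(e_{c})\bigr)$. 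That is a nice systematic packaging of the sign bookkeeping that the paper does case by case when writing (\ref{pmt8}).

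However, there is a genuine gap: you never actually carry out the step on which the lemma rests, namely showing that each combination $\omega_{b}{}^{c}(e_{a})-\omega_{a}{}^{c}(e_{b})+\omega_{a}{}^{b}(e_{c})$ has leading asymptotics proportional to $(c_{2}-\tilde{c}_{2})$ (with the right sign and normalization), rather than to some other linear combination of $c_{2}$ and $\tilde{c}_{2}$ such as the $n\tilde{c}_{n}+c_{n}$ or $\tilde{c}_{n}+nc_{n}$ that appear in Lemmas \ref{CRmass} and (\ref{af21}). You explicitly flag this verification as ``the main obstacle'' and then wave it away as ``a finite explicit computation with no conceptual difficulty,'' but that computation \emph{is} the content of the lemma: it is precisely where the different entries $C_{\alpha}{}^{\beta}{}_{\bar\gamma}$ from (\ref{af8}) (diagonal versus off-diagonal in $\alpha,\beta$ and in $\bar\gamma$, carrying $\tilde{c}_{n}$, $c_{n}$, or $\tilde{c}_{n}+c_{n}$) must be combined through the passage $\theta_{\alpha}{}^{\beta}=\omega_{\alpha}{}^{\beta}+i\omega_{\alpha}{}^{n+\beta}$ and $Z_{\alpha}=\tfrac12(e_{\alpha}-ie_{n+\alpha})$ to produce a cancellation leaving exactly $c_{2}-\tilde{c}_{2}$, as well as the numerical factor $16$ and the claimed positivity needed in Remark \ref{r-4-18}. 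The paper does this explicitly in the chain (\ref{pmt9})--(\ref{pmt12}) and the subsequent substitution into (\ref{pmt8}); your proposal replaces that chain with an assertion that ``collecting the dimensional constants ... yields exactly $16(c_{2}-\tilde{c}_{2})\alpha_{2}\Omega_{2}A$.'' Without that computation the lemma is not proved, only plausibly reduced.
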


\begin{remark}
\label{r-4-18} For blow-ups through the Green's function, we have $c_{2}>%
\tilde{c}_{2}$. Note that (\ref{4-18}) may not hold in general for $n>2.$
\end{remark}

\begin{proof}
First for any positive spinor $\psi $, $(e_{1}e_{3}+e_{2}e_{4})\psi =0$
implies 
\begin{equation*}
\begin{split}
\big<\psi ,e_{1}e_{3}e_{2}e_{4}\psi \big>& =\big<-e_{1}e_{3}\psi
,e_{2}e_{4}\psi \big> \\
& =\big<e_{2}e_{4}\psi ,e_{2}e_{4}\psi \big> \\
& =\big<\psi ,e_{4}e_{2}e_{2}e_{4}\psi \big>=|\psi |^{2}.
\end{split}%
\end{equation*}%
Therefore $Re\big<\psi _{0},e_{i}e_{k}e_{l}e_{m}\psi _{0}\big>=\pm |\psi
_{0}|^{2}$ for any indices $i,k,l,m$ such that each one is different from
others. Let 
\begin{equation*}
I=\sum_{(i,k,l.m)\in I_{(2)}}\int_{S_{R}}\frac{1}{4}\omega
_{l}{}^{m}(e_{k})Re\big<\psi _{0},e_{i}e_{k}e_{l}e_{m}\psi _{0}\big>%
e_{i}\lrcorner dV
\end{equation*}%
(see (\ref{index}) for the definition of the index set $I_{(2)}$). Then we
have 
\begin{equation}
\begin{split}
I& =\frac{1}{2}\int_{S_{R}}\left[ (-\omega _{1}{}^{2}(e_{3})+\omega
_{1}{}^{3}(e_{2})-\omega _{2}{}^{3}(e_{1}))\omega ^{2}\right] |\psi
_{0}|^{2}\wedge \theta \wedge \omega ^{1}\wedge \omega ^{3} \\
& +\frac{1}{2}\int_{S_{R}}\left[ (\omega _{1}{}^{3}(e_{4})-\omega
_{1}{}^{4}(e_{3})+\omega _{3}{}^{4}(e_{1}))\omega ^{4}\right] |\psi
_{0}|^{2}\wedge \theta \wedge \omega ^{1}\wedge \omega ^{3} \\
& +\frac{1}{2}\int_{S_{R}}\left[ (\omega _{1}{}^{2}(e_{4})-\omega
_{1}{}^{4}(e_{2})+\omega _{2}{}^{4}(e_{1}))\omega ^{1}+\right] |\psi
_{0}|^{2}\wedge \theta \wedge \omega ^{2}\wedge \omega ^{4} \\
& +\frac{1}{2}\int_{S_{R}}\left[ (\omega _{2}{}^{4}(e_{3})-\omega
_{2}{}^{3}(e_{4})-\omega _{3}{}^{4}(e_{2}))\omega ^{3}\right] |\psi
_{0}|^{2}\wedge \theta \wedge \omega ^{2}\wedge \omega ^{4},
\end{split}
\label{pmt8}
\end{equation}%
where 
\begin{equation}
\begin{split}
& 2(-\omega _{1}{}^{2}(e_{3})+\omega _{1}{}^{3}(e_{2})-\omega
_{2}{}^{3}(e_{1})) \\
& =-(\theta _{1}{}^{2}+\theta _{\bar{1}}{}^{\bar{2}})(iZ_{1}-iZ_{\bar{1}%
})-2i\theta _{1}{}^{1}(Z_{2}+Z_{\bar{2}})+i(\theta _{2}{}^{1}-\theta _{\bar{2%
}}{}^{\bar{1}})(Z_{1}+Z_{\bar{1}}) \\
& =2i(\theta _{1}{}^{2}(Z_{\bar{1}})+\theta _{2}{}^{1}(Z_{1})-\theta
_{1}{}^{1}(Z_{2})-\theta _{1}{}^{1}(Z_{\bar{2}})) \\
& =2i(\theta _{1}{}^{2}(\mathring{Z}_{\bar{1}})+\theta _{2}{}^{1}(\mathring{Z%
}_{1})-\theta _{1}{}^{1}(\mathring{Z}_{2})-\theta _{1}{}^{1}(\mathring{Z}_{%
\bar{2}}))+\ \text{h.d.o.t.\ (by }(\ref{af2})) \\
& =2i(C_{1}{}^{2}{}_{\bar{1}%
}+B_{2}{}^{1}{}_{1}-B_{1}{}^{1}{}_{2}-C_{1}{}^{1}{}_{\bar{2}})+\ \text{%
h.d.o.t.\ (by}\ (\ref{af6})) \\
& =\frac{2\sqrt{2}}{\rho ^{8}}(c_{2}-\tilde{c}_{2})A(z^{\bar{2}}\bar{\omega}%
+z^{2}\omega )+\ \text{h.d.o.t.\ (by}\ (\ref{af7}),(\ref{af8}))
\end{split}
\label{pmt9}
\end{equation}%
(h.d.o.t. means "higher decay order term(s)"). Similarly, we have 
\begin{equation}
\begin{split}
& 2(\omega _{1}{}^{3}(e_{4})-\omega _{1}{}^{4}(e_{3})+\omega
_{3}{}^{4}(e_{1})) \\
& =\frac{2\sqrt{2}i}{\rho ^{8}}(c_{2}-\tilde{c}_{2})A(z^{\bar{2}}\bar{\omega}%
-z^{2}\omega )+\ \text{h.d.o.t.},
\end{split}
\label{pmt10}
\end{equation}%
\begin{equation}
\begin{split}
& 2(\omega _{1}{}^{2}(e_{4})-\omega _{1}{}^{4}(e_{2})+\omega
_{2}{}^{4}(e_{1})) \\
& =\frac{2\sqrt{2}}{\rho ^{8}}(c_{2}-\tilde{c}_{2})A(z^{\bar{1}}\bar{\omega}%
+z^{1}\omega )+\ \text{h.d.o.t.},
\end{split}
\label{pmt11}
\end{equation}%
\begin{equation}
\begin{split}
& 2(\omega _{2}{}^{4}(e_{3})-\omega _{2}{}^{3}(e_{4})+\omega
_{3}{}^{4}(e_{2})) \\
& =\frac{2\sqrt{2}i}{\rho ^{8}}(c_{2}-\tilde{c}_{2})A(z^{\bar{1}}\bar{\omega}%
-z^{1}\omega )+\ \text{h.d.o.t.}.
\end{split}
\label{pmt12}
\end{equation}%
Substituting (\ref{pmt9}),(\ref{pmt10}),(\ref{pmt11}),(\ref{pmt12}) into (%
\ref{pmt8}), we have, modulo the higher decay order terms (h.d.o.t. in
short), 
\begin{equation}
\begin{split}
I& =\frac{(c_{2}-\tilde{c}_{2})}{\sqrt{2}}A\int_{S_{R}}|\psi _{0}|^{2}\frac{%
(z^{\bar{2}}\bar{\omega}\theta ^{2}+z^{2}\omega \theta ^{\bar{2}})\wedge
\theta \wedge \frac{i}{2}\theta ^{1}\wedge \theta ^{\bar{1}}+(z^{\bar{1}}%
\bar{\omega}\theta ^{1}+z^{1}\omega \theta ^{\bar{1}})\wedge \theta \wedge 
\frac{i}{2}\theta ^{2}\wedge \theta ^{\bar{2}}}{\rho ^{8}} \\
& =\frac{(c_{2}-\tilde{c}_{2})}{\sqrt{2}}A\int_{S_{R}}|\psi _{0}|^{2}\frac{%
\sum_{\beta =1}^{2}(z^{\bar{\beta}}\bar{\omega}\theta ^{\beta }+z^{\beta
}\omega \theta ^{\bar{\beta}})}{\rho ^{8}}\wedge \frac{i}{2}\theta \wedge
(\theta ^{1}\wedge \theta ^{\bar{1}}+\theta ^{2}\wedge \theta ^{\bar{2}}) \\
& =\frac{(c_{2}-\tilde{c}_{2})A}{2}\int_{S_{R}}|\psi _{0}|^{2}\frac{%
\sum_{\beta =1}^{2}(z^{\bar{\beta}}\bar{\omega}dz^{\beta }+z^{\beta }\omega
dz^{\bar{\beta}})}{\rho ^{8}}\wedge \mathring{\theta}\wedge d\mathring{\theta%
}\ (\text{by}\ (\ref{af1}),\text{ }(\ref{af25})).
\end{split}%
\end{equation}%
Therefore, by (\ref{af17}) we have 
\begin{equation}
\lim_{R\rightarrow \infty }I=16(c_{2}-\tilde{c}_{2})\alpha _{2}\Omega _{2}A.
\end{equation}
\end{proof}

\subsection{The case $m=0$}

Next, we prove that if $m(J,\theta )=0$, then $N$ is isomorphic to the
Heisenberg group $H_{2}$ as a pseudohermitian manifold.

\begin{lemma}
\label{le01} The p-mass $m(J,\theta )=0$ implies the Tanaka-Webster scalar
curvature $W=0$.
\end{lemma}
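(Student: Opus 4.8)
The plan is to read the conclusion straight off the Witten-type formula (\ref{WTF}) established in the proof of $m\ge 0$. Combining (\ref{WTF}) with Lemma \ref{CRmass}, the quantity $\int_N\big(|\nabla\psi|^2+W|\psi|^2\big)\,dV$ is a fixed positive multiple of the constant $A$, while the p-mass $m(J,\theta)$ is another fixed positive multiple of the same $A$. Hence $m(J,\theta)=0$ forces $A=0$, and therefore
\begin{equation*}
\int_N|\nabla\psi|^2\,dV+\int_N W|\psi|^2\,dV=0,
\end{equation*}
where $\psi=\psi_0+\psi_{-4+\varepsilon}$ is the spinor field produced by Corollary \ref{Sol}, with $\psi_0$ normalized so that $|\psi_0|=1$ at infinity.

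Next I would invoke the hypothesis $W\ge 0$ of Theorem \ref{PMT}: both integrands are nonnegative, so each integral vanishes separately. From $\int_N|\nabla\psi|^2\,dV=0$ I get $\nabla_{e_a}\psi=0$ for $a=1,\dots,2n$, i.e. $\psi$ is parallel along the contact distribution $\xi$. Hence $e_a|\psi|^2=2\,\mathrm{Re}\langle\nabla_{e_a}\psi,\psi\rangle=0$ for every horizontal $e_a$; since $\xi$ is a contact, hence bracket-generating, distribution on the connected manifold $N$, Chow's theorem joins any point to infinity by a horizontal path, so $|\psi|^2$ is constant and the normalization gives $|\psi|\equiv 1$. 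Substituting $|\psi|^2\equiv 1$ into $\int_N W|\psi|^2\,dV=0$ (which is meaningful since $W\in L^1(N)$ by the definition of asymptotic flatness) yields $\int_N W\,dV=0$, and together with $W\ge 0$ this forces $W\equiv 0$ on $N$, as claimed. (Alternatively, one can note that $\nabla\psi=0$ and $D_\xi^2\psi=0$ already give $W\psi=0$ pointwise via (\ref{WF5D}), and then use that $\psi$ is nowhere zero.)

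I expect this lemma to be essentially immediate once (\ref{WTF}) is available; there is no real obstacle here. The delicate inputs were already disposed of upstream: the validity of the Witten-type formula itself, i.e. the convergence of the boundary integrals over $S_R$ as $R\to\infty$ and the finiteness of the bulk integral, which rest on the decay estimates of Corollary \ref{Sol}, Proposition \ref{RegDO}, and $W\in L^1(N)$. The genuinely harder step is the one that follows, namely upgrading $\nabla\psi=0$ — for sufficiently many choices of the asymptotic constant spinor $\psi_0$ — to the vanishing of the pseudohermitian torsion, which is the content of Lemma \ref{le02}.
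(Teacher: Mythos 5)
Your proof is correct and follows the same route as the paper, which gives only a one-line proof ("This is immediately obtained from the Witten-type formula (\ref{WTF}) and note that $W\geq 0$"). You have supplied exactly the missing details: $m=0$ forces $A=0$ via Lemma \ref{CRmass}, so both terms in $\int_N\bigl(|\nabla\psi|^2+W|\psi|^2\bigr)\,dV$ vanish; then $\nabla\psi\equiv0$ together with the bracket-generating property of $\xi$ gives $|\psi|\equiv1$, so $\int_N W\,dV=0$ and $W\geq0$ yield $W\equiv0$ (or, as you note, one can read $W\psi=0$ directly off (\ref{WF5D}) and use that $\psi$ is nowhere zero).
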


\begin{proof}
This is immediately obtained from the Witten-type formula (\ref{WTF}) and
note that $W\geq 0$.
\end{proof}

\begin{lemma}
\label{le02} The p-mass $m(J,\theta )=0$ implies the torsion (forms) $\tau
^{\beta }=A^{\beta }{}_{\bar{\gamma}}\theta ^{\bar{\gamma}}\equiv 0$ for all 
$\beta .$
\end{lemma}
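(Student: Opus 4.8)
The plan is to run a Witten-type rigidity argument. Its starting point is the Witten-type formula (\ref{WTF}): since $m(J,\theta)$ is a positive multiple of $A$ by (\ref{af12}), the hypothesis $m(J,\theta)=0$ forces $A=0$, so for the spinor field $\psi=\psi_{0}+\psi_{-4+\varepsilon}$ produced in Corollary \ref{Sol} from \emph{any} constant spinor $\psi_{0}\in S^{+}(4)$ at infinity,
\begin{equation*}
\int_{N}\big(|\nabla\psi|^{2}+W|\psi|^{2}\big)\,dV=0 .
\end{equation*}
As $W\geq 0$, this yields simultaneously $W\equiv 0$ (which is Lemma \ref{le01}) and $\nabla_{X}\psi=0$ for every $X\in\xi$. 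Because $\psi\to\psi_{0}$ at infinity and any two points of $N$ are joined by a $\xi$-horizontal path, $|\psi|$ is constant, equal to $|\psi_{0}|$, and $\psi$ is determined by its value at a single point; letting $\psi_{0}$ run over a basis of $S^{+}(4)$ then produces a two-complex-dimensional family of nowhere-vanishing positive spinors, parallel in the contact directions, which frames $\mathbb{S}^{+}$ at every point.

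Next I would feed these horizontally parallel spinors into the commutator identities (\ref{ide8}). Since $\nabla_{X}\psi=0$ for $X\in\xi$ and $\nabla^{p.h.}$ preserves $\xi$, every second covariant derivative $\nabla^{2}_{e_{a},e_{b}}\psi$ with $a,b$ horizontal vanishes, so (\ref{ide8}) collapses to
\begin{equation*}
R_{e_{a}e_{b}}\psi=0\ \text{ whenever }\{e_{a},e_{b}\}\neq\{e_{\alpha},e_{n+\alpha}\},\qquad R_{e_{\alpha}e_{n+\alpha}}\psi=2\nabla_{T}\psi .
\end{equation*}
Writing $R_{XY}=\tfrac{1}{4}\sum_{a,b}\langle R^{p.h.}_{XY}(e_{a}),e_{b}\rangle e_{a}e_{b}$ by (\ref{currep}) and using that the $\psi$'s span $\mathbb{S}^{+}$, the first batch of relations says that in those directions the Clifford element $\sum_{a,b}\langle R^{p.h.}_{XY}(e_{a}),e_{b}\rangle e_{a}e_{b}$ lies in the kernel of the action on $S^{+}(4)$, whose precise form is read off from (\ref{keyformula}) and Lemma (\ref{keyfor}). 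Together with $W\equiv 0$, this constrains $R^{p.h.}$ enough to force the pseudohermitian Ricci curvature to vanish.

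What this argument does \emph{not} directly reach is the torsion: the torsion part of $R^{p.h.}$ is carried precisely by the directions on which $\sum_{\beta}e_{\beta}e_{n+\beta}$ — hence every $R^{p.h.}_{XY}$ — acts trivially on $S^{+}(4)$, so the very relation (\ref{keyfor}) that makes $D_{\xi}^{2}$ subelliptic on $\mathbb{S}^{+}$ is also what prevents $\mathbb{S}^{+}$ from seeing $\tau^{\beta}$; moreover the residual torsion hides inside the undetermined term $\nabla_{T}\psi$ in the $\{e_{\alpha},e_{n+\alpha}\}$ identities. To conclude $\tau^{\beta}\equiv 0$ I would invoke a device in the spirit of Schoen--Yau \cite{SY}: since $m\geq 0$ for every asymptotically flat pseudohermitian $J'$ with $W'\geq 0$ (the first half of Theorem \ref{PMT}) and $m(J,\theta)=0$, the functional $J'\mapsto \mathfrak{A}(J',\theta)+m(J',\theta)$ is critical at $(J,\theta)$ along admissible deformations; substituting the deformation tensor $E$ proportional to the conjugate torsion $\overline{A^{\beta}{}_{\bar\gamma}}$ into the displayed first-variation formula for $\mathfrak{A}(J_{(t)},\theta)+m(J_{(t)},\theta)$ then turns the right-hand side into a positive multiple of $\int_{N}|\tau|^{2}\,\theta\wedge(d\theta)^{n}$, whence $\tau^{\beta}\equiv 0$.

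The main obstacle is making this last step legitimate. A generic deformation of $J$ does not preserve the condition $W'\geq 0$, so one cannot simply quote $m(J',\theta)\geq 0$ and deduce criticality of $\mathfrak{A}+m$; one must either restrict to deformations that keep $W'\geq 0$ (and verify that enough of them remain to force $A=0$), or bound $\mathfrak{A}(J',\theta)+m(J',\theta)$ from below at $(J,\theta)$ by re-running the Witten-type identity for $J'$, or else bypass the variation entirely — exploiting the already-established vanishing of the pseudohermitian Ricci curvature together with the Bianchi identities and the decay of $\tau$ at infinity. Carrying this out, while tracking (via (\ref{keyformula})) exactly which Clifford combinations annihilate $S^{+}(4)$, is the crux of the proof.
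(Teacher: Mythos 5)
Your proposal stops exactly where the real work of this lemma begins, and the gap you flag at the end is genuine: the argument as written does not close. Two remarks on the first half: the parallel-spinor/curvature discussion is a digression for this lemma (the only inputs the paper uses here are $W\equiv 0$ from Lemma \ref{le01} and the nonnegativity half of Theorem \ref{PMT}), and your claimed route ``Ricci flat first, then Bianchi'' would in any case be circular, since the Bianchi identities invoked in Lemma \ref{le03} are the ones valid only after $A_{\alpha\beta}=0$ is known. The decisive point you leave open — how to vary in the torsion direction while staying in the class where the mass is known to be nonnegative — is resolved in the paper not by a criticality statement for $\mathfrak{A}+m$, nor by restricting to deformations preserving $W\geq 0$, but by a two-step construction in the spirit of Schoen--Yau \cite{SY}: deform $J$ by the Reeb flow, $J_{s}=\varphi_{s}^{\ast}J$, so that by (\ref{LTJ}) the velocity is exactly the torsion ($E_{\alpha\gamma}=-iA_{\alpha\gamma}$, precisely the direction you wanted), and then conformally correct the contact form to $u_{s}^{2/n}\theta$, where $u_{s}=1-v_{s}$ solves the scalar-flat equation $b_{n}\Delta_{b(s)}u_{s}+W_{s}u_{s}=0$. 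This is solvable for $|s|$ small because $b_{n}\Delta_{b(s)}+W_{s}$ is coercive (the negative part of $W_{s}$ is small in $L^{n+1}$), and the decay of $v_{s}$ shows $(N,J_{s},u_{s}^{2/n}\theta)$ is again asymptotically flat, spherical, spin, with Tanaka--Webster curvature identically zero — so Theorem \ref{PMT} applies to it exactly, which is what removes your obstacle.

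The second ingredient you are missing is a usable expression for the mass of the corrected structure. Using $m(J,\theta)=0$ (hence $A=0$ in (\ref{af1})) and the expansion of $u_{s}$ at infinity, Proposition \ref{pr4-6} gives $m(J_{s},u_{s}^{2/n}\theta)=C_{n}^{\prime}\int_{N}W_{s}u_{s}\,dV_{\theta}$ with $C_{n}^{\prime}<0$. Differentiating at $s=0$ with $u_{0}=1$, $W_{0}=0$, and the variation formula (\ref{est05}) for $W_{s}$ (whose divergence terms integrate away by the decay of the torsion) yields $\frac{d}{ds}\big|_{s=0}m(J_{s},u_{s}^{2/n}\theta)=-2nC_{n}^{\prime}\int_{N}\sum_{\alpha,\gamma}|A_{\alpha\gamma}|^{2}\,dV_{\theta}>0$ unless the torsion vanishes identically; taking $s<0$ small then produces a scalar-flat, asymptotically flat, spherical spin structure with strictly negative p-mass, contradicting Theorem \ref{PMT} (equivalently the Witten-type formula (\ref{WTF})). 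So the proof is not a first-variation argument for the functional $\mathfrak{A}+m$ at all: the conformal normalization to zero scalar curvature and the mass formula of Proposition \ref{pr4-6} are the two ideas your proposal lacks, and without them the contradiction cannot be produced.
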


\begin{proof}
Motivated by the idea of Schoen and Yau in \cite{SY}, we consider the flow $%
\varphi _{s}$ generated by the Reeb vector field $T$ of $N$, and set $%
J_{s}=\varphi _{s}^{\ast }J$.\ ($\dot{J}=L_{T}J=2iA^{\beta }$ $_{\bar{\alpha}%
}\theta ^{\bar{\alpha}}\otimes Z_{\beta }-2iA^{\bar{\beta}}$ $_{\alpha
}\theta ^{\alpha }\otimes Z_{\bar{\beta}},.$see (\ref{LTJ})). We need the
following proposition.

\begin{proposition}
\label{pr4-6} Suppose $m(J,\theta )=0.$ For $s$, $|s|$ small enough, there
is a unique positive function $u_{s}$ on $N$ such that $(N,J_{s},u_{s}^{2/n}%
\theta )$ is asymptotically flat with zero Tanaka-Webster curvature, and 
\begin{equation}
m(J_{s},u_{s}^{2/n}\theta )=C_{n}^{\prime }\int_{N}W_{s}u_{s}\ \theta \wedge
(d\theta )^{n}  \label{massofdef}
\end{equation}%
\noindent for some negative constant $C_{n}^{\prime }$, where $W_{s}$
denotes the Tanaka-Webster scalar curvature with respect to $(J_{s},\theta
). $
\end{proposition}

\begin{proof}
(\textbf{of Proposition \ref{pr4-6}}) In order for the structure $%
(J_{s},u_{s}^{2/n}\theta )$ to be scalar flat, the function $u_{s}$ must
satisfy 
\begin{equation}
b_{n}\Delta _{b(s)}u_{s}+W_{s}u_{s}=0
\end{equation}%
where $\Delta _{b(s)}$ denotes the sublaplacian with respect to $%
(J_{s},\theta )$. Let $W_{s-}$ denote the negative part of $W_{s},$ meaning
that $W_{s-}(x)$ $:=$ $\min \{W_{s}(x),0\}.$ Write $dV_{\theta }$ :$=$ $%
\theta \wedge (d\theta )^{n}.$ For $|s|$ small enough, $W_{s}$ is not too
negative in the sense that 
\begin{equation}
\left( \int_{N}|W_{s-}|^{n+1}\ dV_{\theta }\right) ^{\frac{1}{n+1}}\leq
\varepsilon _{0}  \label{enercon}
\end{equation}%
for some given small $\varepsilon _{0}$ (which we will specify later). Using
(\ref{enercon}) and the Sobolev type inequality ($p=b_{n}=2+\frac{2}{n})$%
\begin{equation}
\left( \int_{N}|u|^{p}\ dV_{\theta }\right) ^{1/p}\leq C\left(
\int_{N}|\nabla _{b(s)}u|^{2}\ dV_{\theta }\right) ^{1/2}  \label{soboineq}
\end{equation}%
for any function $u$ with compact support on $N$, we have 
\begin{equation}
\begin{split}
& \int_{N}\big<(b_{n}\Delta _{b(s)}+W_{s})u,u\big>\ dV_{\theta } \\
=& b_{n}\int_{N}|\nabla _{b(s)}u|^{2}\ dV_{\theta }+\int_{N}W_{s}u^{2}\
dV_{\theta } \\
\geq & b_{n}\int_{N}|\nabla _{b(s)}u|^{2}\ dV_{\theta
}-\int_{N}|W_{s-}|u^{2}\ dV_{\theta } \\
\geq & b_{n}\int_{N}|\nabla _{b(s)}u|^{2}\ dV_{\theta }-\left(
\int_{N}|W_{s-}|^{n+1}dV_{\theta }\right) ^{1/(n+1)}\left( \int_{N}u^{p}\
dV_{\theta }\right) ^{2/p} \\
\geq & (b_{n}-\varepsilon _{0}C^{2})\int_{N}|\nabla _{b(s)}u|^{2}\
dV_{\theta }.
\end{split}
\label{est01}
\end{equation}%
Choose $\varepsilon _{0}$ $>$ 0 small so that $b_{n}-\varepsilon _{0}C^{2}$ $%
>$ $0$. The estimate (\ref{est01}) implies that $b_{n}\Delta _{b(s)}+W_{s}$
is \textbf{coercive} for $|s|$ small. Therefore there exists a solution $%
v_{s}$ of 
\begin{equation}
b_{n}\Delta _{b(s)}v_{s}+W_{s}v_{s}=W_{s}  \label{est02}
\end{equation}%
on $N$, which decays to zero at infinity. More precisely, using estimates
similar to those of Lemma 3.2 in \cite{SY}, together with (\ref{gr2}), one
finds that 
\begin{equation}
v_{s}=\frac{1}{c\pi \rho ^{2n}}\int_{N}(W_{s}-W_{s}v_{s})\ dV_{\theta
}+O(\rho ^{-2n-1})  \label{est03}
\end{equation}%
near $\infty $ for some $c>0$. Let $u_{s}=1-v_{s}$. Then, by (\ref{est02}) $%
u_{s}$ is the unique positive function $u_{s}$ on $N$ satisfying $%
b_{n}\Delta _{b(s)}u_{s}+W_{s}u_{s}=0$. Therefore, $(N,J_{s},u_{s}^{2/n}%
\theta )$ is asymptotically flat with zero Tanaka-Webster curvature. It
follows from (\ref{est03}) that 
\begin{equation*}
u_{s}=1-\frac{1}{c\pi \rho ^{2n}}\int_{N}W_{s}u_{s}\ dV_{\theta }+O(\rho
^{-2n-1})
\end{equation*}%
and hence 
\begin{equation}
\begin{split}
u_{s}^{p-2}\theta & =\left( 1-\frac{1}{c\pi \rho ^{2n}}\int_{N}W_{s}u_{s}\
dV_{\theta }+O(\rho ^{-2n-1})\right) ^{p-2} \\
& \ \ \ \left( \big(1+c_{n}A\rho ^{-2n}+O(\rho ^{-2n-1})\big)\mathring{\theta%
}+O(\rho ^{-2n-1})_{\beta }dz^{\beta }+O(\rho ^{-2n-1})_{\bar{\beta}}dz^{%
\bar{\beta}}\right) \\
& =\left( 1-\left[ \frac{(p-2)}{c\pi }\int_{N}W_{s}u_{s}\ dV_{\theta }\right]
\rho ^{-2n}+O(\rho ^{-2n-1})\right) \mathring{\theta} \\
& \ \ \ \ \ \ \ +O(\rho ^{-2n-1})_{\beta }dz^{\beta }+O(\rho ^{-2n-1})_{\bar{%
\beta}}dz^{\bar{\beta}}.
\end{split}
\label{est04}
\end{equation}%
For the second equality of (\ref{est04}), we have used $A=0$ by (\ref{af12})
since $m(J,\theta )$ $=$ $0$ by assumption. Therefore, from (\ref{est04})
the p-mass formula (\ref{massofdef}) follows by comparing (\ref{af12}) with (%
\ref{af1}).
\end{proof}

Now we proceed to prove lemma \ref{le02}. Generalizing \cite[(2.20)]{CLee}
to higher dimensions, we have 
\begin{equation}
\frac{d}{ds}\Big|_{s=0}W_{s}=\sum_{\alpha ,\gamma }i(E_{\alpha \gamma ,\bar{%
\gamma}\bar{\alpha}}-E_{\bar{\alpha}\bar{\gamma},\gamma \alpha
})-n\sum_{\alpha ,\gamma }(A_{\bar{\alpha}\bar{\gamma}}E_{\alpha \gamma
}+A_{\alpha \gamma }E_{\bar{\alpha}\bar{\gamma}})  \label{est05}
\end{equation}%
where $\frac{d}{ds}\Big|_{s=0}J_{s}=2E=2E_{\gamma }{}^{\bar{\beta}}\theta
^{\gamma }\otimes Z_{\bar{\beta}}+2E_{\bar{\gamma}}{}^{\beta }\theta ^{\bar{%
\gamma}}\otimes Z_{\beta }$ with $E_{\alpha \gamma }=-iA_{\alpha \gamma }$.
From (\ref{massofdef}) and (\ref{est05}), we have

\begin{equation}
\begin{split}
\frac{d}{ds}\Big|_{s=0}m(J_{s},u_{s}^{2/n}\theta )& =C_{n}^{\prime }\int_{N}(%
\frac{d}{ds}\Big|_{s=0}W_{s}\cdot u_{0}+W_{0}\frac{d}{ds}\Big|_{s=0}u_{s})\
dV_{\theta } \\
& =C_{n}^{\prime }\int_{N}(\frac{d}{ds}\Big|_{s=0}W_{s}\ dV_{\theta },\ \ \
(u_{0}=1,\ \ W_{0}=0), \\
& =-2nC_{n}^{\prime }\int_{N}\sum_{\alpha ,\gamma }|A_{\alpha \gamma }|^{2}\
dV_{\theta } \\
& >0\ \ \ (C_{n}^{\prime }<0)
\end{split}
\label{m_var}
\end{equation}%
if some $A_{\alpha \gamma }$ $\neq $ $0$ at some point, where for the third
equality we have used the divergence theorem and the decay order of the
torsion. Therefore, if the torsion does not vanish identically, we can
construct an asymptotically flat pseudohermitian manifold $%
(N,J_{s},u_{s}^{2/n}\theta )$ for $s<0,$ $|s|$ small with zero
Tanaka-Webster curvature and negative p-mass by (\ref{m_var}). This
contradicts (\ref{WTF}) and Lemma \ref{le02} follows.
\end{proof}

\begin{lemma}
\label{le03} The p-mass $m(J,\theta )=0$ implies the pseudohermitian
curvature $R_{\alpha \bar{\beta}\rho \bar{\sigma}}$ $\equiv $ $0$.
\end{lemma}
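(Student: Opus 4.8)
The plan is to turn the vanishing of the $p$-mass into the existence of a parallel positive spinor, and then to convert ``parallel'' into the vanishing of $R_{\alpha\bar\beta\rho\bar\sigma}$. First, since $m(J,\theta)=0$, formula (\ref{af12}) forces $A=0$, so the Witten-type identity (\ref{WTF}) — applied, exactly as in the proof of $m\ge0$, to the spinor field $\psi=\psi_0+\psi_{-4+\varepsilon}$ of Corollary \ref{Sol} built from a unit constant spinor $\psi_0\in\mathbb S^{+}$ — gives $\int_N(|\nabla\psi|^2+W|\psi|^2)\,dV=0$. Since $W\ge0$ (in fact $W\equiv0$ by Lemma \ref{le01}), the horizontal covariant derivative $\nabla\psi$ vanishes identically; in particular $|\psi|$ is constant, equal to its value $1$ at infinity, so $\psi$ is nowhere zero.

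Second, I read off the curvature. From (\ref{ide8}) and the fact that the Tanaka--Webster connection preserves $\xi$, one gets $\nabla^2_{e_a,e_b}\psi=0$ for all $a,b$, hence $R_{e_ae_b}\psi=0$ whenever $\{e_a,e_b\}$ is not a $J$-adapted pair $\{e_\alpha,e_{n+\alpha}\}$, while $R_{e_\alpha e_{n+\alpha}}\psi=2\nabla_T\psi$; in particular $(R_{e_1e_3}-R_{e_2e_4})\psi=0$. Now $R_{XY}=\tfrac14\sum_{a,b}\langle R^{p.h.}_{XY}(e_a),e_b\rangle e_ae_b$ by (\ref{currep}), and $R^{p.h.}_{XY}\in\mathfrak u(2)$ since $\nabla^{p.h.}$ preserves $J$. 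By (\ref{keyformula}) (in particular (\ref{keyfor})) the central line $\mathbb R J\subset\mathfrak u(2)$ acts as zero on $S^{+}(4)$, while the trace-free part $\mathfrak{su}(2)$ acts on the two-dimensional $S^{+}(4)$ as the standard representation, for which the stabilizer in $SU(2)$ of a nonzero vector is trivial; hence for our $\psi\neq0$ the equation $R_{XY}\psi=0$ is equivalent to the vanishing of the trace-free part of $R^{p.h.}_{XY}$.

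Third, I upgrade this to $R^{p.h.}_{XY}=\lambda(X,Y)J$ for all $X,Y\in\xi$ and then kill $\lambda$. Because the torsion vanishes (Lemma \ref{le02}), the pseudohermitian curvature restricted to $\xi$ is of pure type $(1,1)$, so $R^{p.h.}_{XY}$ depends only on the $(1,1)$-part of $X\wedge Y$, and it satisfies the Kähler-type symmetry $\langle R^{p.h.}_{XY}Z,W\rangle=\langle R^{p.h.}_{ZW}X,Y\rangle$. On the $4$-dimensional $\xi$, the $(1,1)$-parts of the four ``non-$J$-pair'' coordinate bivectors $e_a\wedge e_b$ span the off-diagonal primitive $(1,1)$-forms, while the relation $(R_{e_1e_3}-R_{e_2e_4})\psi=0$ supplies the remaining diagonal primitive direction; feeding these facts and the Kähler symmetry (which then forces the value on the non-primitive direction to be trace-free as well) into the equivalence of the previous paragraph gives $R^{p.h.}_{XY}=\lambda(X,Y)J$. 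Passing to the frame $\{Z_\alpha\}$ this reads $R_{\alpha\bar\beta\rho\bar\sigma}=h_{\alpha\bar\beta}P_{\rho\bar\sigma}$ for some $P_{\rho\bar\sigma}$; applying the symmetry $R_{\alpha\bar\beta\rho\bar\sigma}=R_{\rho\bar\beta\alpha\bar\sigma}$ and tracing with $h^{\alpha\bar\beta}$ gives $nP_{\rho\bar\sigma}=P_{\rho\bar\sigma}$, so $(n-1)P=0$, and since $n=2$ we conclude $P\equiv0$, i.e.\ $R_{\alpha\bar\beta\rho\bar\sigma}\equiv0$. (Alternatively, $R^{p.h.}_{XY}=\lambda(X,Y)J$ together with the symmetry forces the curvature to be of constant type, whereupon $W\equiv0$ from Lemma \ref{le01} makes it zero.)

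The main obstacle is the third step: the unwanted term $2\nabla_T\psi$ in $R_{e_\alpha e_{n+\alpha}}\psi$ does not obviously vanish, so one does \emph{not} get $R_{XY}\psi=0$ for all $X,Y\in\xi$ — only off the $J$-pairs, plus the single relation above. Squeezing the full conclusion out of this requires the $(1,1)$-type and the Kähler symmetry of the pseudohermitian curvature, and it is precisely here that the dimension being $5$ — so that (\ref{keyfor}) holds and $\dim_{\mathbb C}S^{+}(4)=2$ — enters in an essential way.
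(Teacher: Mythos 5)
Your steps 1--2 are sound: from $m(J,\theta)=0$ and (\ref{WTF}) one does get a horizontal-parallel, nowhere-zero $\psi\in\Gamma(\mathbb S^{+})$, and the commutation relations (\ref{ide8}) together with (\ref{keyfor})/(\ref{keyformula}) correctly give that $R_{e_ae_b}\psi=0$ for the non-$J$-pairs and $(R_{e_1e_3}-R_{e_2e_4})\psi=0$, which (since $\mathbb R J\subset\mathfrak u(2)$ acts trivially on $S^{+}(4)$ and $\mathfrak{su}(2)$ acts as the standard representation) is equivalent to the vanishing of the $\mathfrak{su}(2)$-part of $R^{p.h.}_{XY}$ on every bivector orthogonal to the K\"ahler form $\omega$. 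The genuine gap is in step 3: this information, even combined with the $(1,1)$-type and the pair symmetry of the torsion-free Tanaka--Webster curvature, does \emph{not} force $R^{p.h.}_{XY}=\lambda(X,Y)J$. In the block decomposition of the curvature as a symmetric form on $(1,1)$-bivectors $\mathbb R\omega\oplus\Lambda^{-}$, your constraints kill the $\Lambda^{-}\!\to\!\Lambda^{-}$ block (i.e.\ the scalar and the Bochner/Chern--Moser part), but the mixed block --- which is exactly the trace-free pseudohermitian Ricci --- is untouched: the symmetry only identifies $\langle\mathcal R\omega,\zeta\rangle$ with $\langle\mathcal R\zeta,\omega\rangle$, it does not make either vanish. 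Concretely, for $n=2$ every algebraic tensor of pure Ricci type, $R_{\alpha\bar\beta\rho\bar\sigma}=\delta_{\alpha\beta}P_{\rho\bar\sigma}+\delta_{\rho\sigma}P_{\alpha\bar\beta}+\delta_{\alpha\sigma}P_{\rho\bar\beta}+\delta_{\rho\beta}P_{\alpha\bar\sigma}$ with $P$ trace-free Hermitian, satisfies all your constraints (this uses the $2\times2$ identity $PQ+QP=\mathrm{tr}(PQ)\,\mathrm{Id}$ for trace-free $P,Q$), so the parenthetical claim ``the K\"ahler symmetry forces the value on the non-primitive direction'' is false, and the final trace computation starts from an unproved premise.

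This is precisely where the paper has to work: it never gets $R^{p.h.}_{XY}\in\mathbb R J$ algebraically. Instead it uses the sphericity hypothesis (which your proposal never invokes) in the form $S_{\alpha\bar\beta\rho\bar\sigma}=0$ (\ref{biid6}), differentiates it, and combines it with the Bianchi identities (\ref{biid1}), (\ref{biid2}) valid when $A_{\alpha\beta}=0$ and with $W=0$ to conclude that the Ricci tensor is parallel; only then does asymptotic flatness (Ricci decaying at infinity) force $R_{\alpha\bar\sigma}\equiv0$, after which (\ref{biid6}) gives $R_{\alpha\bar\beta\rho\bar\sigma}\equiv0$. Your spinorial steps 1--2 could in principle replace the purely algebraic content of sphericity (they show the totally trace-free part vanishes), but to finish you would still need a substitute for the Bianchi-plus-decay argument that kills the trace-free Ricci; as written, the proposal has no such step.
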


\begin{proof}
Recall (\cite{Le2}) that if $A_{\alpha \beta }=0$ (implied by Lemma \ref%
{le02}), then we have the Bianchi identity 
\begin{equation}
\begin{split}
R_{\alpha \bar{\beta}\rho \bar{\sigma},\gamma }-R_{\alpha \bar{\beta}\gamma 
\bar{\sigma},\rho }& =0; \\
R_{\rho \bar{\sigma},\gamma }-R_{\gamma \bar{\sigma},\rho }& =0,
\end{split}
\label{biid1}
\end{equation}%
and the contracted identity 
\begin{equation}
R_{\gamma \bar{\sigma},\sigma }=0.  \label{biid2}
\end{equation}%
That $N$ is spherical implies 
\begin{equation}
\begin{split}
0& =S_{\alpha \bar{\beta}\rho \bar{\sigma}} \\
& =R_{\alpha \bar{\beta}\rho \bar{\sigma}}-\frac{1}{n+2}(R_{\alpha \bar{\beta%
}}h_{\rho \bar{\sigma}}+R_{\rho \bar{\beta}}h_{\alpha \bar{\sigma}}+\delta
_{\alpha }{}^{\beta }R_{\rho \bar{\sigma}}+\delta _{\rho }{}^{\beta
}R_{\alpha \bar{\sigma}}) \\
& \ \ \ +\frac{W}{(n+1)(n+2)}(\delta _{\alpha }{}^{\beta }h_{\rho \bar{\sigma%
}}+\delta _{\rho }{}^{\beta }h_{\alpha \bar{\sigma}}).
\end{split}
\label{biid6}
\end{equation}%
Taking covariant derivative of (\ref{biid6}), we obtain via Lemma \ref{le01}
($W=0)$ 
\begin{equation}
R_{\alpha \bar{\beta}\rho \bar{\sigma},\gamma }=\frac{1}{n+2}(R_{\alpha \bar{%
\beta},\gamma }h_{\rho \bar{\sigma}}+R_{\rho \bar{\beta},\gamma }h_{\alpha 
\bar{\sigma}}+\delta _{\alpha }{}^{\beta }R_{\rho \bar{\sigma},\gamma
}+\delta _{\rho }{}^{\beta }R_{\alpha \bar{\sigma},\gamma });  \label{biid3}
\end{equation}%
and (by interchanging $\rho $ and $\gamma )$ 
\begin{equation}
R_{\alpha \bar{\beta}\gamma \bar{\sigma},\rho }=\frac{1}{n+2}(R_{\alpha \bar{%
\beta},\rho }h_{\gamma \bar{\sigma}}+R_{\gamma \bar{\beta},\rho }h_{\alpha 
\bar{\sigma}}+\delta _{\alpha }{}^{\beta }R_{\gamma \bar{\sigma},\rho
}+\delta _{\gamma }{}^{\beta }R_{\alpha \bar{\sigma},\rho });  \label{biid4}
\end{equation}%
Subtracting (\ref{biid4}) from (\ref{biid3}) and using the Bianchi
identities (\ref{biid1}), we get 
\begin{equation}
0=\frac{1}{n+2}(R_{\alpha \bar{\beta},\gamma }h_{\rho \bar{\sigma}}+\delta
_{\rho }{}^{\beta }R_{\alpha \bar{\sigma},\gamma }-R_{\alpha \bar{\beta}%
,\rho }h_{\gamma \bar{\sigma}}-\delta _{\gamma }{}^{\beta }R_{\alpha \bar{%
\sigma},\rho }).  \label{biid5}
\end{equation}%
Considering (\ref{biid5}) for $\beta =\gamma $ and taking the sum over $%
\beta $, we have in view of (\ref{biid2}) 
\begin{equation*}
0=\frac{1}{n+2}(-nR_{\alpha \bar{\sigma},\rho }).
\end{equation*}%
That is, $R_{\alpha \bar{\sigma}}$ is paralell and hence vanishing since $N$
is asymptotically flat. This together with (\ref{biid6}) gives the
pseudohermitian curvature $R_{\alpha \bar{\beta}\rho \bar{\sigma}}$ $\equiv $
$0$.
\end{proof}

Take $q_{0}$ $\in $ $N_{\infty }$ $=$ $N\backslash N_{0},$ a simply
connected neighborhood. By Lemma \ref{le02} and Lemma \ref{le03}, we find a
pseudohermitian isomorphism between $N_{\infty }$ and its image $V$ in $%
H_{2}.$ Call $\Psi :V\rightarrow N_{\infty },$ the inverse of this map. Note
that $\Psi $ is an isometry with respect to the adapted (Webster's) metrics $%
L_{\mathring{\theta}}+\mathring{\theta}\otimes \mathring{\theta}$ and $%
L_{\theta }$ $+$ $\theta \otimes \theta $ (recall that $L_{\theta }$ denotes
the Levi metric, cf. (\ref{Levi-0})) respectively$.$ Observe that the
distance between $q_{0}$ and $\infty $ is $\infty $ and so $V$ $\subset $ $%
H_{2}$ must be a neighborhood of $\infty $ by a simple topological argument.
Now extend $\Psi $ to a covering map $\tilde{\Psi}$ $:$ $H_{2}$ $\rightarrow 
$ $N$ via the pseudohermitian development. Note that $V$ is contained in a
fundamental domain. If $\tilde{\Psi}$ is not 1-1, then there are at least
two fundamental domains. But one of them (the one which contains $V)$ has
infinite volume while any other one has finite volume. The contradiction
shows $\tilde{\Psi}$ is 1-1 and a pseudohermitian isomorphism. This
concludes $N\simeq H_{2}$ as pseudohermitian manifolds. We have completed
the proof of Theorem \ref{PMT}.

\begin{proof}
\textbf{(of Corollary \ref{PMT'}) }Consider the blow-up $(N$ $=$ $%
M\backslash \{p\},$ $J,$ $\theta $ $=$ $G_{p}^{2/n}\hat{\theta}).$ By
Proposition \ref{BlmG} (1) $(N,J,\theta )$ is asymptotically flat. It is
obvious that $N=M\backslash \{p\}$ $\subset $ $M$ is spin since $M$ is spin.
From the transformation law (\ref{Lb-1}) it follows that $W_{J,\theta }$ $=$ 
$0$ on $N.$ We can now apply Theorem \ref{PMT} to complete the proof.
\end{proof}

\section{Proof of Theorem \protect\ref{YMP}}

Let $(M^{2n+1},J,\theta )$ be a closed pseudohermitian manifold with the
Tanaka-Webster scalar curvature $W$ $=$ $W_{J,\theta }>0$. For each point $%
p\in M^{2n+1}$, let $G_{p}$ be the Green's function (exists since $%
W_{J,\theta }$ $>$ $0)$ of the CR invariant sublaplacian $L_{b}$ with pole
at $p$, namely $0<G_{p}\in C^{\infty }(M^{2n+1}\setminus \{{p\}})$ such that 
$L_{b}G_{p}=16\delta _{p}$ (\ref{gr1}) where $L_{b}$ $=$ $b_{n}\Delta
_{b}+W, $ $b_{n}=2+\frac{2}{n}$. Recall (\ref{blow-up}) that the blow-up (or
the "generalized Cayley transform") at $p$ is the noncompact pseudohermitian
manifold $(M^{2n+1}\setminus \{{p\}},\ J,$ $G_{p}^{\frac{2}{n}}\theta ).$ By
the transformation law (\ref{Lb-1}), we obtain $W_{J,G_{p}^{\frac{2}{n}%
}\theta }$ $=$ $0$ on $M^{2n+1}\setminus \{{p\}}$. Let $\hat{\theta}$ $=$ $%
G_{p}^{\frac{2}{n}}\theta $ and the volume form $dV_{\hat{\theta}}$ $:=$ $%
\hat{\theta}\wedge (d\hat{\theta})^{n}$. A standard cut-off function
argument implies that 
\begin{equation}
\mathcal{Y}{(M^{2n+1},J)}=\inf_{\phi \in C_{0}^{\infty }(M\setminus \{{p\}})}%
\frac{\int_{M\setminus \{{p\}}}b_{n}|\nabla _{b}^{\hat{\theta}}\phi |^{2}\
dV_{\hat{\theta}}}{(\int_{M\setminus \{{p\}}}|\phi |^{b_{n}}\ dV_{\hat{\theta%
}})^{2/b_{n}}},
\end{equation}%
(see (\ref{YMJ}) for the definition of the CR Yamabe constant $\mathcal{Y}{%
(M^{2n+1},J))}$ where the infimum is taken for $\phi \in C_{0}^{\infty
}(M\setminus \{p\})$ with both numerator and denominator finite. We remark
that by a routine approximation argument, the set of test functions may be
enlarged to consist of positive Lipchitz functions on $M^{2n+1}\setminus
\{p\}$ with both numerator and denominator finite, which is called the set
of \textbf{admissible} test functions. Let $E_{\hat{\theta}}(\phi
):=\int_{M\setminus {p}}b_{n}|\nabla _{b}^{\hat{\theta}}\phi |^{2}\ dV_{\hat{%
\theta}}$. Let $s$ $=$ $b_{n}$ $=$ $2+\frac{2}{n}$ and $||$ $\cdot $ $||_{s}$
denote the $L^{s}$-norm with respect to the volume $dV_{\hat{\theta}}$. We
have the following test function estimate.

\begin{theorem}
\label{esofya} With the notations above, we assume that ($M,J)$ is a closed
spherical CR manifold of dimension $2n+1$. Then for each $n\geq 1$ we
construct a family of test functions $\phi _{\beta }$ such that%
\begin{equation}
E_{\hat{\theta}}(\phi _{\beta })\leq \mathcal{Y}(S^{2n+1},\hat{J})\Vert \phi
_{\beta }\Vert _{2+\frac{2}{n}}^{2}-C_{n}A_{p}\beta ^{-2n}+O(\beta ^{-2n-1})
\label{TFE}
\end{equation}%
\ for $\beta $ large, where $A_{p}$ is the constant in the expansion of the
Green's function $G_{p}$ (see (\ref{exofgr})) and $C_{n}$ is a positive
dimensional constant.
\end{theorem}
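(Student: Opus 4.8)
The plan is to carry out a CR analogue of Schoen's test-function construction in the Riemannian Yamabe problem (\cite{S}, and for the CR case \cite{Li}), in which a bubble concentrating at $p$ interacts with the Green's function $G_{p}$ so that the constant $A_{p}$ is picked up from a boundary integral. By conformal covariance of the CR-Sobolev quotient, writing $\psi _{\beta }:=G_{p}\phi _{\beta }$ one has $E_{\hat{\theta}}(\phi _{\beta })=\int_{M}\big(b_{n}|\nabla _{b}\psi _{\beta }|^{2}+W\psi _{\beta }^{2}\big)dV_{\theta }$ and $\Vert \phi _{\beta }\Vert _{b_{n},\hat{\theta}}=\Vert \psi _{\beta }\Vert _{b_{n},\theta }$ with $b_{n}=2+\frac{2}{n}$, so that (\ref{TFE}) is equivalent to the same estimate for $\psi _{\beta }$ on the closed manifold $(M,J,\theta )$. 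We work in the CR normal coordinates $(z,t)$ of Proposition \ref{GE}; by conformal invariance of $\mathcal{Y}(M,J)$, of the CR invariant sublaplacian and of the blow-up construction, we may assume that $\theta $ equals the flat Heisenberg form on a ball $B_{2\rho _{0}}=\{\rho \leq 2\rho _{0}\}$ about $p$, that $W=0$ there, and that $G_{p}=\frac{a_{n}}{2\pi }\rho ^{-2n}+A_{p}+O(\rho )$ on $B_{2\rho _{0}}$.

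\textbf{Construction of the test functions.} Let $U$ be a Jerison--Lee extremal for the Folland--Stein inequality on $H_{n}$, normalized so that $U(z,t)\sim \rho ^{-2n}$ as $\rho \rightarrow \infty $, and let $U_{\varepsilon }(z,t):=\varepsilon ^{n}U(z/\varepsilon ,t/\varepsilon ^{2})$ be its Heisenberg dilates; the $U_{\varepsilon }$ all realize $\mathcal{Y}(S^{2n+1},\hat{J})=b_{n}\int_{H_{n}}|\mathring{\nabla}_{b}U_{\varepsilon }|^{2}/\big(\int_{H_{n}}U_{\varepsilon }^{b_{n}}\big)^{2/b_{n}}$ and solve the Yamabe equation $b_{n}\mathring{\Delta}_{b}U_{\varepsilon }=\kappa \,U_{\varepsilon }^{b_{n}-1}$ on $H_{n}$ for the associated constant $\kappa >0$. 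Set $\beta :=\varepsilon ^{-1}$, fix a cutoff $\chi $ with $\chi \equiv 1$ on $B_{\rho _{0}}$ and $\chi \equiv 0$ off $B_{2\rho _{0}}$, and put
\begin{equation*}
\psi _{\beta }:=\chi \,U_{\varepsilon }+(1-\chi )\tfrac{2\pi }{a_{n}}\varepsilon ^{n}G_{p},\qquad \phi _{\beta }:=\psi _{\beta }/G_{p}.
\end{equation*}
On the annulus $B_{2\rho _{0}}\setminus B_{\rho _{0}}$ the two ingredients agree to leading order, $U_{\varepsilon }-\tfrac{2\pi }{a_{n}}\varepsilon ^{n}G_{p}=-\tfrac{2\pi }{a_{n}}\varepsilon ^{n}A_{p}+O(\varepsilon ^{n}\rho )+O(\varepsilon ^{n+2}\rho ^{-2n-4})$, so $\psi _{\beta }$ (hence $\phi _{\beta }$) is a positive Lipschitz admissible test function with $\psi _{\beta }=O(\varepsilon ^{n})$ and $\nabla _{b}\psi _{\beta }=O(\varepsilon ^{n})$ there.

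\textbf{The estimate.} For the denominator, $\int_{B_{\rho _{0}}}U_{\varepsilon }^{b_{n}}\,dV_{\theta }=\int_{H_{n}}U^{b_{n}}-\int_{\{\rho \geq \rho _{0}\}}U_{\varepsilon }^{b_{n}}=\Vert U\Vert _{b_{n}}^{b_{n}}+O(\varepsilon ^{2n+2})$, and the contributions of the annulus and of $M\setminus B_{2\rho _{0}}$ are likewise $O(\varepsilon ^{2n+2})$; hence $\Vert \psi _{\beta }\Vert _{b_{n}}^{2}=\Vert U\Vert _{b_{n}}^{2}+O(\beta ^{-2n-2})$. For the numerator, split $M=B_{\rho _{0}}\cup (B_{2\rho _{0}}\setminus B_{\rho _{0}})\cup (M\setminus B_{2\rho _{0}})$. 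On $B_{\rho _{0}}$ (where $W=0$, $\psi _{\beta }=U_{\varepsilon }$), Green's identity together with the Yamabe equation for $U_{\varepsilon }$ gives $b_{n}\int_{B_{\rho _{0}}}|\mathring{\nabla}_{b}U_{\varepsilon }|^{2}=\kappa \int_{B_{\rho _{0}}}U_{\varepsilon }^{b_{n}}+(\text{boundary term over }\{\rho =\rho _{0}\})$, the first summand being $\mathcal{Y}(S^{2n+1},\hat{J})\Vert U\Vert _{b_{n}}^{2}+O(\varepsilon ^{2n+2})$. On $M\setminus B_{2\rho _{0}}$ (where $\psi _{\beta }=\tfrac{2\pi }{a_{n}}\varepsilon ^{n}G_{p}$ and $L_{b}G_{p}=0$ off $p$), Green's identity gives the contribution $\big(\tfrac{2\pi }{a_{n}}\varepsilon ^{n}\big)^{2}b_{n}\,(\text{boundary term over }\{\rho =2\rho _{0}\})$; inserting $G_{p}=\tfrac{a_{n}}{2\pi }\rho ^{-2n}+A_{p}+O(\rho )$ and using that $\oint_{\{\rho =R\}}d\sigma $ scales like $R^{2n+1}$, this expands into a $\rho _{0}$-dependent term of order $\varepsilon ^{2n}$, a $\rho _{0}$-independent term proportional to $A_{p}\varepsilon ^{2n}$, and a remainder $o_{\rho _{0}}(1)\varepsilon ^{2n}+O(\varepsilon ^{2n+1})$. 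On the annulus the integrand is $O(\varepsilon ^{2n})$, contributing $O(\varepsilon ^{2n}\rho _{0}^{-2n})$. The decisive point is that the three $\rho _{0}$-dependent $O(\varepsilon ^{2n})$ contributions cancel: near $\{\rho \sim \rho _{0}\}$ both pieces of $\psi _{\beta }$ look like $U_{\varepsilon }$ to leading order, so their boundary terms enter with opposite signs and the $U_{\varepsilon }$-parts telescope, leaving exactly the $A_{p}$-term. Collecting everything, $\int_{M}(b_{n}|\nabla _{b}\psi _{\beta }|^{2}+W\psi _{\beta }^{2})dV_{\theta }=\mathcal{Y}(S^{2n+1},\hat{J})\Vert U\Vert _{b_{n}}^{2}-C_{n}A_{p}\varepsilon ^{2n}+o_{\rho _{0}}(1)\varepsilon ^{2n}+O(\varepsilon ^{2n+1})$ with $C_{n}>0$ dimensional; dividing by $\Vert \psi _{\beta }\Vert _{b_{n}}^{2}$, absorbing constants, and fixing $\rho _{0}$ small enough (or letting $\rho _{0}=\rho _{0}(\beta )\rightarrow 0$ slowly) yields (\ref{TFE}) since $\varepsilon =\beta ^{-1}$.

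\textbf{Main obstacle.} The heart of the argument is this last bookkeeping: carrying the boundary-integral expansions to order $\varepsilon ^{2n}$ inclusive, verifying that all $\rho _{0}$-dependence cancels among the inner bubble boundary term, the annulus integral and the exterior Green's-function boundary term, and pinning down that the surviving coefficient of $\beta ^{-2n}$ is exactly $-C_{n}A_{p}$ with $C_{n}>0$ (a clean sign computation, as in Schoen's Riemannian case). One must also check that the $O(\rho )$ remainder in the expansion of $G_{p}$, the cutoff-mismatch terms on the annulus, and any deviation of $(M,J,\theta )$ from the flat model near $p$ all contribute only at order $\varepsilon ^{2n+1}=O(\beta ^{-2n-1})$; here the exact Heisenberg-flatness of the structure near $p$ furnished by sphericity (via the CR normal coordinates of Proposition \ref{GE}) is what removes curvature errors from the inner computation, while the precise decay of the Folland--Stein fundamental solution controls the remaining tails.
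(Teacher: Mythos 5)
Your proposal is correct in its key ideas but takes a genuinely different computational route from the paper. The paper works entirely in the blown-up asymptotically flat picture $(M\setminus\{p\},J,\hat{\theta}=G_{p}^{2/n}\theta)$: it takes $\phi_{\beta}$ to be the Jerison--Lee extremal $u_{\beta}(z,t)=\beta^{n}|\omega+i\beta^{2}|^{-n}$ restricted to a level-set region $U_{\beta}(\infty)$ near $p$ and continued by a constant elsewhere, performs a single integration by parts (\ref{521}), and extracts the $A_{p}$-term from the \emph{bulk} cross-term $\int u_{\beta}\langle\nabla_{b}u_{\beta},\nabla_{b}h^{2}\rangle$ of Lemma \ref{keylem3}, where the expansion $h=1+\frac{a_{n}}{2\pi}A_{p}\rho^{-2n}+O(\rho^{-2n-1})$ of the conformal factor carries the Green's-function constant; no explicit glueing to $G_{p}$ and no cancellation of cutoff-dependent boundary integrals need to be verified. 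You instead carry out Schoen's original glueing (cf.\ \cite{S}, \cite{SY}) in the un-blown-up picture: $\psi_{\beta}=G_{p}\phi_{\beta}$ equals the bubble $U_{\varepsilon}$ near $p$, glued at a fixed radius $\rho_{0}$ to $\frac{2\pi}{a_{n}}\varepsilon^{n}G_{p}$, and $A_{p}$ emerges from \emph{boundary}-term mismatches between the inner and outer Green's identities -- which requires verifying the $\rho_{0}$-dependent cancellations that you correctly flag as the heart of the argument. The two constructions are essentially conformal translations of one another (your $\psi_{\beta}$ is, up to normalization and cutoff shape, the pull-back $G_{p}\phi_{\beta}$ of the paper's test function), and both rest on the Jerison--Lee extremal and the expansion (\ref{exofgr}); the paper's organization has the advantage of bypassing the boundary-term bookkeeping by concentrating all the $A_{p}$-information into the single cross-term. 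One notational slip: with $U\sim\rho^{-2n}$ at infinity, the CR-dilate concentrating at $p$ as $\varepsilon\to 0$ is $U_{\varepsilon}(z,t)=\varepsilon^{-n}U(z/\varepsilon,t/\varepsilon^{2})$, not $\varepsilon^{n}U(z/\varepsilon,t/\varepsilon^{2})$; your subsequent estimates (e.g.\ $\psi_{\beta}=O(\varepsilon^{n})$ on the annulus and $\Vert\psi_{\beta}\Vert_{b_{n}}^{b_{n}}=\Vert U\Vert_{b_{n}}^{b_{n}}+O(\varepsilon^{2n+2})$) are consistent only with this corrected scaling, so it is evidently a typo rather than a gap.
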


\subsection{The constant in the expansion of the Green's function}

By Proposition \ref{BlmG} (1) (with notations $\theta $ and $\hat{\theta}$
switched), $G_{p}^{\frac{2}{n}}\cdot \theta $ is asymptotically flat. Hence
writing $G_{p}^{\frac{2}{n}}\cdot \theta =h^{\frac{2}{n}}\cdot \mathring{%
\theta}$ in asymptotic coordinates $(z,t)$ (recall (\ref{3-36}) rescaled to
absorb $(a_{n}/2\pi )^{2/n})$ for a positive smooth function $h=h(z,t)$, we
have 
\begin{equation}
\begin{split}
h(\infty )& =\lim_{(z,t)\rightarrow \infty }h(z,t)=1 \\
h(z,t)& =1+\frac{a_{n}}{2\pi }A_{p}\cdot \rho ^{-2n}+O(\rho ^{-2n-1}),
\end{split}
\label{h}
\end{equation}%
where $\rho =\rho (z,t)=(|z|^{4}+t^{2})^{1/4}$ and $A_{p}$ is the constant
in the expansion of the Green's function $G_{p}$ (see (\ref{exofgr})). We
would like to remark that the constant $A_{p}$ doesn't depend on the choice
of local coordinates near $p\in M^{2n+1}$.

\subsection{Test function estimate: proof of Theorem \protect\ref{esofya}}

First, let us recall that the family of extremals to Sobolev inequality on
the Heisenberg group $(H^{n},\mathring{\theta})$ (suppressing the obvious CR
structure $\mathring{J})$ is given by the following 
\begin{equation*}
u_{\beta }(z,t)=\beta ^{n}\cdot |\omega +i\beta ^{2}|^{-n}
\end{equation*}%
where $\omega =t+i|z|^{2}$ and $\beta >0$. The family $\{u_{\beta }(z,t):\
\beta >0\}$ satisfy the following CR Yamabe equation on $(H^{n},\mathring{%
\theta})$: 
\begin{equation}
D_{\mathring{\theta}}(u_{\beta }/K_{n})=\mathcal{Y}(S^{2n+1},\hat{J})\cdot
(u_{\beta }/K_{n})^{1+\frac{2}{n}},
\end{equation}%
where $\Vert u_{\beta }\Vert _{s}=K_{n}$ (recall $s$ $=$ $b_{n}$ $=$ $2+%
\frac{2}{n}$), $D_{\mathring{\theta}}$ $=$ $b_{n}\Delta _{b}^{\mathring{%
\theta}}$ $(=$ $L_{b}$ on $(H^{n},\mathring{\theta})$ since $W$ $=$ $0)$ and 
$\mathcal{Y}(S^{2n+1},\hat{J})$ is the CR Yamabe constant of the standard CR
sphere $(S^{2n+1},\hat{J}).$ Recall that $\Vert $ $\cdot $ $\Vert _{s}$
denotes the $L^{s}$-norm with repect to the volume form $dV_{\mathring{\theta%
}}$ $=$ $\mathring{\theta}\wedge (d\mathring{\theta})^{n}$.

We would like to transplant the family $\{u_{\beta }(z,t):\ \beta >0\}$ onto
manifold $M^{2n+1}$ near a point $p$. Let $(z,t)$ be a system of asymptotic
coordinates in $H^{n}\backslash ($big compact set) near $p$ such that $%
(z(p),t(p))$ $=$ $\infty $ and $(z,t)$ is near $\infty $. We consider the
following family of level sets ($\subset H^{n}$) parametrized by $\beta $: 
\begin{equation}
\{(z,t):\ u_{\beta }=\beta ^{n}\cdot |\omega +i\beta ^{2}|^{-n}=\beta
^{-n}\cdot (1+\varepsilon )^{-1}\},  \label{leset}
\end{equation}%
where $\varepsilon =R\cdot \beta ^{-2}$ with $R$ being a fixed large
positive number. Denote the interior of the level set containing $\infty $
by $U_{\beta }(\infty )$. We have the following lemma

\begin{lemma}
\label{keylem1} For $\beta >>R$: we have 
\begin{equation}
\{(z,t):|z|^{2}>\frac{R\gamma_{2}}{2}\}\subset U_{\beta }(\infty)\subset
\{(z,t):\ \rho (z,t)^{2}\geq \frac{\gamma_{1}}{2}R\},
\end{equation}
for some constants $\gamma_{1}, \gamma_{2}, 0<\gamma_{1}\leq\gamma_{2}$,
which are independent of $\beta$.
\end{lemma}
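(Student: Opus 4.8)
The plan is to make the level set in (\ref{leset}) completely explicit and then reduce both inclusions to two elementary inequalities. Since $u_{\beta}(z,t)=\beta^{n}|\omega+i\beta^{2}|^{-n}$ with $\omega=t+i|z|^{2}$ and $\omega+i\beta^{2}=t+i(|z|^{2}+\beta^{2})$, the condition $u_{\beta}=\beta^{-n}(1+\varepsilon)^{-1}$ is equivalent to $|\omega+i\beta^{2}|^{2}=\beta^{4}(1+\varepsilon)^{2/n}$, i.e.
\[
t^{2}+(|z|^{2}+\beta^{2})^{2}=\beta^{4}(1+\varepsilon)^{2/n}.
\]
As $u_{\beta}\to 0$ at infinity, the region $U_{\beta}(\infty)$ is exactly $\{t^{2}+(|z|^{2}+\beta^{2})^{2}>\beta^{4}(1+\varepsilon)^{2/n}\}$. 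I would then record the elementary two-sided bound: there exist dimensional constants $0<c_{n}\le C_{n}$ and $\varepsilon_{0}>0$ with $1+c_{n}\varepsilon\le(1+\varepsilon)^{2/n}\le 1+C_{n}\varepsilon$ for all $0<\varepsilon<\varepsilon_{0}$. Since $\varepsilon=R\beta^{-2}<\varepsilon_{0}$ once $\beta\gg R$, substituting yields
\[
\beta^{4}+c_{n}R\beta^{2}\ \le\ \beta^{4}(1+\varepsilon)^{2/n}\ \le\ \beta^{4}+C_{n}R\beta^{2}.
\]

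For the first inclusion, put $\gamma_{2}:=2C_{n}$. If $|z|^{2}>\tfrac{R\gamma_{2}}{2}=C_{n}R$, then dropping the nonnegative term $t^{2}$ gives $(|z|^{2}+\beta^{2})^{2}>(\beta^{2}+C_{n}R)^{2}>\beta^{4}+C_{n}R\beta^{2}\ge\beta^{4}(1+\varepsilon)^{2/n}$, so $(z,t)\in U_{\beta}(\infty)$.

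For the second inclusion, suppose $(z,t)\in U_{\beta}(\infty)$. Expanding the defining inequality and using the lower bound above gives $t^{2}+|z|^{4}+2\beta^{2}|z|^{2}>c_{n}R\beta^{2}$. I split into two cases. If $|z|^{2}\ge\tfrac{c_{n}R}{4}$, then $\rho^{2}=\sqrt{|z|^{4}+t^{2}}\ge|z|^{2}\ge\tfrac{c_{n}R}{4}$. If $|z|^{2}<\tfrac{c_{n}R}{4}$, then $2\beta^{2}|z|^{2}<\tfrac{c_{n}R\beta^{2}}{2}$, so $|z|^{4}+t^{2}>\tfrac{c_{n}R\beta^{2}}{2}$ and hence $\rho^{2}=\sqrt{|z|^{4}+t^{2}}>\beta\sqrt{c_{n}R/2}\ge\tfrac{c_{n}R}{4}$, where the last step uses $\beta\gg R$ (so that $\beta\sqrt{c_{n}R/2}$ dominates $c_{n}R/4$, recalling that $R$ is large, in particular $R\ge 1$). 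In either case $\rho^{2}\ge\tfrac{c_{n}R}{4}=\tfrac{\gamma_{1}}{2}R$ with $\gamma_{1}:=c_{n}/2$. Finally $\gamma_{1}=c_{n}/2\le C_{n}\le 2C_{n}=\gamma_{2}$, and $\gamma_{1},\gamma_{2}$ are dimensional constants independent of $\beta$ (and of $R$), which finishes the proof.

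I do not expect a genuine obstacle here; the only points needing a little care are the uniform choice of $\varepsilon_{0}$ — equivalently, how large $\beta/R$ must be taken for the two-sided bound on $(1+\varepsilon)^{2/n}$ to apply — and the case split in the second inclusion, where one must observe that when $|z|^{2}$ is small the Heisenberg norm $\rho^{2}$ is in fact forced to be of order $\beta\sqrt{R}$, which is $\gg R$, rather than merely of order $R$.
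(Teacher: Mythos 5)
Your proof is correct and follows essentially the same route as the paper: make the level set explicit, bound $(1+\varepsilon)^{2/n}$ above and below by $1+(\text{const})\,\varepsilon$ (the paper does this via the binomial/alternating series), and then extract the two inclusions by elementary algebra. The only difference is cosmetic bookkeeping — the paper solves the resulting quadratics in $|z|^{2}$ and $\rho^{2}$ exactly and analyzes the root $-\beta^{2}+\sqrt{\beta^{4}+R\gamma\beta^{2}}$, whereas you drop terms and use a case split on the size of $|z|^{2}$; both yield the stated constants independent of $\beta$.
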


\begin{proof}
Define 
\begin{equation}
f_{\beta }(z,t)=(\frac{t}{\beta ^{2}})^{2}+2|\frac{z}{\beta }|^{2}+|\frac{z}{%
\beta }|^{4},
\end{equation}%
then 
\begin{equation}
u_{\beta }(z,t)=\beta ^{-n}\cdot (1+\varepsilon )^{-1}\Leftrightarrow
f_{\beta }(z,t)=(1+\varepsilon )^{\frac{2}{n}}-1.
\end{equation}%
That is, for each fixed $\beta $, we may rewrite the level set in %
\eqref{leset} in the following form: 
\begin{equation}
\{(z,t):\ f_{\beta }(z,t)=(1+\varepsilon )^{\frac{2}{n}}-1\}.
\end{equation}%
If $\beta \geq \sqrt{R}$, then, by the binomial series, we have 
\begin{equation}
\begin{split}
(1+\varepsilon )^{\frac{2}{n}}-1& =\sum_{k=0}^{\infty }\Big(%
\begin{array}{c}
\frac{2}{n} \\ 
k%
\end{array}%
\Big)\varepsilon ^{k}-1=\sum_{k=1}^{\infty }\Big(%
\begin{array}{c}
\frac{2}{n} \\ 
k%
\end{array}%
\Big)\varepsilon ^{k} \\
& =\beta ^{-2}\cdot R\cdot \sum_{k=1}^{\infty }\Big(%
\begin{array}{c}
\frac{2}{n} \\ 
k%
\end{array}%
\Big)\varepsilon ^{k-1},
\end{split}%
\end{equation}%
where $\Big(%
\begin{array}{c}
\frac{2}{n} \\ 
k%
\end{array}%
\Big)=\frac{\frac{2}{n}(\frac{2}{n}-1)(\frac{2}{n}-2)\cdots (\frac{2}{n}-k+1)%
}{k!}$. We have $(1+\varepsilon )^{\frac{2}{n}}-1=2\varepsilon +\varepsilon
^{2}$ for $n=1$ and, $(1+\varepsilon )^{\frac{2}{n}}-1=\varepsilon $ for $%
n=2 $. On the other hand, since the series $\sum_{k=1}^{\infty }\Big(%
\begin{array}{c}
\frac{2}{n} \\ 
k%
\end{array}%
\Big)\varepsilon ^{k-1}$ is an alternating series for $n\geq 3$, it is easy
to see that 
\begin{equation}
\beta ^{-2}\cdot R\cdot \left[ \Big(%
\begin{array}{c}
\frac{2}{n} \\ 
1%
\end{array}%
\Big)+\Big(%
\begin{array}{c}
\frac{2}{n} \\ 
2%
\end{array}%
\Big)\varepsilon \right] \leq (1+\varepsilon )^{\frac{2}{n}}-1\leq \beta
^{-2}\cdot R\cdot \Big(%
\begin{array}{c}
\frac{2}{n} \\ 
1%
\end{array}%
\Big),
\end{equation}%
which implies that 
\begin{equation}
\beta ^{-2}\cdot R\cdot \frac{n+2}{n^{2}}\leq (1+\varepsilon )^{\frac{2}{n}%
}-1\leq \beta ^{-2}\cdot R\cdot \frac{2}{n}.
\end{equation}%
We hence conclude that 
\begin{equation}
\beta ^{-2}\cdot R\cdot \gamma _{1}\leq (1+\varepsilon )^{\frac{2}{n}}-1\leq
\beta ^{-2}\cdot R\cdot \gamma _{2},  \label{keyin1}
\end{equation}%
for some constants $\gamma _{1},\gamma _{2},0<\gamma _{1}\leq \gamma _{2}$,
which are independent of $\beta $. Now suppose that $(z,t)\in U_{\beta
}(\infty )$, i.e., $f_{\beta }(z,t)>(1+\varepsilon )^{\frac{2}{n}}-1$. In
terms of \eqref{keyin1}, this implies that $\beta ^{4}f_{\beta }(z,t)>\beta
^{2}\cdot R\cdot \gamma _{1}$. We can rewrite this inequality as $\rho
^{4}+2|z|^{2}\beta ^{2}>\beta ^{2}\cdot R\cdot \gamma _{1}$, which implies 
\begin{equation}
\rho ^{4}+2\beta ^{2}\rho ^{2}>\beta ^{2}\cdot R\cdot \gamma _{1}.
\label{keyin2}
\end{equation}%
Inequality \eqref{keyin2} is equivalent to 
\begin{equation}
\rho ^{2}>-\beta ^{2}+\sqrt{\beta ^{4}+R\gamma _{1}\beta ^{2}}.
\label{keyin3}
\end{equation}%
On the other hand, suppose that $a$ is an arbitrary positive constant such
that $\gamma _{1}-2a>0$. We have $(\gamma _{1}-2a)\beta ^{2}>a^{2}R$, for $%
\beta >>1$. This is equivalent to 
\begin{equation}
\beta ^{2}R\gamma _{1}>2aR\beta ^{2}+a^{2}R^{2}.  \label{keyin4}
\end{equation}%
If we add $\beta ^{4}$ on both side of \eqref{keyin4}, we get 
\begin{equation}
\beta ^{4}+\beta ^{2}R\gamma _{1}>(\beta ^{2}+aR)^{2},  \label{keyin5}
\end{equation}%
which, together with \eqref{keyin3}, implies that 
\begin{equation}
\rho ^{2}>aR  \label{keyin6}
\end{equation}%
for any $a$ with $\gamma _{1}-2a>0$. We hence prove that $U_{\beta }(\infty
)\subset \{(z,t):\ \rho (z,t)\geq (\frac{\gamma _{1}}{2}R)^{1/2}\}$.
Finally, in terms of \eqref{keyin1}, we have 
\begin{equation}
\{(z,t):\rho ^{4}+2|z|^{2}\beta ^{2}>R\gamma _{2}\beta ^{2}\}\subset
U_{\beta }(\infty ),
\end{equation}%
which implies 
\begin{equation}
\{(z,t):|z|^{4}+2|z|^{2}\beta ^{2}>R\gamma _{2}\beta ^{2}\}\subset U_{\beta
}(\infty ),
\end{equation}%
or, equivalently, 
\begin{equation}
\{(z,t):|z|^{2}>-\beta ^{2}+\sqrt{\beta ^{4}+R\gamma _{2}\beta ^{2}}%
\}\subset U_{\beta }(\infty ).  \label{keyin7}
\end{equation}%
Define $g(\beta )$ to be the function $g(\beta )=-\beta ^{2}+\sqrt{\beta
^{4}+R\gamma _{2}\beta ^{2}}$. A straightforward computation shows that $%
g^{\prime }(\beta )>0$ for all $\beta >0$ and $g(\beta )\rightarrow \frac{%
R\gamma _{2}}{2}$ as $\beta \rightarrow \infty $. We thus have that $g(\beta
)\leq \frac{R\gamma _{2}}{2}$ for all $\beta >0$. This, together with %
\eqref{keyin7}, shows that 
\begin{equation}
\{(z,t):|z|^{2}>\frac{R\gamma _{2}}{2}\}\subset U_{\beta }(\infty ).
\label{keyin8}
\end{equation}%
We therefore complete the proof of Lemma \ref{keylem1}.
\end{proof}

Since $U_{\beta }(\infty )$ is contained in a fixed neighborhood of $\infty $
for all large $\beta $, we may fix coordinates $(z,t)$ as described earlier
and transplant the family of extremals $\{u_{\beta }(z,t):\ \beta >0\}$ onto
manifold $M^{2n+1}$ near the point $p$ as follows: 
\begin{equation}
\phi _{\beta }=\left\{ 
\begin{array}{ll}
u_{\beta }(z,t) & \ \text{in}\ U_{\beta }(\infty ) \\ 
\beta ^{-n}\cdot (1+\varepsilon )^{-1} & \ \text{elsewhere in}\
M^{2n+1}\setminus p.%
\end{array}%
\right.
\end{equation}%
It is easy to check directly that $\{\phi _{\beta }:\ \beta >0\}$ is a
family of admissible test functions.

Let us use $U_{\beta }(L)$ to denote $U_{\beta }(\infty )\cap B_{L}(0)$.
Note that in $U_{\beta }(\infty )$, we have $\hat{\theta}=h(z,t)^{\frac{2}{n}%
}\cdot \mathring{\theta}(z,t)$ (with $h$ having the expansion in (\ref{h})).
It then follows that 
\begin{equation}
\begin{split}
E_{\hat{\theta}}(\phi _{\beta })& =\int_{M\setminus \{{p\}}}b_{n}|\nabla
_{b}^{\hat{\theta}}\phi _{\beta }|^{2}\ dV_{\hat{\theta}} \\
& =\int_{U_{\beta }(\infty )}b_{n}|\nabla _{b}^{\mathring{\theta}}\phi
_{\beta }|^{2}\cdot h^{2}\ dV_{\mathring{\theta}} \\
& =\lim_{L\rightarrow \infty }\int_{U_{\beta }(L)}b_{n}|\nabla _{b}^{%
\mathring{\theta}}\phi _{\beta }|^{2}\cdot h^{2}\ dV_{\mathring{\theta}}.
\end{split}%
\end{equation}%
Using the divergence theorem, we compute

\begin{equation}
\begin{split}
& \int_{U_{\beta }(L)}b_{n}|\nabla _{b}^{\mathring{\theta}}\phi _{\beta
}|^{2}\cdot h^{2}\ dV_{\mathring{\theta}} \\
=& \int_{U_{\beta }(L)}D_{\mathring{\theta}}(u_{\beta })\cdot u_{\beta
}\cdot h^{2}\ dV_{\mathring{\theta}}-\int_{U_{\beta }(L)}b_{n}\cdot u_{\beta
}\big<\nabla _{b}^{\mathring{\theta}}u_{\beta },\nabla _{b}^{\mathring{\theta%
}}(h^{2})\big>\ dV_{\mathring{\theta}} \\
& +n\int_{\partial U_{\beta }(L)}b_{n}\cdot u_{\beta }(e_{2n}u_{\beta
})\cdot h^{2}\ \mathring{\theta}\wedge (d\mathring{\theta})^{n-1}\wedge
e^{n}.
\end{split}
\label{521}
\end{equation}%
(recall that $D_{\mathring{\theta}}$ $=$ $b_{n}\Delta _{b}^{\mathring{\theta}%
}$) In what follows, we would like to demonstrate that the first term in the
right hand side of \eqref{521} enables us to compare with $\mathcal{Y}%
(S^{2n+1},\hat{J})$; the second term gives us a crucial term and the third
term turns out to be higher order term. For the first term, we have the
following estimate.

\begin{lemma}
\label{keylem2} For all large $L$, it holds that 
\begin{equation*}
\int_{U_{\beta }(L)}D_{\mathring{\theta}}(u_{\beta })\cdot u_{\beta }\cdot
h^{2}\ dV_{\mathring{\theta}}\leq \mathcal{Y}(S^{2n+1},\hat{J})\Vert \phi
_{\beta }\Vert _{s}^{2}
\end{equation*}%
where $\Vert \phi _{\beta }\Vert _{s}$ (recall $s=2+\frac{2}{n})$ denotes
the $L^{s}$-norm with respect to the volume form $dV_{\hat{\theta}}$ $=$ $%
\hat{\theta}\wedge (d\hat{\theta})^{n}$ (recall that $\hat{\theta}$ is the
contact form on $M^{2n+1}\setminus p)$.
\end{lemma}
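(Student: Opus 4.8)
The plan is to use the CR Yamabe equation satisfied by the Heisenberg extremals $u_\beta$ to trade the operator $D_{\mathring\theta}$ for a pure power of $u_\beta$, and then to control the resulting weighted integral by Hölder's inequality against the two global quantities $\Vert\phi_\beta\Vert_s$ and $\Vert u_\beta\Vert_s=K_n$. First I would record, using that $D_{\mathring\theta}=b_n\Delta_b^{\mathring\theta}$ is linear and that $D_{\mathring\theta}(u_\beta/K_n)=\mathcal{Y}(S^{2n+1},\hat J)(u_\beta/K_n)^{1+\frac2n}$, the pointwise identity
\begin{equation*}
D_{\mathring\theta}(u_\beta)\cdot u_\beta=\mathcal{Y}(S^{2n+1},\hat J)\,K_n^{-2/n}\,u_\beta^{\,s},\qquad s=2+\tfrac2n ,
\end{equation*}
where $\mathcal{Y}(S^{2n+1},\hat J)>0$ (integrate the Yamabe equation: the left side has positive Dirichlet integral, the right side equals $\mathcal{Y}(S^{2n+1},\hat J)$ since $\Vert u_\beta/K_n\Vert_s=1$). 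Thus the left-hand side of the lemma equals $\mathcal{Y}(S^{2n+1},\hat J)\,K_n^{-2/n}\int_{U_\beta(L)}u_\beta^{\,s}h^2\,dV_{\mathring\theta}$, and the whole problem is reduced to estimating $\int_{U_\beta(L)}u_\beta^{\,s}h^2\,dV_{\mathring\theta}$.

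For that step I would split $u_\beta^{\,s}h^2=(u_\beta^{\,s}h^{\,s})^{2/s}\cdot u_\beta^{\,s-2}$ and apply Hölder's inequality with the conjugate exponents $s/2$ and $s/(s-2)$ (here $\frac2s+\frac{s-2}{s}=1$), obtaining
\begin{equation*}
\int_{U_\beta(L)}u_\beta^{\,s}h^2\,dV_{\mathring\theta}\ \le\ \Big(\int_{U_\beta(L)}u_\beta^{\,s}h^{\,s}\,dV_{\mathring\theta}\Big)^{2/s}\Big(\int_{U_\beta(L)}u_\beta^{\,s}\,dV_{\mathring\theta}\Big)^{(s-2)/s}.
\end{equation*}
Now I would identify the two factors with global quantities: since $dV_{\hat\theta}=h^{\,s}\,dV_{\mathring\theta}$ on $U_\beta(\infty)$ and $\phi_\beta=u_\beta$ there, the first factor is $\le\big(\int_{M\setminus\{p\}}\phi_\beta^{\,s}\,dV_{\hat\theta}\big)^{2/s}=\Vert\phi_\beta\Vert_s^{2}$; and since $\Vert u_\beta\Vert_s=K_n$ with respect to $dV_{\mathring\theta}$, the second factor is $\le K_n^{\,s-2}=K_n^{2/n}$. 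Combining, $\int_{U_\beta(L)}u_\beta^{\,s}h^2\,dV_{\mathring\theta}\le\Vert\phi_\beta\Vert_s^{2}K_n^{2/n}$, and substituting back gives exactly $\mathcal{Y}(S^{2n+1},\hat J)\Vert\phi_\beta\Vert_s^{2}$.

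I do not expect a genuine obstacle in this lemma; the only points needing care are the bookkeeping of the conformal factor (the volume form scales by $h^{\,s}=h^{2+2/n}$ while the energy integrand carries only the weight $h^2$, which is precisely what forces the Hölder split with exponents $s/2$ and $s/(s-2)$), and the positivity of $\mathcal{Y}(S^{2n+1},\hat J)$, which is what allows multiplying the inequality by $\mathcal{Y}(S^{2n+1},\hat J)K_n^{-2/n}$ without reversing it. Finiteness of every integral is automatic because $U_\beta(L)$ is relatively compact, so no limiting argument is required at this point; the passage $L\to\infty$ is handled separately when this estimate is fed back into \eqref{521}.
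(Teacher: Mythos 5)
Your proposal is correct and follows essentially the same route as the paper: use the Yamabe equation for $u_{\beta}$ to rewrite $D_{\mathring{\theta}}(u_{\beta})\cdot u_{\beta}=\mathcal{Y}(S^{2n+1},\hat{J})K_{n}^{2-s}u_{\beta}^{s}$, split $u_{\beta}^{s}h^{2}$ and apply H\"older with exponents $s/2$ and $s/(s-2)$, then absorb one factor into $K_{n}^{s-2}$ and recognize the other as $\Vert\phi_{\beta}\Vert_{s}^{2}$ via $dV_{\hat{\theta}}=h^{s}dV_{\mathring{\theta}}$. The bookkeeping of the conformal factor and the positivity of $\mathcal{Y}(S^{2n+1},\hat{J})$ are handled exactly as in the paper, so there is nothing to add.
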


\begin{proof}
Recall that $K_{n}=\left[ \int_{H^{n}}u_{\beta }^{s}dV_{\Theta }\right]
^{1/s}$ and 
\begin{equation*}
D_{\mathring{\theta}}(u_{\beta }/K_{n})=\mathcal{Y}(S^{2n+1})\cdot (u_{\beta
}/K_{n})^{s-1},
\end{equation*}%
where we recall $s=2+2/n$. Using this identity, and by H\"{o}lder
inequality, we have 
\begin{equation*}
\begin{split}
& \int_{U_{\beta }(L)}D_{\mathring{\theta}}(u_{\beta })\cdot u_{\beta }\cdot
h^{2}dV_{\mathring{\theta}}=\mathcal{Y}(S^{2n+1})K_{n}^{2-s}\int_{U_{\beta
}(L)}u_{\beta }^{s}\cdot h^{2}dV_{\mathring{\theta}} \\
& \leq \mathcal{Y}(S^{2n+1})K_{n}^{2-s}\left[ \int_{U_{\beta }(L)}u_{\beta
}^{s}dV_{\mathring{\theta}}\right] ^{(s-2)/s}\left[ \int_{U_{\beta
}(L)}(u_{\beta }h)^{s}dV_{\mathring{\theta}}\right] ^{2/s} \\
& \leq \mathcal{Y}(S^{2n+1})\left[ \int_{U_{\beta }(L)}u_{\beta }^{s}\cdot
h^{s}dV_{\mathring{\theta}}\right] ^{2/s}\leq \mathcal{Y}(S^{2n+1})\Vert
u_{\beta }\Vert _{s}^{2},
\end{split}%
\end{equation*}%
where $\Vert u_{\beta }\Vert _{s}$ is taken with respect to the contact form 
$\hat{\theta}$ and notice that $dV_{\hat{\theta}}=h^{s}dV_{\mathring{\theta}%
} $. The lemma hence follows.
\end{proof}

For the second term, we have the following estimate.

\begin{lemma}
\label{keylem3} For all large $L$, it holds that%
\begin{equation*}
\int_{U_{\beta }(L)}u_{\beta }\big<\nabla _{b}^{\mathring{\theta}}u_{\beta
},\nabla _{b}^{\mathring{\theta}}(h^{2})\big>\ dV_{\mathring{\theta}}\geq
C_{n}\cdot A_{p}\cdot \beta ^{-2n}+O(\beta ^{-2n-1})
\end{equation*}%
where $C_{n}$ is a positive dimensional constant and $A_{p}$ is the constant
in the expansion of the Green's function $G_{p}$ (\ref{exofgr}).
\end{lemma}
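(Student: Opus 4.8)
The plan is to substitute the expansion (\ref{h}) of $h$ and isolate the unique piece of $h^{2}$ that produces a contribution of order $\rho^{-2n}$. First I would write, on $U_{\beta}(\infty)$,
\[
h^{2}=1+\frac{a_{n}}{\pi}A_{p}\rho^{-2n}+\mathcal{E},\qquad \mathcal{E}=O(\rho^{-2n-1}),\quad \nabla_{b}^{\mathring{\theta}}\mathcal{E}=O(\rho^{-2n-2}),
\]
the last bound coming from the regularity underlying Proposition \ref{GE} (so that (\ref{h}) may be differentiated). Since $\nabla_{b}^{\mathring{\theta}}(1)=0$ and $\nabla_{b}^{\mathring{\theta}}(u_{\beta}^{2})=2u_{\beta}\nabla_{b}^{\mathring{\theta}}u_{\beta}$, this decomposes the integral as $\frac{a_{n}}{2\pi}A_{p}\,I_{1}(L)+\frac12 I_{2}(L)$, where $I_{1}(L)=\int_{U_{\beta}(L)}\langle \nabla_{b}^{\mathring{\theta}}(u_{\beta}^{2}),\nabla_{b}^{\mathring{\theta}}\rho^{-2n}\rangle\, dV_{\mathring{\theta}}$ and $I_{2}(L)=\int_{U_{\beta}(L)}\langle \nabla_{b}^{\mathring{\theta}}(u_{\beta}^{2}),\nabla_{b}^{\mathring{\theta}}\mathcal{E}\rangle\, dV_{\mathring{\theta}}$; as in (\ref{521}) I would let $L\to\infty$ at the end.

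For the main term $I_{1}$ I would apply the integration-by-parts identity already used for (\ref{521}), $\int_{\Omega}\langle \nabla_{b}^{\mathring{\theta}}f,\nabla_{b}^{\mathring{\theta}}g\rangle\, dV_{\mathring{\theta}}=\int_{\Omega}f\,\mathring{\Delta}_{b}g\, dV_{\mathring{\theta}}+\oint_{\partial\Omega}f\,\langle \nabla_{b}^{\mathring{\theta}}g,\nu\rangle\, dA$. By (\ref{gr2}), $\mathring{\Delta}_{b}\rho^{-2n}=0$ on $H_{n}\setminus\{0\}\supset U_{\beta}(L)$, so the interior term drops and $I_{1}(L)=\oint_{\partial U_{\beta}(L)}u_{\beta}^{2}\langle \nabla_{b}^{\mathring{\theta}}\rho^{-2n},\nu\rangle\, dA$. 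On $\{\rho=L\}$ one has $u_{\beta}^{2}=O(\beta^{2n}L^{-4n})$, $|\langle \nabla_{b}^{\mathring{\theta}}\rho^{-2n},\nu\rangle|=O(L^{-2n-1})$, area $O(L^{2n+1})$, so that part vanishes as $L\to\infty$; what survives is the level set $\Sigma_{\beta}:=\partial U_{\beta}(\infty)=\{u_{\beta}=\beta^{-n}(1+\varepsilon)^{-1}\}$, on which $u_{\beta}^{2}\equiv\beta^{-2n}(1+\varepsilon)^{-2}$ is constant. Applying the divergence theorem on the bounded shell between $\Sigma_{\beta}$ and a small Heisenberg sphere about the origin (where $\rho^{-2n}$ is $\mathring{\Delta}_{b}$-harmonic) and invoking the normalization (\ref{gr2}) gives $\oint_{\Sigma_{\beta}}\langle \nabla_{b}^{\mathring{\theta}}\rho^{-2n},\nu\rangle\, dA=\frac{32\pi}{a_{n}b_{n}}$, with $\nu$ the outward normal of $U_{\beta}(\infty)$. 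Since $\varepsilon=R\beta^{-2}$, this yields $\frac{a_{n}}{2\pi}A_{p}\,I_{1}(L)\to\frac{16}{b_{n}}A_{p}\beta^{-2n}+O(\beta^{-2n-2})$ as $L\to\infty$.

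For the remainder $I_{2}$ I would not integrate by parts but estimate directly: $|I_{2}(L)|\leq 2\int_{U_{\beta}(\infty)}u_{\beta}|\nabla_{b}^{\mathring{\theta}}u_{\beta}|\,|\nabla_{b}^{\mathring{\theta}}\mathcal{E}|\, dV_{\mathring{\theta}}$. From $u_{\beta}=\beta^{n}|\omega+i\beta^{2}|^{-n}$ (recall $\omega=t+i|z|^{2}$) one reads off $u_{\beta}|\nabla_{b}^{\mathring{\theta}}u_{\beta}|=O(\beta^{-2n-2}\rho)$ for $\rho\leq\beta$ and $=O(\beta^{2n}\rho^{-4n-1})$ for $\rho\geq\beta$; together with $|\nabla_{b}^{\mathring{\theta}}\mathcal{E}|=O(\rho^{-2n-2})$, $dV_{\mathring{\theta}}\sim\rho^{2n+1}d\rho$, and $U_{\beta}(\infty)\subset\{\rho^{2}\geq\frac{\gamma_{1}}{2}R\}$ (Lemma \ref{keylem1}), splitting at $\rho=\beta$ makes each piece $O(\beta^{-2n-1})$, uniformly in $L$. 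Combining, $\int_{U_{\beta}(\infty)}u_{\beta}\langle \nabla_{b}^{\mathring{\theta}}u_{\beta},\nabla_{b}^{\mathring{\theta}}(h^{2})\rangle\, dV_{\mathring{\theta}}=\frac{16}{b_{n}}A_{p}\beta^{-2n}+O(\beta^{-2n-1})$, which is the claim with $C_{n}=\frac{16}{b_{n}}=\frac{8n}{n+1}>0$ (for finite large $L$ the difference from this limiting value is the $\{\rho=L\}$ flux, $O(\beta^{2n}L^{-4n})$, and is absorbed). The hard part will be the middle step: recognizing that the leading contribution reduces to a pure boundary integral via the $\mathring{\Delta}_{b}$-harmonicity of $\rho^{-2n}$, and then pinning down the value and sign of $\oint_{\Sigma_{\beta}}\langle \nabla_{b}^{\mathring{\theta}}\rho^{-2n},\nu\rangle\, dA$ from (\ref{gr2}) with the correct orientation of $\nu$; a secondary point is justifying the differentiated bound $\nabla_{b}^{\mathring{\theta}}\mathcal{E}=O(\rho^{-2n-2})$.
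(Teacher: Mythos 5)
Your argument is correct, and it reaches the same conclusion as the paper but by a genuinely different route for the leading term. The paper computes the inner product explicitly, using \eqref{541} to get the closed formula for $\sum_{\alpha}u_{\beta}(\mathring{Z}_{\alpha}u_{\beta})(\mathring{Z}_{\bar{\alpha}}\rho^{-2n})$, then performs the non-isotropic scaling $\hat z=z/\beta$, $\hat t=t/\beta^{2}$ to exhibit a manifestly nonnegative integrand of size $O(\hat\rho^{-6n-2})$, and bounds the resulting $\beta$-independent integral from below on a fixed neighborhood of infinity (contained in $U_{\beta}(\infty)$ by Lemma \ref{keylem1}); this yields the inequality with an unspecified $\tilde C_{n}>0$. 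You instead integrate by parts a second time: since $\mathring\Delta_{b}\rho^{-2n}=0$ away from the origin and $\partial U_{\beta}(\infty)$ is a level set of $u_{\beta}$ (so $u_{\beta}^{2}$ is constant there), the main term collapses to a single flux integral whose value is pinned down by the normalization \eqref{gr2}, giving the explicit leading coefficient $\tfrac{16}{b_{n}}A_{p}\beta^{-2n}(1+\varepsilon)^{-2}$; your sign bookkeeping (outward normal of $U_{\beta}(\infty)$ pointing toward the pole, negative-sublaplacian convention) is consistent, and the outer-sphere flux $O(\beta^{2n}L^{-4n})$ indeed disappears as $L\to\infty$. This buys an exact constant (and in fact an equality up to $O(\beta^{-2n-1})$, so the inequality holds irrespective of the sign of $A_{p}$), at the price of invoking a sub-Riemannian Green/Stokes identity with the correct boundary pairing — which is the same device the paper already uses in \eqref{521}, so nothing new is needed there. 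Your error-term treatment (splitting at $\rho=\beta$, using $|\omega+i\beta^{2}|\geq\max(\rho^{2},\beta^{2})$ and Lemma \ref{keylem1}) matches the paper's "similar to check" step, and the differentiated remainder bound $\nabla_{b}^{\mathring\theta}\mathcal{E}=O(\rho^{-2n-2})$ that you rightly flag is exactly what the paper also uses implicitly in \eqref{541}, justified by the smoothness of $G_{p}-\tfrac{a_{n}}{2\pi}\rho^{-2n}$ from Proposition \ref{GE} transferred through the inversion. So there is no gap; only the schematic boundary measure $\langle\nabla_{b}^{\mathring\theta}g,\nu\rangle\,dA$ should, in a final write-up, be realized as the interior-product form (as in \eqref{521}) so that Stokes' theorem applies verbatim and the flux constant from \eqref{gr2} is used with one consistent convention.
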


\begin{proof}
Recall that $\mathring{Z}_{\alpha }$ $=$ $\frac{1}{\sqrt{2}}(\frac{\partial 
}{\partial z^{\alpha }}+iz^{\bar{\alpha}}\frac{\partial }{\partial t}),\
\omega =t+i|z|^{2}$ and $h=h(z,t)=1+A_{p}\cdot \rho ^{-2n}+O(\rho ^{-2n-1})$
(\ref{h}). After a straightforward computation, we have 
\begin{equation}
\begin{split}
\mathring{Z}_{\alpha }u_{\beta }& =-\frac{n}{\sqrt{2}}\beta ^{n}\frac{iz^{%
\bar{\alpha}}(\bar{\omega}-i\beta ^{2})}{|\omega +i\beta ^{2}|^{n+2}},\ 
\mathring{Z}_{\bar{\alpha}}\rho ^{-2n}=in\rho ^{-2(n+2)}z^{\alpha }\omega ,
\\
\mathring{Z}_{\bar{\alpha}}h^{2}& =\sqrt{2}A_{p}(\mathring{Z}_{\bar{\alpha}%
}\rho ^{-2n})+O(\rho ^{-2n-2}).
\end{split}
\label{541}
\end{equation}%
Thus 
\begin{equation}
\begin{split}
& u_{\beta }\big<\nabla _{b}^{\mathring{\theta}}u_{\beta },\nabla _{b}^{%
\mathring{\theta}}(h^{2})\big> \\
& =u_{\beta }\sum_{\alpha =1}^{n}(\mathring{Z}_{\alpha }u_{\beta })(%
\mathring{Z}_{\bar{\alpha}}h^{2})+\text{conjugate} \\
& =\left[ 2A_{p}\sum_{\alpha =1}^{n}u_{\beta }(\mathring{Z}_{\alpha
}u_{\beta })(\mathring{Z}_{\bar{\alpha}}\rho ^{-2n})\right] +\sum_{\alpha
=1}^{n}u_{\beta }(\mathring{Z}_{\alpha }u_{\beta })O(\rho ^{-2n-2})+\text{%
conjugate}.
\end{split}
\label{542}
\end{equation}%
Using \eqref{541}, it is easy to check that 
\begin{equation*}
\sum_{\alpha =1}^{n}u_{\beta }(\mathring{Z}_{\alpha }u_{\beta })(\mathring{Z}%
_{\bar{\alpha}}\rho ^{-2n})=\frac{n^{2}}{2}\beta ^{2n}\frac{\rho
^{-2(n+2)}(|\omega |^{2}+\beta ^{2}|z|^{2})|z|^{2}-i(\rho ^{-2(n+2)}\beta
^{2}t|z|^{2})}{|\omega +i\beta ^{2}|^{2n+2}},
\end{equation*}%
and thus, using the non-isotropic scaling $\hat{t}=t/\beta ^{2},\ \hat{z}%
=z/\beta $ and notice that 
\begin{equation*}
dV_{\mathring{\theta}}=dV_{\mathring{\theta}}(z,t)=\beta ^{2n+2}dV_{%
\mathring{\theta}}(\hat{z},\hat{t}),
\end{equation*}%
we have 
\begin{equation}
\begin{split}
& \int_{U_{\beta }(L)}2A_{p}\Big(\sum_{\alpha =1}^{n}u_{\beta }(\mathring{Z}%
_{\alpha }u_{\beta })(\mathring{Z}_{\bar{\alpha}}\rho ^{-2n})+\text{conjugate%
}\Big)dV_{\mathring{\theta}} \\
& =2n^{2}A_{p}\beta ^{-2n}\int_{U_{\beta }(L)}\left( \frac{\hat{\rho}%
^{-2(n+2)}(|\hat{\omega}|^{2}+|\hat{z}|^{2})|\hat{z}|^{2}}{|\hat{\omega}%
+i|^{2n+2}}\right) dV_{\mathring{\theta}}(\hat{z},\hat{t})
\end{split}
\label{543}
\end{equation}%
It is easy to see that the integrand on the right hand side of \eqref{543}
is a function of $O(\hat{\rho}^{-6n-2})$ and that $U_{\beta }(\infty )$
contains a fixed neighborhood of $\infty $ for all large $\beta $ (by Lemma %
\ref{keylem1}), we conclude that, as $L\rightarrow \infty $, 
\begin{equation*}
\int_{U_{\beta }(L)}2A_{p}\Big(\sum_{\alpha =1}^{n}u_{\beta }(\mathring{Z}%
_{\alpha }u_{\beta })(\mathring{Z}_{\bar{\alpha}}\rho ^{-2n})+\text{conjugate%
}\Big)dV_{\mathring{\theta}}\geq \tilde{C}_{n}A_{p}\beta ^{-2n},
\end{equation*}%
for some positive dimensional constant $\tilde{C}_{n}$. It is similar to
check that the integral of the higher decay order of \eqref{542} is of order 
$\beta ^{-2n-1}$, we thus complete the proof of the lemma (with $C_{n}=%
\tilde{C}_{n}\frac{a_{n}}{2\pi })$.
\end{proof}

Finally, we need to estimate the boundary term.

\begin{lemma}
\label{keylem4} It holds that%
\begin{equation*}
\lim_{L\rightarrow \infty }\int_{\partial U_{\beta }(L)}u_{\beta
}(e_{2n}u_{\beta })\cdot h^{2}\ \mathring{\theta}\wedge (d\mathring{\theta}%
)^{n-1}\wedge e^{n}=O(\beta ^{-2n-1}).
\end{equation*}
\end{lemma}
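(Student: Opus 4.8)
Write $\partial U_\beta(L)$ as the union of its outer piece $\Sigma_L:=\{\rho=L\}\cap U_\beta(\infty)$ and its inner piece $\Gamma_\beta:=\partial U_\beta(\infty)=\{(z,t):u_\beta(z,t)=\beta^{-n}(1+\varepsilon)^{-1}\}$. Since $\Gamma_\beta$ does not depend on $L$, the limit $L\to\infty$ acts only on the $\Sigma_L$-contribution, so it suffices to show that the $\Sigma_L$-integral tends to $0$ as $L\to\infty$ and that the $\Gamma_\beta$-integral is $O(\beta^{-2n-1})$. Everywhere in the asymptotic chart one has the pointwise bounds $u_\beta\le\beta^n\rho^{-2n}$ and $|\nabla_b^{\mathring\theta}u_\beta|\le C_n\beta^n\rho^{-2n-1}$, which follow at once from $u_\beta=\beta^n|\omega+i\beta^2|^{-n}$, the formula for $\mathring Z_\alpha u_\beta$ in $(\ref{541})$, and $|\omega+i\beta^2|\ge\rho^2$, $|z|\le\rho$; in particular $|e_{2n}u_\beta|\le|\nabla_b^{\mathring\theta}u_\beta|$ because $e_{2n}$ is a unit horizontal vector. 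Finally $h$ is bounded on the regions involved: $h\to1$ as $\rho\to\infty$, and by Lemma $\ref{keylem1}$ one has $\Gamma_\beta\subset U_\beta(\infty)\subset\{\rho^2\ge\tfrac{\gamma_1}{2}R\}$, so for the fixed large $R$ the expansion $(\ref{h})$ gives $h=1+O(R^{-n})$ on $\Gamma_\beta$.

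\textbf{Outer piece.} For $L$ large, $\Sigma_L$ is the full Heisenberg sphere $\{\rho=L\}$. The $2n$-form $\mathring\theta\wedge(d\mathring\theta)^{n-1}\wedge e^n$ is homogeneous of weight $2n+1$ under the Heisenberg dilations (weight $2$ for $\mathring\theta$ and for $d\mathring\theta$, weight $1$ for $e^n$), and $\{\rho=L\}$ is the $L$-dilate of $\{\rho=1\}$, so $\int_{\{\rho=L\}}\bigl|\mathring\theta\wedge(d\mathring\theta)^{n-1}\wedge e^n\bigr|=C\,L^{2n+1}$. Combining this with the bounds above,
\begin{equation*}
\Bigl|\int_{\Sigma_L}u_\beta\,(e_{2n}u_\beta)\,h^2\,\mathring\theta\wedge(d\mathring\theta)^{n-1}\wedge e^n\Bigr|\le C\,(\beta^n L^{-2n})(\beta^n L^{-2n-1})\,L^{2n+1}=C\beta^{2n}L^{-2n}\longrightarrow 0
\end{equation*}
as $L\to\infty$ (for $n\ge1$).

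\textbf{Inner piece.} Apply the non-isotropic dilation $\hat z=z/\beta$, $\hat t=t/\beta^2$ used in the proof of Lemma $\ref{keylem3}$. Then $u_\beta=\beta^{-n}u_1(\hat z,\hat t)$ with $u_1=|\hat\omega+i|^{-n}$, $\hat\omega=\hat t+i|\hat z|^2$; the horizontal frame scales by $\beta^{-1}$, so $e_{2n}u_\beta=\beta^{-n-1}\hat e_{2n}u_1$; and $\mathring\theta\wedge(d\mathring\theta)^{n-1}\wedge e^n=\beta^{2n+1}\,\mathring{\hat\theta}\wedge(d\mathring{\hat\theta})^{n-1}\wedge\hat e^n$. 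The powers of $\beta$ cancel, so the $\Gamma_\beta$-integral equals $\int_{\widehat\Gamma}u_1\,(\hat e_{2n}u_1)\,h^2\,\mathring{\hat\theta}\wedge(d\mathring{\hat\theta})^{n-1}\wedge\hat e^n$, where $\widehat\Gamma=\{u_1=(1+\varepsilon)^{-1}\}$. Since $\varepsilon=R\beta^{-2}\to0$, $\widehat\Gamma$ is the double graph $\hat t=\pm g(\hat z)$, $g(\hat z)=\bigl((1+\varepsilon)^{2/n}-(|\hat z|^2+1)^2\bigr)^{1/2}$, over the ball $\{|\hat z|^2\le(1+\varepsilon)^{1/n}-1\}$ of radius $\sim\sqrt\varepsilon$; on it $u_1=1+O(\varepsilon)$, and since $\mathring{\hat Z}_\alpha u_1\propto\hat z^{\bar\alpha}$ vanishes at the origin, $\hat e_{2n}u_1=O(|\hat z|)=O(\sqrt\varepsilon)$. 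Pulling $\mathring{\hat\theta}\wedge(d\mathring{\hat\theta})^{n-1}\wedge\hat e^n$ back to this graph, its density over the base ball is $\le C_n\bigl(|\partial_{\hat y^n}g|+|\hat x^n|\bigr)$ times Lebesgue measure, and since $g^2$ vanishes linearly at the boundary of the ball, an elementary integration gives $\int_{\widehat\Gamma}\bigl|\mathring{\hat\theta}\wedge(d\mathring{\hat\theta})^{n-1}\wedge\hat e^n\bigr|=O(\varepsilon^n)$. Therefore the inner contribution is at most $\sup_{\widehat\Gamma}\bigl(u_1|\hat e_{2n}u_1|h^2\bigr)\cdot O(\varepsilon^n)=O(\sqrt\varepsilon)\cdot O(\varepsilon^n)=O(\varepsilon^{n+1/2})=O(R^{n+1/2}\beta^{-2n-1})=O(\beta^{-2n-1})$. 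Adding the outer and inner estimates proves the lemma.

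\textbf{Main difficulty.} The one nontrivial point is the bound $\int_{\widehat\Gamma}\bigl|\mathring{\hat\theta}\wedge(d\mathring{\hat\theta})^{n-1}\wedge\hat e^n\bigr|=O(\varepsilon^n)$: the $2n$-form induced on the level set $\{u_1=(1+\varepsilon)^{-1}\}$ has an integrable but genuine singularity along the equator $\{\hat t=0\}$, where the graph $\hat t=\pm g(\hat z)$ becomes vertical, so one must verify by a direct one-variable estimate that $\int|\partial_{\hat y^n}g|$ over the base ball is still $O(\varepsilon^n)$ and not larger. Everything else is bookkeeping of dilation weights together with the vanishing of $\nabla_b u_1$ at the origin.
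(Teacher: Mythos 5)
Your proof is correct, and its skeleton is the same as the paper's: split $\partial U_\beta(L)$ into the Heisenberg sphere $\{\rho=L\}$ and the level set $\partial U_\beta(\infty)$, kill the outer piece as $L\to\infty$ using $u_\beta\le\beta^n\rho^{-2n}$, $|\nabla_b^{\mathring\theta}u_\beta|\le C\beta^n\rho^{-2n-1}$ and the $L^{2n+1}$ scaling of the $p$-area, and handle the inner piece by the non-isotropic dilation $\hat z=z/\beta$, $\hat t=t/\beta^2$, under which the explicit powers of $\beta$ cancel and the gain comes from $|\hat e_{2n}u_1|=O(|\hat z|)=O(\sqrt\varepsilon)$ together with the smallness of the rescaled level set. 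The only genuine divergence is in how the rescaled surface measure is estimated: the paper bounds the $p$-area form by the Riemannian area form of the adapted metric ($d\hat\sigma\le d\hat A$) and then performs a second, Euclidean dilation $\tilde z=\beta\hat z$, $\tilde t=\beta\hat t$ to reduce to a convergent integral $\int|\tilde z|\,d\tilde A$ over a surface of fixed size, extracting $\beta^{-2n-1}$; you instead write $\widehat\Gamma$ as the double graph $\hat t=\pm g(\hat z)$ and compute the induced $2n$-form directly, whose density over the base ball is indeed $c_n|\partial_{\hat y^n}g+2\hat x^n|$ (only the $d\hat t$- and $d\hat y^n$-components of $\mathring\theta$ survive against $(d\mathring\theta)^{n-1}\wedge d\hat x^n$), and then check by the one-variable computation $\int_0^{r_0}r^{2n}(r_0^2-r^2)^{-1/2}dr=C_nr_0^{2n}$ that the equatorial singularity, where the graph becomes vertical, still gives total mass $O(\varepsilon^n)$. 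Both routes yield the same $O(\varepsilon^{n+1/2})=O(R^{n+1/2}\beta^{-2n-1})$ for the inner piece; the paper's version avoids dealing with the singular graph density at the cost of invoking $d\hat\sigma\le d\hat A$ and a second rescaling, while yours is more explicit and self-contained, at the cost of the equator estimate you correctly flag as the one delicate point.
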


\begin{proof}
We first split the boundary $\partial U_{\beta }(L)$ into inner and outer
parts: 
\begin{equation*}
\partial U_{\beta }(L)=\partial _{1}U_{\beta }(L)\cup \partial _{2}U_{\beta
}(L),
\end{equation*}%
where the inner boundary is 
\begin{equation*}
\partial _{1}U_{\beta }(L)=\{(z,t):(\frac{t}{\beta ^{2}})^{2}+2|\frac{z}{%
\beta }|^{2}+|\frac{z}{\beta }|^{4}=(1+\varepsilon )^{\frac{2}{n}}-1\}
\end{equation*}%
and the outer boundary is 
\begin{equation*}
\partial _{2}U_{\beta }(L)=\{(z,t):t^{2}+|z|^{4}=L^{4}\},
\end{equation*}%
which actually is the Heisenberg sphere with radius $L$. First we estimate
the outer boundary. For each fixed $\beta $, on $\partial _{2}U_{\beta }(L)$%
, we have 
\begin{equation*}
\begin{split}
u_{\beta }(z,t)& =O(L^{-2n}),\ |\nabla _{b}^{\mathring{\theta}}u_{\beta
}(z,t)|=O(L^{-2n-1}), \\
h(z,t)& =1+A_{p}L^{-2n}+O(L^{-2n-1}),
\end{split}%
\end{equation*}%
and hence 
\begin{equation}
|u_{\beta }(e_{2n}u_{\beta })\cdot h^{2}|\text{ }=\text{ }O(L^{-4n-1}).
\label{551}
\end{equation}%
On the other hand, let $d\sigma (\rho )=\mathring{\theta}\wedge (d\mathring{%
\theta})^{n-1}\wedge e^{n}$ be the $p$-area form on the Heisenberg sphere $%
\{z,t):t^{2}+|z|^{4}=\rho ^{4}\}$. Then we have $d\sigma (L)=L^{2n+1}d\sigma
(1)$, which, together with \eqref{551}, implies that 
\begin{equation}
|\int_{\partial U_{\beta }(L)}u_{\beta }(e_{2n}u_{\beta })\cdot h^{2}\ 
\mathring{\theta}\wedge (d\mathring{\theta})^{n-1}\wedge e^{n}|\text{ }=%
\text{ }O(L^{-2n}).  \label{552}
\end{equation}%
Next, we estimate the inner boundary. Let $d\sigma =\mathring{\theta}\wedge
(d\mathring{\theta})^{n-1}\wedge e^{n}$ be the $p$-area form on the inner
boundary. Using the non-isotropic scaling $\hat{t}=t/\beta ^{2},\ \hat{z}%
=z/\beta $, we have 
\begin{equation}
\begin{split}
d\sigma & =\beta ^{2n+1}d\hat{\sigma} \\
u_{\beta }& =\beta ^{n}\cdot |\omega +i\beta ^{2}|^{-n}=\beta ^{-n}\cdot |%
\hat{\omega}+i|^{-n}.
\end{split}
\label{553}
\end{equation}%
Since $|\nabla _{b}^{\mathring{\theta}}\omega |=2|z|$, it follows that 
\begin{equation}
|\nabla _{b}^{\mathring{\theta}}u_{\beta }|\leq c_{n}\beta ^{n}|\omega
+i\beta ^{2}|^{-n-1}=c_{n}\beta ^{-n-1}|\hat{\omega}+i|^{-n-1}|\hat{z}|.
\label{554}
\end{equation}%
From \eqref{553}, \eqref{554}, and notice that $h$ is bounded, we have 
\begin{equation}
\int_{\partial _{1}U_{\beta }(L)}u_{\beta }(e_{2n}u_{\beta })\cdot h^{2}\
d\sigma \leq \int_{\partial _{1}U_{\beta }(L)}c_{n}|\hat{\omega}+i|^{-2n-1}|%
\hat{z}|d\hat{\sigma},  \label{555}
\end{equation}%
here $\partial _{1}U_{\beta }(L)=\{(\hat{z},\hat{t}):\hat{t}^{2}+2|\hat{z}%
|^{2}+|\hat{z}|^{4}=(1+\varepsilon )^{\frac{2}{n}}-1\}$. And, from %
\eqref{keyin1}, it is easy to see that 
\begin{equation}
|\hat{\omega}+i|^{2}=\hat{t}^{2}+2|\hat{z}|^{2}+|\hat{z}|^{4}+1=(1+%
\varepsilon )^{\frac{2}{n}}\leq \frac{R\gamma _{2}}{\beta ^{2}}+1
\label{556}
\end{equation}%
on $\partial _{1}U_{\beta }(L)$. Substituting \eqref{556} into \eqref{555},
we obtain 
\begin{equation}
\begin{split}
\int_{\partial _{1}U_{\beta }(L)}u_{\beta }(e_{2n}u_{\beta })\cdot h^{2}\
d\sigma & \leq c_{n}\left( 1+\frac{R\gamma _{2}}{\beta ^{2}}\right)
^{-(2n+1)/2}\int_{\partial _{1}U_{\beta }(L)}|\hat{z}|d\hat{\sigma} \\
& \leq c_{n}\left( 1+\frac{R\gamma _{2}}{\beta ^{2}}\right)
^{-(2n+1)/2}\int_{\partial _{1}U_{\beta }(L)}|\hat{z}|d\hat{A},
\end{split}
\label{557}
\end{equation}%
where $d\hat{A}$ is the area form with respect to the Riemannian metric
induced from the adapted metric $\frac{1}{2}d\hat{\theta}(\cdot ,J\cdot )$ $%
+ $ $\hat{\theta}^{2}$ of $(M^{2n+1},\hat{\theta})$ and, for the last
inequality, we have used the basic result $d\hat{\sigma}\leq d\hat{A}$. Now,
using the Euclidean dilation $\tilde{t}=\hat{t}\beta ,\ \tilde{z}=\hat{z}%
\beta $, we have 
\begin{equation*}
\begin{split}
d\hat{A}& =\beta ^{-2n}d\tilde{A}, \\
\partial _{1}U_{\beta }(L)& =\left\{ (\tilde{z},\tilde{t}):\tilde{t}^{2}+2|%
\tilde{z}|^{2}+\frac{|\tilde{z}|^{4}}{\beta ^{2}}=\beta ^{2}\left(
(1+\varepsilon )^{\frac{2}{n}}-1\right) \right\} .
\end{split}%
\end{equation*}%
Let $C(R)=\beta ^{2}\left( (1+\varepsilon )^{\frac{2}{n}}-1\right) $. Then,
from \eqref{keyin1}, 
\begin{equation*}
R\cdot \gamma _{1}\leq C(R)\leq R\cdot \gamma _{2},
\end{equation*}%
where $\gamma _{1},\gamma _{2}$ are independent of $\beta $. Therefore 
\begin{equation}
\begin{split}
\int_{\partial _{1}U_{\beta }(L)}|\hat{z}|d\hat{A}& \leq \beta
^{-2n-1}\int_{\left\{ (\tilde{z},\tilde{t}):\tilde{t}^{2}+2|\tilde{z}|^{2}+%
\frac{|\tilde{z}|^{4}}{\beta ^{2}}=C(R)\right\} }|\tilde{z}|d\tilde{A} \\
& =O(\beta ^{-2n-1}),
\end{split}
\label{558}
\end{equation}%
where, for the last equality, we have used the fact that, as $\beta
\rightarrow \infty $ 
\begin{equation*}
\int_{\left\{ (\tilde{z},\tilde{t}):\tilde{t}^{2}+2|\tilde{z}|^{2}+\frac{|%
\tilde{z}|^{4}}{\beta ^{2}}=C(R)\right\} }|\tilde{z}|d\tilde{A}\rightarrow
\int_{\left\{ (\tilde{z},\tilde{t}):\tilde{t}^{2}+2|\tilde{z}%
|^{2}=C(R)\right\} }|\tilde{z}|d\tilde{A},
\end{equation*}%
which is a finite number. Due to \eqref{557} and \eqref{558}, the lemma
follows.
\end{proof}

By Lemma \ref{keylem2}, Lemma \ref{keylem3} and Lemma \ref{keylem4}, we
reduce (\ref{521}) to (\ref{TFE}). We have completed the proof of Theorem %
\ref{esofya}.

\subsection{Proof of Theorem \protect\ref{YMP}}

By Corollary \ref{PMT'} we obtain that the associated p-mass $m\geq 0.$ In
case $m>0,$ we have $A_{p}>0$ by Proposition \ref{BlmG} (2). It follows from
(\ref{TFE}) in Theorem \ref{esofya} that 
\begin{equation*}
\mathcal{Y}{(M,J)<}\mathcal{Y}(S^{5},\hat{J}).
\end{equation*}

\noindent Here we have used $\mathcal{Y}(M,J)\leq E_{\hat{\theta}}(\phi
_{\beta })/\Vert \phi _{\beta }\Vert _{s}^{2}$ by the definition of $%
\mathcal{Y}(M,J)$. Then a fundamental theorem in \cite{JL} tells us that $%
\mathcal{Y}{(M,J)}$ can be attained by a minimizer. In case $m=0,$ we
conclude that $({M,J)}$ is CR equivalent to $(S^{5},\hat{J})$ by Corollary %
\ref{PMT'}. Then the standard contact form on $S^{5}$ is a minimizer to
attain $\mathcal{Y}(S^{5},\hat{J}).$

\section{Examples}

In this section we are going to provide many examples satisfying the
assumption of Corollary \ref{PMT'} and Theorem \ref{YMP}, namely those
closed (compact with no boundary), contact spin 5-manifolds which admit a
spherical CR structure with positive CR Yamabe constant.

\bigskip

\textbf{Example 1}. ($S^{5}/\mathbb{Z}_{p},$ $p:$ odd integer) Let $S^{5}$
denote the unit sphere in $\mathbb{C}^{3}.$ Let $\mathbb{Z}_{p}$ := $\mathbb{%
Z}/p\mathbb{Z}$ denote the finite cyclic group of order $p.$ $\mathbb{Z}_{p}$
acts on $\mathbb{C}^{3}$ through the diagonal matrices:%
\begin{equation*}
\left( 
\begin{array}{ccc}
e^{\frac{2\pi ki}{p}} & 0 & 0 \\ 
0 & e^{\frac{2\pi ki}{p}} & 0 \\ 
0 & 0 & e^{\frac{2\pi ki}{p}}%
\end{array}%
\right) \in U(3)
\end{equation*}

\noindent where $k=0,$ $1,$ $2,$ $...,$ $p-1$ and it induces a free action
on $S^{5}.$ So $S^{5}/\mathbb{Z}_{p}$ is a manifold. The standard contact
structure (or bundle) $\hat{\xi}$ on $S^{5}$ is given by the complex
invariant tangent bundle:%
\begin{equation*}
\hat{\xi}:=TS^{5}\cap J_{\mathbb{C}^{3}}TS^{5}
\end{equation*}

\noindent where $J_{\mathbb{C}^{3}}$ denotes the complex structure of $%
\mathbb{C}^{3}.$ Since $J_{\mathbb{C}^{3}}$ is invariant under the $\mathbb{Z%
}_{p}$-action, $\hat{\xi}$ is invariant under the $\mathbb{Z}_{p}$-action
too. So $S^{5}/\mathbb{Z}_{p}$ is a contact manifold with the induced
contact structure, still denoted as $\hat{\xi}.$

To see whether $S^{5}/\mathbb{Z}_{p}$ is spin, we observe that $H^{2}(S^{5}/%
\mathbb{Z}_{p},\mathbb{Z}_{2})$ $=$ $0$ if $p$ is an odd integer. This fact
can be seen as follows. First we will use the known result below:%
\begin{eqnarray}
H^{k}(S^{5}/\mathbb{Z}_{p},\mathbb{Z)}&=&\mathbb{Z}, \text{ }k=0,5
\label{6-1} \\
&=&0,\text{ }k=1,3  \notag \\
&=&\mathbb{Z}_{p},\text{ }k=2,4.  \notag
\end{eqnarray}

\noindent By Poincare duality we learn that $H_{2}(S^{5}/\mathbb{Z}_{p},%
\mathbb{Z)}$ $\mathbb{\cong }$ $H^{3}(S^{5}/\mathbb{Z}_{p},\mathbb{Z)}$ $=$ $%
0$ and $H_{1}(S^{5}/\mathbb{Z}_{p},\mathbb{Z)}$ $\mathbb{\cong }$ $%
H^{4}(S^{5}/\mathbb{Z}_{p},\mathbb{Z)}$ $=$ $\mathbb{Z}_{p}.$ It follows that%
\begin{eqnarray}
H^{2}(S^{5}/\mathbb{Z}_{p},\mathbb{Z}_{2}\mathbb{)} &\mathbb{\cong }%
&Hom(H_{2}(S^{5}/\mathbb{Z}_{p},\mathbb{Z}_{2}\mathbb{)}\oplus
Ext(H_{1}(S^{5}/\mathbb{Z}_{p},\mathbb{Z)},\mathbb{Z}_{2})  \label{6-2} \\
&=&0\oplus Ext(\mathbb{Z}_{p},\mathbb{Z}_{2})=\mathbb{Z}_{(p,2)}=0  \notag
\end{eqnarray}

\noindent if $p$ is an odd integer (so $(p,2)=1).$ Now the second
Stiefel-Whitney class $w_{2}(T(S^{5}/\mathbb{Z}_{p}))$ $\in $ $H^{2}(S^{5}/%
\mathbb{Z}_{p},\mathbb{Z}_{2}\mathbb{)}=0$ by (\ref{6-2}), so $w_{2}(T(S^{5}/%
\mathbb{Z}_{p}))$ $=$ $0.$ Therefore $S^{5}/\mathbb{Z}_{p}$ is spin when $p$
is an odd integer. We remark that $\mathbb{RP}^{5}$ $:=$ $S^{5}/\mathbb{Z}%
_{2}$ is not spin (see, for instance, Proposition 4.5 on page 235 in \cite%
{Hu}).

The standard CR structure $\hat{J}$ on ($S^{5},\hat{\xi})$ defined by $J_{%
\mathbb{C}^{3}}$ restricted on $\hat{\xi}$ decends to $S^{5}/\mathbb{Z}_{p}$
since the $\mathbb{Z}_{p}$-action preserves $\hat{J}.$ This CR structure on $%
S^{5}/\mathbb{Z}_{p}$ is spherical since $\hat{J}$ is. The standard contact
form $\hat{\theta}$ on $S^{5}$ \cite[p.176]{JL} is invariant under the $%
\mathbb{Z}_{p}$-action. So it decends to $S^{5}/\mathbb{Z}_{p}.$ On the
other hand, the pseudohermitian (or Tanaka-Webster) scalar curvature with
respect to $(\hat{J},\hat{\theta})$ is a positive constant. It follows that
the CR Yamabe constant for $S^{5}/\mathbb{Z}_{p}$ is positive.

\bigskip

\textbf{Example 2.} ($S^{4}\times S_{(a)}^{1},$ $a>1)$ Let $H_{n}$ be the
Heisenberg group (see the first paragraph of the Appendix for the
description). On $H_{n}\backslash \{0\},$ for $a>1$ we define the dilations $%
\tau _{a}$ by%
\begin{equation*}
\tau _{a}(z,t)=(az,a^{2}t)
\end{equation*}

\noindent where $(z,t)\in H_{n},$ $z=(z^{1},$ $...,$ $z^{n})$ $\in $ $%
\mathbb{C}^{n},$ $t\in \mathbb{R}$. Consider the contact form 
\begin{equation*}
\check{\theta}:=\frac{\mathring{\theta}}{\rho ^{2}}
\end{equation*}

\noindent on $H_{n}\backslash \{0\},$ where 
\begin{eqnarray*}
\mathring{\theta} &:=&dt+iz^{\beta }dz^{\bar{\beta}}-iz^{\bar{\beta}%
}dz^{\beta }, \\
\rho &:=&(|z|^{4}+t^{2})^{1/4}
\end{eqnarray*}

\noindent (cf. (\ref{A1}), (\ref{A2})). Observe that $\tau _{a}^{\ast }(%
\check{\theta})=\check{\theta}$ and ($\tau _{a})_{\ast }\mathring{J}$ $=$ $%
\mathring{J}$($\tau _{a})_{\ast }$ (cf. (\ref{A1})), i.e. $\tau _{a}$ is a
pseudohermitian automorphism of $(H_{n}\backslash \{0\},$ $\mathring{J},$ $%
\check{\theta}).$ So the pseudohermitian structure $(\mathring{J},$ $\check{%
\theta})$ decends to the quotient space ($H_{n}\backslash \{0\})/\Gamma _{a}$
where $\Gamma _{a}$ $:=$ $\{...,\tau _{a^{-1}},$ $1,$ $\tau _{a},$ $\tau
_{a^{2}},$ $...\}$. In particular, (($H_{n}\backslash \{0\})/\Gamma _{a},$ $%
\mathring{J})$ is a spherical CR manifold since ($H_{n},$ $\mathring{J})$ is
spherical. Topologically $H_{n}\backslash \{0\}$ $=$ $(0,\infty )$ $\times $ 
$S^{2n}(1)$ where $S^{2n}(1)$ $:=$ $\{\rho =1\}$ $\subset $ $H_{n}.$ For $a$ 
$>$ $1$ each slice $[a^{m-1},$ $a^{m})$ $\times $ $S^{2n}(1)$ is isomorphic
to one another as pseudohermitian manifolds through an element of $\Gamma
_{a}.$ Thus we use $S^{2n}\times S_{(a)}^{1}$ (to indicate the dependence on 
$a;$ topologically $S_{(a)}^{1}$ is the same as $S^{1})$ to denote the
quotient space ($H_{n}\backslash \{0\})/\Gamma _{a}.$

Next we claim that $H^{2}(S^{2n}\times S_{(a)}^{1},\mathbb{Z}_{2})=0$ and
hence $S^{2n}\times S_{(a)}^{1}$ is spin (since $w_{2}(S^{2n}\times
S_{(a)}^{1})$ will then be zero). Noting that $Tor(H^{p}(S^{2n},\mathbb{Z}%
_{2}),H^{q}(S^{1},\mathbb{Z))}$ $=$ $0$ since $H^{q}(S^{1},\mathbb{Z})$ is a
free abelian group for each $q,$ we then have, from the Kunneth formula
(e.g. p.123 in \cite{Vick}),%
\begin{eqnarray}
&&H^{2}(S^{2n}\times S^{1},\mathbb{Z}_{2}\otimes \mathbb{Z})
=\sum_{p+q=2}H^{p}(S^{2n},\mathbb{Z}_{2})\otimes H^{q}(S^{1},\mathbb{Z)}
\label{6-3} \\
&&\ \ \ \ \ =H^{2}(S^{2n},\mathbb{Z}_{2})\otimes H^{0}(S^{1},\mathbb{%
Z)\oplus }H^{1}(S^{2n},\mathbb{Z}_{2})\otimes H^{1}(S^{1},\mathbb{Z)}  \notag
\end{eqnarray}

\noindent in view of $H^{2}(S^{1},\mathbb{Z)=}0.$ For $n\geq 2$ we can
easily obtain $H^{2}(S^{2n},\mathbb{Z}_{2})$ $=$ $H^{1}(S^{2n},\mathbb{Z}%
_{2})$ $=$ $0$ (say, apply the universal coefficient theorem for $G$ $=$ $%
\mathbb{Z}_{2}$ in Theorem 3.14 on page 97 in \cite{Vick}, note that $Ext($%
free abelian, $\cdot $ $)$ $=$ $0$ and use the fact that $H_{2}(S^{2n},%
\mathbb{Z})$ $=$ $H_{1}(S^{2n},\mathbb{Z})$ $=$ $0).$ Substituting this into
(\ref{6-3}) and noting that $\mathbb{Z}_{2}\otimes \mathbb{Z}$ $=$ $\mathbb{Z%
}_{2},$ we obtain $H^{2}(S^{2n}\times S^{1},\mathbb{Z}_{2})$ $=$ $0$ for $%
n\geq 2.$

It is not hard to compute the Tanaka-Webster scalar curvature $W$ for $%
(S^{2n}\times S_{(a)}^{1},J_{0},$ $\check{\theta})$ as follows (cf. p.994 in 
\cite{CMMM}):%
\begin{equation*}
W=\frac{n(n+1)|z|^{2}}{2\rho ^{2}}.
\end{equation*}

\noindent Note that $W$ is nonnegative, but only vanishes on the circle $%
\{(0,t)$ $|$ $t\in \mathbb{R}^{\ast }\}/\Gamma _{a}.$ It follows that the CR
Yamabe constant $\mathcal{Y(}S^{2n}\times S_{(a)}^{1},\mathring{J})$ $>$ $0$
for any $n$ $\in $ $\mathbb{N}.$ Otherwise we have $\mathcal{Y(}S^{2n}\times
S_{(a)}^{1},\mathring{J})$ $=$ $0.$ Then by solving the Yamabe minimizer
problem (\cite{JL}), we can find (smooth) $u$ $>$ $0$ such that%
\begin{equation*}
(2+\frac{2}{n})\Delta _{b}u+Wu=0
\end{equation*}%
\noindent (note that our $\Delta _{b}$ is the negative sublaplacian).
Multiplying the above equation by $u$ and integrating give%
\begin{equation*}
\int_{S^{2n}\times S_{(a)}^{1}}[(2+\frac{2}{n})|\nabla _{b}u|^{2}+Wu^{2}]%
\check{\theta}\wedge (d\check{\theta})^{n}=0.
\end{equation*}%
\noindent It follows that $u\equiv 0,$ a contradiction. Altogether we have
shown that $S^{4}\times S_{(a)}^{1},$ $a>1,$ is a contact spin $5$-manifold
which admits a spherical CR structure $\mathring{J}$ with positive CR Yamabe
constant.

\bigskip

\textbf{Example 3 }($\mathbb{RP}^{5}$ $\sharp $ $\mathbb{RP}^{5})$ Although $%
\mathbb{RP}^{5}$ $:=$ $S^{5}/\mathbb{Z}_{2}$ is not spin as remarked in
Example 1, the connected sum of two copies of $\mathbb{RP}^{5}$ is indeed
spin. This fact can be seen as follows. First we define a $\mathbb{Z}_{2}$%
-action $\tau $ (identified with $\tau (1))$ on $S^{4}\times S^{1}$ by%
\begin{equation*}
\tau :((x_{1},...,x_{5}),z)\rightarrow ((-x_{1},...,-x_{5}),\bar{z})
\end{equation*}

\noindent where the coordinates are given in view of $S^{4}$ $\subset $ $%
\mathbb{R}^{5}$ and $S^{1}$ $\subset $ $\mathbb{C}$ as the unit sphere and
the unit circle respectively. It is not hard to see that topologically or
differentiably $\mathbb{RP}^{5}$ $\sharp $ $\mathbb{RP}^{5}$ has a $S^{1}$
fibration over $\mathbb{RP}^{4},$ which is the same as ($S^{4}\times
S^{1})/\tau .$ We compute the tangent bundle%
\begin{eqnarray}
T(\mathbb{RP}^{5}\sharp \mathbb{RP}^{5}) &\simeq &T((S^{4}\times S^{1})/\tau
)  \label{6-4} \\
&\simeq &T(S^{4}\times S^{1})/\tau .  \notag
\end{eqnarray}

\noindent On the other hand, $T(S^{4}\times S^{1})$ $\simeq $ $TS^{4}\oplus
TS^{1}$ $\simeq $ $TS^{4}\oplus \mathcal{E}^{1}$ $\simeq $ $\mathcal{E}^{5}$
where $\mathcal{E}^{1}$ and $\mathcal{E}^{5}$ denote the trivial bundle of
rank 1 and rank 5 respectively. Together with (\ref{6-4}) we conclude that $%
T(\mathbb{RP}^{5}\sharp \mathbb{RP}^{5})$ is trivial. Therefore $\mathbb{RP}%
^{5}\sharp \mathbb{RP}^{5}$ is spin. It is clear that $\mathbb{RP}^{5}$ is a
spherical CR 5-manifold with CR Yamabe constant $\mathcal{Y(}\mathbb{RP}%
^{5}) $ $>$ 0 since its Tanaka-Webster scalar curvature is a positive
constant, same as the standard pseudohermitian $S^{5}.$

To show that $\mathbb{RP}^{5}\sharp \mathbb{RP}^{5}$ in Example 3 is still a
spherical CR 5-manifold with $\mathcal{Y(}\mathbb{RP}^{5}\sharp \mathbb{RP}%
^{5})$ $>$ $0,$ we employ the following theorem:

\begin{theorem}
\label{connect} (\cite{CChiu}) Suppose $(M_{1},J_{1})$ and $(M_{2},J_{2})$
are two closed (compact with no boundary), spherical CR manifolds with $%
\mathcal{Y(}M_{k},J_{k})$ $>$ $0$ for $k=1,$ $2.$ Then their connected sum $%
M_{1}\#M_{2}$ admits a spherical CR structure $\tilde{J}$ with $\mathcal{Y(}%
M_{1}\#M_{2},\tilde{J})$ $>$ $0.$
\end{theorem}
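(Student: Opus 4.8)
The plan is to produce $\tilde J$ on $M_1\#M_2$ by a \emph{blow-up and glue} construction and then to verify $\mathcal{Y}(M_1\#M_2,\tilde J)>0$ by a coercivity estimate for the glued CR invariant sublaplacian. Since $\mathcal{Y}(M_k,J_k)>0$, the operator $L_b^{(k)}=b_n\Delta_b+W$ on $(M_k,J_k)$ has positive first eigenvalue $\lambda_k$, so a positive Green's function $G_{p_k}$ with pole at a chosen point $p_k$ exists; by Proposition \ref{GE} and Proposition \ref{BlmG} the blow-up $(M_k\setminus\{p_k\},J_k,G_{p_k}^{2/n}\theta_k)$ is asymptotically flat pseudohermitian, and by the transformation law (\ref{Lb-1}) it is scalar flat. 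More to the point, since $(M_k,J_k)$ is spherical there are CR normal coordinates near $p_k$: a neighbourhood $U_k\ni p_k$ is CR equivalent to a Heisenberg ball $\{\rho<r_k\}\subset(H_n,\mathring{J})$, with the contact form there equal to $\mathring{\theta}$.

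For the gluing, fix a small radius $\epsilon<r_k$ and form the manifolds-with-boundary $M_k^{\circ}:=M_k\setminus\{\rho<\epsilon\}$, whose collar $\{\epsilon<\rho<r_k\}$ is a Heisenberg annulus carrying $(\mathring{J},\mathring{\theta})$. A Heisenberg CR automorphism of inversion type --- a composition of a dilation with the inversion $(z,t)\mapsto(z/\omega,-t/|\omega|^2)$, $\omega=t+i|z|^2$ --- ``turns the annulus inside out'', interchanging its two boundary spheres; gluing $M_1^{\circ}$ to $M_2^{\circ}$ by identifying their collars through such a map produces a smooth closed manifold diffeomorphic to $M_1\#M_2$. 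Because the two sides and the connecting annulus all carry spherical CR structures and the gluing map preserves $\mathring{J}$, the glued CR structure $\tilde J$ is spherical. (Matching of orientations and smoothness of the glued contact distribution are routine, exactly as for connected sums in conformally flat geometry.)

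To see $\mathcal{Y}(M_1\#M_2,\tilde J)>0$, equip $M_1\#M_2$ with a contact form $\tilde\theta$ equal to the scalar-flat forms $G_{p_k}^{2/n}\theta_k$ away from a long collar, and on the connecting region equal to a rescaled copy of the cylindrical form $\check{\theta}=\mathring{\theta}/\rho^{2}$, whose Tanaka--Webster curvature on a Heisenberg cylinder is $W=\frac{n(n+1)|z|^{2}}{2\rho^{2}}\ge 0$ (the computation in Example 2), with a short interpolation layer at each end of the cylinder. Given a test function $u$, choose cutoffs $f_1,f_2,f_0$ with $f_1^2+f_2^2+f_0^2\equiv 1$, $f_k$ supported on side $k$ away from the neck and $f_0$ on the cylinder; then
\[
\int(b_n|\nabla_b u|^2+\tilde{W}u^2)\,dV_{\tilde\theta}=\sum_{j}\int\big(b_n|\nabla_b(f_j u)|^2+\tilde{W}f_j^2u^2\big)\,dV_{\tilde\theta}-b_n\int u^2\sum_{j}|\nabla_b f_j|^2\,dV_{\tilde\theta},
\]
where the $f_1,f_2$ terms are bounded below by $\lambda_k\|f_k u\|_2^2$ (since $f_k u$ is a valid Dirichlet test function on the compact $M_k$ for $L_b^{(k)}$, whose first eigenvalue is positive because $\mathcal{Y}(M_k,J_k)>0$), the $f_0$ term is nonnegative on the cylindrical part where $\tilde W\ge 0$, and all remaining contributions are confined to the two transition layers, of bounded geometry, where $\tilde W$ is bounded and $|\nabla_b f_j|$ is small because the cylinder has been made long. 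Absorbing these error terms --- if necessary by combining with the Sobolev-type argument of Proposition \ref{pr4-6}, inequality (\ref{est01}) --- yields $\int(b_n|\nabla_b u|^2+\tilde{W}u^2)\,dV_{\tilde\theta}\ge c\|u\|_2^2$ with $c>0$, so $\mathcal{Y}(M_1\#M_2,\tilde J)>0$.

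The geometric steps (blow-up, CR normal coordinates, the inversion gluing) are essentially bookkeeping with Heisenberg automorphisms. The main obstacle is the coercivity estimate of the last step: one must set up the contact form on the neck so that the curvature and cutoff errors introduced there are genuinely negligible for the spectral bound, i.e.\ obtain a positive lower bound for the first eigenvalue of the glued sublaplacian that is uniform in the gluing parameters. This is delicate because the transition layers, though of bounded geometry, still carry a fixed amount of (possibly negative) curvature, so one must balance the length of the cylindrical neck against the relevant Poincaré and Sobolev constants on it; carrying this out is precisely the content of \cite{CChiu}.
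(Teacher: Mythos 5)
The paper does not prove Theorem \ref{connect}; it is quoted from \cite{CChiu} and used as a black box, so there is no in-paper argument to compare against. Your construction --- Green's function blow-up of each side to a scalar-flat asymptotically flat end via Propositions \ref{GE} and \ref{BlmG}, then identification along a Heisenberg collar by a dilation composed with the CR inversion, with sphericality of $\tilde J$ following because every transition map is a CR automorphism of $(H_n,\mathring J)$ --- is the natural one and in all likelihood reproduces the scheme of \cite{CChiu} for spherical structures.

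The verification that $\mathcal{Y}(M_1\#M_2,\tilde J)>0$, however, has a genuine gap. The IMS localization identity is correct, and the side contributions can indeed be bounded below by $\lambda_k\|f_ku\|_2^2$, but only after routing the estimate through the conformal covariance of $L_b$: the $\tilde\theta$-energy of $f_ku$ equals the $\theta_k$-energy of $G_{p_k}f_ku$ on $M_k$, and $G_{p_k}$ is bounded above and below on $\mathrm{supp}\,f_k$; you should make this bookkeeping explicit. More seriously, you assert only that the $f_0$-term is nonnegative, and nonnegativity is not enough: since $\|u\|_2^2=\sum_j\|f_ju\|_2^2$, for $u$ concentrated on the neck one has $\|f_1u\|_2=\|f_2u\|_2=0$ and your right-hand side reduces to $0-(\text{cutoff error})\le 0$, giving no $L^2$ bound at all. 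One needs a quantitative lower bound $\ge\mu\|f_0u\|_2^2$ on the neck piece --- say a Dirichlet Poincar\'e inequality --- and then the constants must actually be balanced: lengthening the cylinder shrinks the cutoff gradients, but it degrades the Poincar\'e constant at the same rate $L^{-2}$, so ``make the cylinder long'' does not by itself decide the competition; and you also pass over the two transition layers, where $\tilde W$ need not be nonnegative. The cleaner route, which your mention of (\ref{est01}) hints at, is to replace the $L^2$-coercivity argument by the scale-invariant CR Sobolev inequality: after H\"older the cutoff error is bounded by $\|u\|_{b_n}^2\cdot\sum_j\|\nabla_bf_j\|_{L^{2n+2}}^2$, and the latter factor can be driven to zero by a logarithmic cutoff over many dyadic annuli, while each $\|f_ju\|_{b_n}^2$ is controlled by the Sobolev inequalities inherited from $\mathcal{Y}(M_k,J_k)>0$ and from the positivity of the cylinder's Yamabe constant (Example 2). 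You are right that carrying this out is precisely the content of \cite{CChiu}; as written, though, the final coercivity step does not close.
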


We remark that Theorem \ref{connect} still holds without the constraint on
CR manifolds being spherical (see \cite{CCH}). On the other hand, we have
the following fact about the connected sum of two spin manifolds:

\begin{remark}
\label{spin-conn} (\cite{Mil}; Remark 2.17 on p.91 in \cite{LM}) Given two
spin manifolds $M_{1}$ and $M_{2},$ we can equip their connected sum $%
M_{1}\#M_{2}$ with a spin structure (so that $M_{1}\#M_{2}$ and the disjoint
union $M_{1}\amalg M_{2}$ are spin cobordant).
\end{remark}

According to Theorem \ref{connect} and Remark \ref{spin-conn}, we conclude
the following result:

\begin{proposition}
\label{prop-ex} The connected sum of finitely many (duplication allowed)
5-manifolds chosen arbitrarily from the set consisting of $S^{5}/\mathbb{Z}%
_{p},$ $p:$ odd integer, $S^{4}\times S_{(a)}^{1},$ $a>1$ and $\mathbb{RP}%
^{5}$ $\sharp $ $\mathbb{RP}^{5}$ in Examples 1, 2 and 3 above is still a
closed, contact spin 5-manifold which admit a spherical CR structure with
positive CR Yamabe constant.
\end{proposition}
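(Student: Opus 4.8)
The plan is to argue by induction on the number $N$ of connected summands, starting the induction with Examples 1, 2 and 3 and carrying out the inductive step with Theorem \ref{connect} and Remark \ref{spin-conn}. First recall what has already been established: each model 5-manifold
\[
B\in\{\,S^{5}/\mathbb{Z}_{p}\ (p\ \text{odd}),\ \ S^{4}\times S_{(a)}^{1}\ (a>1),\ \ \mathbb{RP}^{5}\sharp \mathbb{RP}^{5}\,\}
\]
is a closed, contact and spin 5-manifold carrying a spherical CR structure $J_{B}$ on its contact bundle with $\mathcal{Y}(B,J_{B})>0$; for the first two this is Examples 1 and 2, and for $\mathbb{RP}^{5}\sharp\mathbb{RP}^{5}$ it is Example 3 (whose spin and positive-Yamabe assertions are themselves deduced from the first two models via Theorem \ref{connect} and Remark \ref{spin-conn}). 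Since connected sum is associative and commutative up to orientation-preserving diffeomorphism, any connected sum of finitely many of these models can be written as $M_{N}:=B_{1}\#\cdots\#B_{N}$ with each $B_{j}$ one of the three models, so it suffices to prove by induction on $N$ that $M_{N}$ is a closed, contact and spin 5-manifold admitting a spherical CR structure of positive CR Yamabe constant.

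The case $N=1$ is exactly Examples 1--3. For the inductive step, assume $M_{N}$ is closed, contact and spin and carries a spherical CR structure $\tilde{J}_{N}$ with $\mathcal{Y}(M_{N},\tilde{J}_{N})>0$, and let $B_{N+1}$ be one of the three models, with its spherical CR structure $J_{B_{N+1}}$ satisfying $\mathcal{Y}(B_{N+1},J_{B_{N+1}})>0$. Applying Theorem \ref{connect} to the pair $(M_{N},\tilde{J}_{N})$ and $(B_{N+1},J_{B_{N+1}})$ furnishes a spherical CR structure $\tilde{J}_{N+1}$ on $M_{N+1}:=M_{N}\#B_{N+1}$ with $\mathcal{Y}(M_{N+1},\tilde{J}_{N+1})>0$; in particular $M_{N+1}$ is a contact manifold, with contact bundle the one underlying $\tilde{J}_{N+1}$. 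Since $M_{N}$ and $B_{N+1}$ are both spin, Remark \ref{spin-conn} gives a spin structure on the smooth connected sum $M_{N+1}$, which is the same smooth manifold on which $\tilde{J}_{N+1}$ lives. Finally $M_{N+1}$ is a closed 5-manifold as a connected sum of closed 5-manifolds. This completes the induction and proves Proposition \ref{prop-ex}; in particular the displayed manifold $m_{1}(S^{5}/\mathbb{Z}_{p_{1}})\#l_{1}(S^{4}\times S_{(a)}^{1})\#m_{2}(S^{5}/\mathbb{Z}_{p_{2}})\#l_{2}(\mathbb{RP}^{5}\sharp\mathbb{RP}^{5})$ is covered.

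I do not expect a genuine obstacle here; the proof is organizational. The one point to be careful about is that the inductive hypothesis must be stated so as to simultaneously deliver, at each stage, all the input required by Theorem \ref{connect} (a closed spherical CR manifold with positive CR Yamabe constant) and by Remark \ref{spin-conn} (a spin manifold), and that the two constructions are compatible: in dimension $5$ the connected-sum smoothing is unique up to diffeomorphism once orientations are fixed (and spin implies orientable), so the spin structure produced by Remark \ref{spin-conn} and the spherical CR structure produced by Theorem \ref{connect} refer to the same underlying smooth manifold $M_{N}\#B_{N+1}$ and do not interfere.
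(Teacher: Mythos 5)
Your induction is correct and is essentially the paper's own argument, which simply combines the conclusions of Examples 1--3 with Theorem \ref{connect} and Remark \ref{spin-conn}. One small inaccuracy in your parenthetical: since $\mathbb{RP}^{5}$ is \emph{not} spin, the spin structure on $\mathbb{RP}^{5}\sharp\mathbb{RP}^{5}$ in Example 3 is obtained by showing its tangent bundle is trivial (via the identification with $(S^{4}\times S^{1})/\tau$), not via Remark \ref{spin-conn}, and its positive Yamabe constant comes from Theorem \ref{connect} applied to two copies of $\mathbb{RP}^{5}$ rather than to the first two models --- but this does not affect your proof, which only uses the stated conclusions of Example 3.
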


\section{Appendix: Some basic materials in pseudohermitian geometry}

We introduce some basic materials in pseudohermitian geometry. Some formulas
are used to deduce the Weizenbock formula\textbf{\ }in Section \ref{SSCB}.
We refer the reader to N. Tanaka \cite{Ta} and S. Webster \cite{We}.

Let $(M^{2n+1},\xi )$ denote a contact manifold with a coorientable (i.e. $%
TM/\xi $ is trivial) contact structure (or bundle) $\xi .$ A CR manifold $%
(M^{2n+1},\xi ,J)$ or $(M^{2n+1},J)$ (with $\xi $ suppressed) is a contact
manifold $(M^{2n+1},\xi )$ equipped with an almost complex structure, i.e.
an endomorphism $J:\xi \rightarrow \xi $ defined on $\xi $ such that $%
J^{2}=-1$. The endomorphism $J$ decomposes the complexification of $\xi $
into the direct sum of bundles of holomorphic vectors and anti-holomorphic
vectors $\xi \otimes \mathbb{C}=\xi _{1,0}\oplus \xi _{0,1}.$ We assume that 
$J$ is integrable, that is, $J$ satisfies the formal Frobenius condition $%
[\xi _{1,0},\xi _{1,0}]\subset \xi _{1,0}$ (as sections). A contact form $%
\theta $ is a global one-form such that $\xi =\ker {\theta }$ (exists by
coorientation of $\xi )$. A pseudohermitian manifold $(M^{2n+1},J,\theta )$
(with $\xi $ suppressed) is a contact manifold with a choice of CR structure 
$J$ together with a choice of contact form $\theta $. The Levi metric $%
L_{\theta }$ is defined by 
\begin{equation}
L_{\theta }(X,Y):=\frac{1}{2}d\theta (X,JY)\text{ for all }X,Y\in \xi
\label{Levi-0}
\end{equation}
(we use the convention that $\eta \wedge \vartheta (V,W)$ = $\eta
(V)\vartheta (W)$ $-$ $\eta (W)\vartheta (V)$ for 1-forms $\eta ,$ $%
\vartheta ,$ vectors $V,$ $W$). Let $T$ denote the Reeb vector field
associated to $\theta ,$ the unique vector field such that $\theta (T)$ $=$ $%
1$ and $L_{T}\theta $ $=$ $0$ ($L_{T}$ means the Lie derivative in the
direction $T).$For a choice of (admissible) coframe $\theta ^{\alpha }$ with 
$\theta ^{\alpha }(T)$ $=$ $0,$ we have the Levi equation%
\begin{equation}
d\theta =ih_{\alpha \bar{\beta}}\theta ^{\alpha }\wedge \theta ^{\bar{\beta}%
}.  \label{Levi}
\end{equation}

In 1978, S. Webster \cite{We} (cf. an equivalent formulation in \cite{Ta} by
N. Tanaka) showed that there is a natural connection in the bundle $\xi
_{1,0}$ adapted to a pseudohermitian structure $(J,\theta )$. Locally, there
exist unique $1$-forms $\theta _{\alpha }{}^{\beta }$ (connection forms)$,\
\tau ^{\beta }$ (torsion forms) satisfying the structure equations%
\begin{eqnarray}
d\theta ^{\beta } &=&\theta ^{\alpha }\wedge \theta _{\alpha }\text{ }%
^{\beta }+\theta \wedge \tau ^{\beta },  \label{SE} \\
0 &=&\theta _{\alpha }\text{ }^{\beta }+\theta _{\bar{\beta}}\text{ }^{\bar{%
\alpha}},\text{ }0=\tau _{\beta }\wedge \theta ^{\beta }  \notag
\end{eqnarray}%
where \{$\theta ^{\beta }\}$ is a unitary coframe (meaning $h_{\alpha \bar{%
\beta}}$ $=$.$\delta _{\alpha \beta })$. Let $\{Z_{\beta }\}$ denote a
unitary frame of $\xi _{1,0}$ dual to \{$\theta ^{\beta }\}$. These forms $%
\theta _{\alpha }{}^{\beta }$ satisfy the transformation law of connection
forms, so we can use them to define a connection. Let $T$ denote the Reeb
vector field associated to $\theta ,$ the unique vector field such that $%
\theta (T)$ $=$ $1$ and $L_{T}\theta $ $=$ $0$ ($L_{T}$ means the Lie
derivative in the direction $T).$ The \textbf{pseudohermitian }(or
Tanaka-Webster)\textbf{\ connection} $\nabla ^{p.h.}$ is defined by 
\begin{equation}
\begin{split}
\nabla ^{p.h.}{Z_{\alpha }}& =\theta _{\alpha }{}^{\beta }\otimes Z_{\beta }
\\
\nabla ^{p.h.}{Z_{\bar{\alpha}}}& =\theta _{\bar{\alpha}}{}^{\bar{\beta}%
}\otimes Z_{\bar{\beta}} \\
\nabla ^{p.h.}T& =0.
\end{split}
\label{Con_c}
\end{equation}

\noindent Differentiate the connection to define the curvature: $d\theta
_{\alpha }{}^{\beta }$ $-$ $\theta _{\alpha }{}^{\gamma }\wedge \theta
_{\gamma }{}^{\beta }$ $=$ $R_{\alpha }{}^{\beta }{}_{\rho \bar{\sigma}%
}\theta ^{\rho }\wedge \theta ^{\bar{\sigma}}$ $+\ $terms including the
torsion. The pseudohermitian-Ricci tensor is the hermitian form on $\xi
_{1,0}$ defined by 
\begin{equation*}
\rho (X,Y)=R_{\alpha \bar{\beta}}X^{\alpha }Y^{\bar{\beta}},
\end{equation*}%
where $X=X^{\alpha }Z_{\alpha },Y=Y^{\beta }Z_{\beta }$ and $R_{\alpha \bar{%
\beta}}=R_{\gamma }{}^{\gamma }{}_{\alpha \bar{\beta}}$. The Tanaka-Webster
scalar curvature is 
\begin{equation}
W:=R_{\beta }{}^{\beta },  \label{Wsc}
\end{equation}%
\noindent which is the contraction of the pseudohermitian-Ricci tensor. We
can also have "real formulation" for the pseudohermitian structure $%
(J,\theta )$. Write $\theta ^{\alpha }$ $=$ $\omega ^{\alpha }+i\omega
^{n+\alpha }$ for real coframe fields $\{\omega ^{1},$ $..,$ $\omega ^{n},$ $%
\omega ^{n+1},$ $..,$ $\omega ^{2n}\}$ and $Z_{\alpha }$ $=$ $\frac{1}{2}%
(e_{\alpha }-ie_{n+\alpha })$ for real frame fields $\{e_{1},$ $..,$ $e_{n},$
$e_{n+1},$ $..,$ $e_{2n}\}$ (orthonormal with respect to the Levi metric $%
L_{\theta }).$ It is easily seen that $\{\omega ^{A}\}_{A=1,..,2n}$ is dual
to $\{e_{A}\}_{A=1,..,2n}.$ Write%
\begin{equation}
\nabla ^{p.h.}e_{A}=\omega _{A}\text{ }^{B}e_{B}  \label{Con_r}
\end{equation}

\noindent for real connection forms $\omega _{A}$ $^{B},$ $1$ $\leq $ $A,$ $%
B $ $\leq $ $2n$. Comparing (\ref{Con_c}) with (\ref{Con_r}) gives%
\begin{eqnarray}
\theta _{\alpha }{}^{\beta } &=&\omega _{\alpha }\text{ }^{\beta }+i\omega
_{\alpha }\text{ }^{n+\beta }\text{ and}  \label{Sym_r} \\
\omega _{\alpha }\text{ }^{n+\beta } &=&-\omega _{n+\alpha }\text{ }^{\beta }%
\text{, }\omega _{\alpha }\text{ }^{\beta }=\omega _{n+\alpha }\text{ }%
^{n+\beta }.  \notag
\end{eqnarray}

\noindent From the condition $0$ $=$ $\theta _{\alpha }$ $^{\beta }+\theta _{%
\bar{\beta}}$ $^{\bar{\alpha}}$ in (\ref{SE}) and (\ref{Sym_r}), it follows
that%
\begin{equation*}
\omega _{A}\text{ }^{B}+\omega _{B}\text{ }^{A}=0,\text{ }1\leq A,B\leq 2n.
\end{equation*}

\noindent Note that if we denote the scalar curvature associated to $\omega
_{A}$ $^{B}$ by $R,$ then we have%
\begin{equation}
W=\frac{1}{4}R.  \label{SC}
\end{equation}

\noindent Let $u_{\alpha \beta }$ denote the second covariant derivative of
a function $u$ in the directions $Z_{\alpha },Z_{\beta }.$ Define the
subgradient $\nabla _{b}$ and the sublaplacian $\Delta _{b}$ (or $\nabla
_{b}^{\theta }$ and $\Delta _{b}^{\theta }$ to indicate the dependence on $%
\theta )$ by%
\begin{equation*}
\nabla _{b}u:=u^{\alpha }Z_{\alpha }+u^{\bar{\alpha}}Z_{\bar{\alpha}},
\end{equation*}%
\begin{equation}
\Delta _{b}u:=-(u_{\alpha }\text{ }^{\alpha }+u_{\bar{\alpha}}\text{ }^{\bar{%
\alpha}})  \label{SubL}
\end{equation}%
\noindent (notice the negative sign) where $u_{\alpha }$ $^{\alpha }$ :$=$ $%
u_{\alpha \bar{\beta}}h^{\alpha \bar{\beta}}$ $=$ $u_{\alpha \bar{\alpha}}$
for a unitary frame ($h^{\alpha \bar{\beta}}$ $=$ ($h_{\alpha \bar{\beta}%
})^{-1}$ $=$ $\delta _{\alpha \beta })$. Define the $CR$ invariant
sublaplacian $L_{b}$ by%
\begin{equation}
L_{b}:=b_{n}\Delta _{b}+W,\text{ }b_{n}=2+\frac{2}{n}.  \label{Lb}
\end{equation}%
\noindent Consider a new contact form $\hat{\theta}=u^{2/n}\theta $ for a
smooth positive function $u.$ $L_{b}$ rules the change of the Tanaka-Webster
scalar curvature:%
\begin{equation}
L_{b}u=\hat{W}u^{1+\frac{2}{n}}  \label{Lb-1}
\end{equation}%
\noindent where $\hat{W}$ is the Tanaka-Webster scalar curvature with
respect to $(J,\hat{\theta}).$ The Green's function $G_{p}$ of $L_{b}$ at $p$
satisfies%
\begin{equation}
L_{b}G_{p}=16\delta _{p}  \label{Lb-2}
\end{equation}
\noindent where $\delta _{p}$ is the delta function w.r.t. the volume form $%
dV_{\theta }$ $:=$ $\theta \wedge (d\theta )^{n}.$ We define the CR Yamabe
constant $\mathcal{Y}(M,J)$ as follows:%
\begin{equation}
\mathcal{Y}(M,J):=\inf_{\hat{\theta}}\frac{\int_{M}\hat{W}dV_{\hat{\theta}}}{%
(\int_{M}dV_{\hat{\theta}})^{\frac{n}{n+1}}}=\inf_{0<u\in C^{\infty }(M)}%
\frac{\int_{M}(b_{n}|\nabla _{b}u|^{2}+Wu^{2})dV_{\theta }}{%
(\int_{M}u^{b_{n}}dV_{\theta })^{\frac{2}{b_{n}}}}  \label{YMJ}
\end{equation}

\noindent where $|\nabla _{b}u|^{2}:=2h^{\alpha \bar{\beta}}u_{\alpha }u_{%
\bar{\beta}}.$ Given a background $W$ with respect to $(J,\theta ),$ we aim
to find a solution $u$ to (\ref{Lb-1}) with $\hat{W}$ $=$ constant, say $1$
\ This is the so called Yamabe problem. The CR Yamabe equation (with
critical Sobolev exponent) for $\hat{W}=1$ reads as follows: 
\begin{equation}
b_{n}\Delta _{b}u+Wu=u^{1+\frac{2}{n}}.  \label{YE}
\end{equation}

The structure equations imply ($h_{\alpha \bar{\beta}}$ $=$ $\delta _{\alpha
\beta }$ for a unitary (co)frame)%
\begin{equation}
\begin{split}
\lbrack Z_{\bar{\beta}},Z_{\alpha }]& =ih_{\alpha \bar{\beta}}T+\theta
_{\alpha }{}^{\gamma }(Z_{\bar{\beta}})Z_{\gamma }-\theta _{\bar{\beta}}{}^{%
\bar{\gamma}}(Z_{\alpha })Z_{\bar{\gamma}}, \\
\lbrack Z_{\beta },Z_{\alpha }]& =\theta _{\alpha }{}^{\gamma }(Z_{\beta
})Z_{\gamma }-\theta _{\beta }{}^{\gamma }(Z_{\alpha })Z_{\gamma }, \\
\lbrack Z_{\alpha },T]& =A^{\bar{\gamma}}{}_{\alpha }Z_{\bar{\gamma}}-\theta
_{\alpha }{}^{\gamma }(T)Z_{\gamma }.
\end{split}
\label{ide1}
\end{equation}

\noindent where we have written the torsion (forms) $\tau ^{\beta }$ $=$ $%
A^{\beta }$ $_{\bar{\alpha}}\theta ^{\bar{\alpha}}$ and $A^{\bar{\gamma}}$ $%
_{\alpha }$ $=$ $\overline{A^{\gamma }\text{ }_{\bar{\alpha}}}.$ Let $L_{T}$
denote the Lie differentiation in the direction $T.$ From (\ref{SE}) and the
third equality in (\ref{ide1}), it follows that%
\begin{equation}
L_{T}J=2iA^{\beta }\text{ }_{\bar{\alpha}}\theta ^{\bar{\alpha}}\otimes
Z_{\beta }-2iA^{\bar{\beta}}\text{ }_{\alpha }\theta ^{\alpha }\otimes Z_{%
\bar{\beta}}.  \label{LTJ}
\end{equation}

As a flat pseudohermitian manifold, the Heisenberg group plays an important
role in pseudohermitian geometry. We refer the reader to \cite{CC} and \cite%
{CL} for the details about the Heisenberg group, and to \cite{DT},\cite{Le1},%
\cite{Le2} and \cite{We} for pseudohermitian geometry. Denote by $H_{n}$ the
Heisenberg group, which is the space $\mathbb{R}^{2n+1}$ with coordinates $%
(x_{\beta },y_{\beta },t)$ as a set. It is a $(2n+1)$-dimensional Lie group
with group structure defined by 
\begin{equation*}
(x,y,t)\circ (x^{\prime },y^{\prime },t^{\prime })=(x+x^{\prime
},y+y^{\prime },t+t^{\prime }+2yx^{\prime }-2xy^{\prime }).
\end{equation*}%
The associated Lie algebra is spanned by the following left invariant vector
fields 
\begin{equation}
\mathring{e}_{\beta }:=\frac{1}{\sqrt{2}}\left( \frac{\partial }{\partial
x^{\beta }}+2y^{\beta }\frac{\partial }{\partial t}\right) ,\ \ \mathring{e}%
_{n+\beta }:=\frac{1}{\sqrt{2}}\left( \frac{\partial }{\partial y^{\beta }}%
-2x^{\beta }\frac{\partial }{\partial t}\right) ,\ \ \mathring{T}:=\frac{%
\partial }{\partial t}.  \label{7-8a}
\end{equation}%
The associated standard CR structure $\mathring{J}$ and contact form $%
\mathring{\theta}$ (or denoted by $\Theta $) are defined respectively by%
\begin{eqnarray}
\mathring{J}\mathring{e}_{\beta } &=&\mathring{e}_{n+\beta },\text{ }%
\mathring{J}\mathring{e}_{n+\beta }=-\mathring{e}_{\beta },  \label{A1} \\
\mathring{\theta} &=&dt+\sum_{\beta =1}^{n}(iz^{\beta }dz^{\bar{\beta}}-iz^{%
\bar{\beta}}dz^{\beta }).  \notag
\end{eqnarray}%
Here $z^{\beta }$ $:=$ $x^{\beta }+iy^{\beta }$. The contact bundle is $%
\mathring{\xi}$ $:=\text{ ker }\mathring{\theta}$. We linearly extend $%
\mathring{J}:\mathring{\xi}\otimes \mathbb{C}\rightarrow \mathring{\xi}%
\otimes \mathbb{C}$. Let $\mathring{Z}_{\beta }:=\frac{1}{2}(\mathring{e}%
_{\beta }-i\mathring{e}_{n+\beta })=\frac{1}{\sqrt{2}}\left( \frac{\partial 
}{\partial z^{\beta }}+iz^{\bar{\beta}}\frac{\partial }{\partial t}\right) .$
Then for all $\beta ,\gamma ,$ $\mathring{J}\mathring{Z}_{\beta }=i\mathring{%
Z}_{\beta },\ \mathring{J}\mathring{Z}_{\bar{\beta}}=-i\mathring{Z}_{\bar{%
\beta}}$ and $[\mathring{Z}_{\beta },\mathring{Z}_{\gamma }]=0\ \left(
\Rightarrow \lbrack \mathring{\xi}_{1,0},\mathring{\xi}_{1,0}]\subset 
\mathring{\xi}_{1,0}\right) $ where $\mathring{\xi}\otimes \mathbb{C}=%
\mathring{\xi}_{1,0}\oplus \mathring{\xi}_{0,1}.$ It is easily seen that the
frame $\{\mathring{T},\mathring{Z}_{\beta },\mathring{Z}_{\bar{\beta}}\}$ is
dual to the coframe $\{\mathring{\theta},\sqrt{2}dz^{\beta },\sqrt{2}dz^{%
\bar{\beta}}\}$. If we regard $\{\mathring{e}_{\beta },\mathring{e}_{n+\beta
}|1\leq \beta \leq n\}$ as an orthonormal basis, then this defines a metric
on $\mathring{\xi}$, which equals the Levi metric $L_{\mathring{\theta}}$
given by $L_{\mathring{\theta}}(X,Y)=\frac{1}{2}d\mathring{\theta}(X,%
\mathring{J}Y)$ for all $X,Y\in \mathring{\xi}$. The standard
pseudohermitian connection on $H_{n}$ is defined by 
\begin{equation*}
\mathring{\nabla}^{p.h.}\mathring{e}_{\beta }=\mathring{\nabla}^{p.h.}%
\mathring{e}_{n+\beta }=\mathring{\nabla}^{p.h.}\mathring{T}=0.
\end{equation*}%
It follows that the pseudohermitian connection forms $\mathring{\theta}%
_{\alpha }$ $^{\gamma }$ vanish:%
\begin{equation}
\mathring{\theta}_{\alpha }\text{ }^{\gamma }=0  \label{A1-1}
\end{equation}%
We define the Heisenberg norm $\rho $ on $H_{n}$ by%
\begin{equation}
\rho ^{4}=(|z|^{4}+t^{2})  \label{A2}
\end{equation}%
where $|z|^{2}=\sum_{\beta =1}^{n}|z^{\beta }|^{2}$.

Define the torsion tensor field of the pseudohermitian connection $\nabla
^{p.h.}$ by 
\begin{equation*}
\mathbb{T}(V,U)=\nabla _{V}^{p.h.}U-\nabla _{U}^{p.h.}V-[V,U],
\end{equation*}%
for all complex vector fields $V,U$. Then (\ref{ide1}) implies 
\begin{equation}
\begin{split}
\mathbb{T}(Z_{\alpha },Z_{\bar{\beta}})& =ih_{\alpha \bar{\beta}}T, \\
\mathbb{T}(Z_{\alpha },Z_{\beta })& =0, \\
\mathbb{T}(Z_{\alpha },T)& =-A^{\bar{\beta}}{}_{\alpha }Z_{\bar{\beta}}.
\end{split}
\label{ide2}
\end{equation}%
The real version of (\ref{ide2}) is 
\begin{equation*}
\begin{split}
\mathbb{T}(e_{\beta },e_{n+\beta })& =2T,\ \ \mathbb{T}(e_{n+\beta
},e_{\beta })=-2T,\ \ \mathbb{T}(e_{a},e_{b})=0,\ \ \ \text{otherwise}; \\
\mathbb{T}(e_{\gamma },T)& =-(ReA^{\bar{\beta}}{}_{\gamma })e_{\beta }+(ImA^{%
\bar{\beta}}{}_{\gamma })e_{n+\beta } \\
\mathbb{T}(e_{n+\gamma },T)& =(ImA^{\bar{\beta}}{}_{\gamma })e_{\beta
}+(ReA^{\bar{\beta}}{}_{\gamma })e_{n+\beta }
\end{split}%
\end{equation*}%
Recall that the curvature operator is defined by 
\begin{equation*}
R_{XY}^{p.h.}=\nabla _{X}^{p.h.}\nabla _{Y}^{p.h.}-\nabla _{Y}^{p.h.}\nabla
_{X}^{p.h.}-\nabla _{\lbrack X,Y]}^{p.h.}.
\end{equation*}%
And the Ric Tensor is defined by 
\begin{equation}
Ric^{p.h.}(X,Y)=-\big<R_{e_{a}X}^{p.h.}(e_{a}),Y\big>
\end{equation}%
We have the following Bianchi identity

\begin{lemma}[\textbf{{Bianchi identity}}]
\begin{equation}  \label{ide3}
\begin{split}
&R^{p.h.}_{XY}(Z)+R^{p.h.}_{YZ}(X)+R^{p.h.}_{ZX}(Y) \\
=&\mathbb{T}(X,[Y,Z])+\mathbb{T}(Y,[Z,X])+\mathbb{T}(Z,[X,Y]) \\
&+\nabla^{p.h.}_{X}(\mathbb{T}(Y,Z))+\nabla^{p.h.}_{Y}(\mathbb{T}%
(Z,X))+\nabla^{p.h.}_{Z}(\mathbb{T}(X,Y)).
\end{split}%
\end{equation}
In particular, if all $X,Y,Z$ are horizontal, then we have 
\begin{equation}  \label{ide4}
\begin{split}
&R^{p.h.}_{XY}(Z)+R^{p.h.}_{YZ}(X)+R^{p.h.}_{ZX}(Y) \\
=&\mathbb{T}(X,[Y,Z])+\mathbb{T}(Y,[Z,X])+\mathbb{T}(Z,[X,Y]),\ \ \text{mod}%
\ T.
\end{split}%
\end{equation}
\end{lemma}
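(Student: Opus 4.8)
The plan is to establish (\ref{ide3}) by a direct expansion from the definitions of the pseudohermitian curvature operator $R^{p.h.}$ and the torsion tensor $\mathbb{T}$, for arbitrary (complex) vector fields $X,Y,Z$; since both sides are tensorial in $X,Y,Z$, the frame-field form actually used later (e.g.\ in the proof of the Weitzenbock formula) follows at once. Throughout I write $\nabla$ for $\nabla^{p.h.}$ and $\sum_{\mathrm{cyc}}$ for the sum over the three cyclic permutations of $(X,Y,Z)$.

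First I would expand
\[
\sum_{\mathrm{cyc}} R^{p.h.}_{XY}Z = \sum_{\mathrm{cyc}}\big(\nabla_X\nabla_Y Z - \nabla_Y\nabla_X Z - \nabla_{[X,Y]}Z\big).
\]
Among the six second-order terms one groups $\nabla_X\nabla_Y Z$ with the term $-\nabla_X\nabla_Z Y$, which appears as a cyclic image of $-\nabla_Y\nabla_X Z$, and likewise for the other two cyclic classes; using $\nabla_Y Z-\nabla_Z Y=[Y,Z]+\mathbb{T}(Y,Z)$ (the defining relation of $\mathbb{T}$) this collapses to
\[
\sum_{\mathrm{cyc}} R^{p.h.}_{XY}Z = \sum_{\mathrm{cyc}}\nabla_X\big([Y,Z]+\mathbb{T}(Y,Z)\big) - \sum_{\mathrm{cyc}}\nabla_{[X,Y]}Z.
\]

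Next I would reindex the two bracket sums so that the cyclic class of $\nabla_X[Y,Z]$ is paired with that of $\nabla_{[Y,Z]}X$, which gives, again by the defining relation of $\mathbb{T}$,
\[
\sum_{\mathrm{cyc}}\nabla_X[Y,Z] - \sum_{\mathrm{cyc}}\nabla_{[X,Y]}Z = \sum_{\mathrm{cyc}}\big(\nabla_X[Y,Z]-\nabla_{[Y,Z]}X\big) = \sum_{\mathrm{cyc}}\big([X,[Y,Z]]+\mathbb{T}(X,[Y,Z])\big).
\]
The Jacobi identity kills $\sum_{\mathrm{cyc}}[X,[Y,Z]]$, and substituting this back into the previous display leaves exactly (\ref{ide3}).

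Finally, for (\ref{ide4}) I would take $X,Y,Z$ horizontal. Expanding $Y$ and $Z$ in a unitary frame $\{Z_\alpha,Z_{\bar\alpha}\}$ and invoking (\ref{ide2}) together with its conjugate, every term of $\mathbb{T}(Y,Z)$ is a function multiple of $T$, say $\mathbb{T}(Y,Z)=fT$; since $\nabla^{p.h.}T=0$ by (\ref{Con_c}), one gets $\nabla_X(\mathbb{T}(Y,Z))=(Xf)\,T\in\mathbb{R}T$, and the same after cyclic permutation, so the whole sum $\sum_{\mathrm{cyc}}\nabla_X(\mathbb{T}(Y,Z))$ is $T$-valued. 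Dropping it from (\ref{ide3}) yields (\ref{ide4}). I do not anticipate a genuine obstacle: the argument is entirely formal, and the only care needed is the bookkeeping of the cyclic classes in the second step and keeping the torsion/curvature sign conventions of the Appendix fixed throughout.
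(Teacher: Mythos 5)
Your proposal is correct and is essentially the paper's argument: the paper expands the three iterated brackets $[X,[Y,Z]]$, $[Y,[Z,X]]$, $[Z,[X,Y]]$ via the definitions of $\nabla^{p.h.}$, $\mathbb{T}$ and $R^{p.h.}$ and invokes the Jacobi identity, which is exactly your computation read in the opposite direction, and your justification of (\ref{ide4}) (torsion of horizontal vectors lies in the span of $T$ by (\ref{ide2}), and $\nabla^{p.h.}T=0$ keeps the derivative terms there) is the same use of (\ref{ide2}) the paper appeals to.
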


\begin{proof}
The formula (\ref{ide4}) follows from (\ref{ide2}) and (\ref{ide3}). Now we
prove formula (\ref{ide3}). We have 
\begin{equation}  \label{ide5}
\begin{split}
[X,[Y,Z]]&=\nabla^{p.h.}_{X}[Y,Z]-\nabla^{p.h.}_{[Y,Z]}X-\mathbb{T}(X,[Y,Z])
\\
&=\nabla^{p.h.}_{X}\nabla^{p.h.}_{Y}Z-\nabla^{p.h.}_{X}\nabla^{p.h.}_{Z}Y-%
\nabla^{p.h.}_{X}\left(\mathbb{T}(Y,Z)\right) \\
&\ \ \ \ -\nabla^{p.h.}_{[Y,Z]}X-\mathbb{T}(X,[Y,Z]);
\end{split}%
\end{equation}
Similarly, we have 
\begin{equation}  \label{ide6}
\begin{split}
[Y,[Z,X]]&=\nabla^{p.h.}_{Y}\nabla^{p.h.}_{Z}X-\nabla^{p.h.}_{Y}%
\nabla^{p.h.}_{X}Z-\nabla^{p.h.}_{Y}\left(\mathbb{T}(Z,X)\right) \\
&\ \ \ \ -\nabla^{p.h.}_{[Z,X]}Y-\mathbb{T}(Y,[Z,X])
\end{split}%
\end{equation}
and 
\begin{equation}  \label{ide7}
\begin{split}
[Z,[X,Y]]&=\nabla^{p.h.}_{Z}\nabla^{p.h.}_{X}Y-\nabla^{p.h.}_{Z}%
\nabla^{p.h.}_{Y}X-\nabla^{p.h.}_{Z}\left(\mathbb{T}(X,Y)\right) \\
&\ \ \ \ -\nabla^{p.h.}_{[X,Y]}Z-\mathbb{T}(Z,[X,Y]).
\end{split}%
\end{equation}
Taking the sum of (\ref{ide5}), (\ref{ide6}) and (\ref{ide7}), we get 
\begin{equation*}
\begin{split}
0&=[X,[Y,Z]]+[Y,[Z,X]]+[Z,[X,Y]] \\
&=R^{p.h.}_{XY}(Z)+R^{p.h.}_{YZ}(X)+R^{p.h.}_{ZX}(Y) \\
&\ \ \ \ -\mathbb{T}(X,[Y,Z])-\mathbb{T}(Y,[Z,X])-\mathbb{T}(Z,[X,Y]) \\
&\ \ \ \ -\nabla^{p.h.}_{X}(\mathbb{T}(Y,Z))-\nabla^{p.h.}_{Y}(\mathbb{T}%
(Z,X))-\nabla^{p.h.}_{Z}(\mathbb{T}(X,Y)).
\end{split}%
\end{equation*}
This completes the proof of (\ref{ide3}).
\end{proof}


\end{document}